\newcommand{\Real}{\mathbb{R}}
\newcommand{\N}{\mathbb{N}}
\newcommand{\mc}{\mathcal}
\newcommand{\bx}{\mathbf{x}}
\newcommand{\bu}{\mathbf{u}}
\newcommand{\ba}{\mathbf{a}}
\newcommand{\bb}{\mathbf{b}}
\newcommand{\by}{\mathbf{y}}
\newcommand{\ta}{_\textup{a}}
\newcommand{\tb}{_\textup{b}}
\newcommand{\tc}{_\textup{c}}
\newcommand{\dz}{d_\textup{z}}
\newcommand{\dy}{d_\textup{y}}
\newcommand{\dv}{d_\textup{v}}
\newcommand{\dm}{d_\mu}
\newcommand{\dl}{d_\lambda}
\newcommand{\dn}{d_\nu}
\newcommand{\dxi}{d_\xi}
\newcommand{\ds}{d_\text{s}}
\newcommand{\dsj}{d_{\text{s},j}}
\newcommand{\al}[1]{\mathcal{L}_\text{aug} \left( #1, \rho \right)}
\newcommand{\QP}{_\textup{QP}}
\newcommand{\G}{_\textup{G}}
\newcommand{\Gj}{_{\textup{G},j}}
\newcommand{\NL}{_\textup{NL}}
\newcommand{\g}{\mathbbm{g}}
\newcommand{\h}{\mathbbm{h}}
\newcommand{\J}{\mathbbm{J}}
\newcommand{\ii}{^{(i)}}
\newcommand{\kk}{^{(k)}}
\newcommand{\mm}{^{(m)}}
\newcommand{\ir}{^{(i_l)}}
\newcommand{\ip}{^{(i+1)}}
\newcommand{\im}{^{(i-1)}}
\newcommand{\Lagr}{\mathcal{L}}
\newcommand{\Du}{\mathbf{d}_\text{u}}
\newcommand{\Dya}{\mathbf{d}_\text{y,a}}
\newcommand{\Dyb}{\mathbf{d}_\text{y,b}}
\newcommand{\Dyc}{d_\text{y,c}}
\newcommand{\bs}{\boldsymbol}
\newcommand{\Proja}[2]{\Pi_{#1} \left(#2\right)}
\newcommand{\txx}{\text{x}}
\newcommand{\txu}{\text{u}}
\newcommand{\txa}{\text{a}}
\newcommand{\C}{\mathcal{C}}
\newcommand{\Q}{\mathcal{Q}}
\newcommand{\R}{\mathbb{R}}
\newcommand{\norm}[1]{\left\| #1 \right\|_2}
\newtheorem{rem}{{\sc Remark}}
\crefname{rem}{remark}{Remark}
\numberwithin{rem}{section}
\newtheorem{lem}{{\sc Lemma}}
\crefname{lem}{lemma}{Lemma}
\numberwithin{lem}{section}
\newtheorem{assumption}{{\sc Assumption}}
\crefname{assumption}{assumption}{Assumption}
\newtheorem{standing}[assumption]{{\sc Standing Assumption}}
\crefname{standing}{Standing Assumption}{Standing Assumption}
\numberwithin{equation}{section}
\numberwithin{theorem}{section}
\numberwithin{assumption}{section}
\numberwithin{figure}{section}
\numberwithin{table}{section}
\newcommand{\TheTitle}{A Projected Gradient and Constraint Linearization Method for Nonlinear Model Predictive Control}
\newcommand{\TheTitleShort}{Projected Gradient \& Constraint Linearization Method for NMPC}
\newcommand{\TheAuthors}{G. Torrisi, S. Grammatico, R. S. Smith and M. Morari}
\headers{\TheTitleShort}{\TheAuthors}
\title{{\TheTitle}}
\author{
  Giampaolo Torrisi\thanks{Automatic Control Laboratory,
        ETH Zurich,  Switzerland. Emails: {\tt\small \{torrisig, rsmith, morari\}@control.ee.ethz.ch}.}
 \and Sergio Grammatico\thanks{Control Systems group, Department of Electrical Engineering, Eindhoven University of Technology, The Netherlands. Email: {\tt\small s.grammatico@tue.nl}. } \and Roy S. Smith\footnotemark[1] \and Manfred Morari\footnotemark[1]
}
\DeclareMathOperator{\diag}{diag}
\begin{document}

\maketitle

\begin{abstract}
Projected Gradient Descent denotes a class of iterative methods for solving optimization programs. Its applicability to convex optimization programs has gained significant popularity for its intuitive implementation that involves only simple algebraic operations. In fact, if the projection onto the feasible set is easy to compute, then the method has low complexity.
On the other hand, when the problem is nonconvex, e.g. because of nonlinear equality constraints, the projection becomes hard and thus impractical. In this paper, we propose a projected gradient method for Nonlinear Programs (NLPs) that only requires projections onto the linearization of the nonlinear constraints around the current iterate, similarly to Sequential Quadratic Programming (SQP). Although the projection is easier to compute, it makes the intermediate steps unfeasible for the original problem.
As a result, the gradient method does not fall either into the projected gradient descent approaches, because the projection is not performed onto the original nonlinear manifold, or into the standard SQP, since second-order information is not used. 
For nonlinear smooth optimization problems, we analyze the similarities of the proposed method with SQP and assess its local and global convergence to a Karush-Kuhn-Tucker (KKT) point of the original problem.
Further, we show that nonlinear Model Predictive Control (MPC) is a promising application of the proposed method, due to the sparsity of the resulting optimization problem. We illustrate the computational efficiency of the proposed method in a numerical example with box constraints on the control input and a quadratic terminal constraint on the state variable. \footnote{Preprint submitted to SIAM Journal on Control and Optimization.} \footnote{The work presented in this manuscript has similarities with \cite{Alg_preliminary_CDC2016}. The technical differences are explained in Section 1.} 
\end{abstract}

\begin{keywords}
Nonlinear Programming, First-order Methods, Sequential Quadratic Programming, Nonlinear Model Predictive Control.
\end{keywords}

\begin{AMS}
90C30, 90C55.
\end{AMS}

\section{Introduction}

The projected gradient method is an established approach for solving convex optimization problems. The subject has been extensively investigated over the last decades, developing algorithms that guarantee best performance for convex and strongly convex problems, see \cite{nesterov2013introductory, bertsekas1999nonlinear}. Recently, the Nesterov's accelerated gradient method has been applied to linear Model Predictive Control (MPC), and a priori worst-case bounds for finding a solution with prespecified accuracy has been derived
\cite{nesterov1983method, richterFastMPC}. 
When the optimization problem is a general nonlinear program, the gradient method can still be used for finding a KKT point \cite{rosen1961gradient2}. In particular, for general nonconvex constraints in the form of a nonlinear manifold, the projection onto the feasible set is performed in two stages. First, the projection is derived onto the tangent space to the nonlinear manifold, which in general is a polyhedron. Then, the determined point is projected again onto the original nonlinear manifold, via some strategy guaranteed to determine a feasible point that improves the objective function. While ensuring convergence, this second projection is in general computationally expensive, hence the method is not recommended in practice for solving nonlinear MPC problems. 

In this paper, we analyze a gradient method for Nonlinear Programs (NLPs) that only requires projections onto the tangent space, obtained by linearization of the nonlinear manifold around the current iterate. Note that this determines a sequence of points that are not necessarily feasible for the original NLP. 
Thus, standard projected gradient method results do not apply to prove convergence. 

Linearized constraints are instead considered in Sequential Quadratic Programming (SQP), which is an established method to determine a local solution to a smooth nonconvex NLP. The solution is determined via a sequence of iterates, each obtained as the solution to a Quadratic Program (QP), that are usually called the major iterations. 
In turn, each QP is solved via so-called minor iterations using available convex optimization methods \cite{nocedal1999numerical, Robinson, Boggs_survey}. Typically, each QP has as objective function a second-order approximation of the Lagrangian function of the nonlinear problem and as constraints the linearization of the nonlinear manifold, both computed at the current iterate. The solution of the QP updates the current iterate and then the next QP is formulated. 

The basic SQP method is equivalent to the Newton's method applied to the KKT conditions of the original nonlinear optimization problem, thus it is locally quadratically convergent \cite{tapia1974stable}. Since the required Hessian of the Lagrangian is expensive to compute and it is not guaranteed to be positive definite on every subspace far from the solution,  a suitable approximation of the Hessian of the Lagrangian is typically used, e.g. quasi-Newton or Broyden-Fletcher-Goldfarb-Shanno (BFGS) updates, that guarantee local superlinear convergence
\cite{broyden1973local, boggs1982linear, coleman1990characterizations, nocedal1980updating}. 
Global convergence is usually obtained via a line-search approach. Merit functions are considered that comprise both the objective and the constraint functions, e.g. in the form of an augmented Lagrangian \cite{fletcher1972, gill_SQP_theory, powell1986recursive}. Then by appropriate tuning of some penalty parameters, the solution to the QP is proven to be a descent direction for the merit function. A line-search then determines a step size for the convergence of the method. 
Several contributions in the literature have discussed different reformulations that trade off theoretical convergence guarantees and computational complexity, see the review in \Cref{sec:Iterative_Methods} \cite{Boggs_survey, powell1986recursive, han1976superlinearly, powell1978fast, gill_SQP_theory}. 
Indeed, commercial numerical solvers use this technique for approaching a KKT point of an NLP \cite{gill2005snopt}. 
 An alternative approach for establishing global convergence is the trust-region method, where additional constraints are included in the optimization program \cite{Boggs_survey, dennis1996numerical, Tenny2004}.

The proposed gradient algorithm in this paper can be seen as an incomplete SQP where, instead of solving each generated QP, only one  gradient step is computed for the QP and then projected onto the linearized constraint. 
Some literature has proposed solving the QP program inexactly, e.g. by bounding the suboptimality of the estimated solution to the QP to recover some rate of convergence for the SQP \cite{dembo1982inexact, leibfritz1999inexact, murray1995sequential}. However, our approach does not fall into this class of methods, 
as only one gradient step  for each QP is in general not enough for reaching the desired level of suboptimality. Moreover, for these approaches second-order information on the Lagrangian is necessary, which in contrast is not required for our proposed algorithm. 

An approach that is widely considered in the literature is the Real-Time Iteration (RTI), specifically oriented to MPC applications \cite{Diehl2002577}. 
 It   basically yields an approximation to the NLP solution based on  the SQP method. Instead of iterating the solution to the QPs until a KKT point is encountered, only one QP is solved. Further technical differences concern the computation of the Jacobian matrices -- the QPs are formed using the Jacobian matrices from the previous time step -- and how the initial state is embedded into the optimization problem. These refinements allow the next control input to be rapidly calculated before the linearization is found for the upcoming time step.  
  Suboptimality and closed-loop considerations of RTI are discussed in \cite{Diehl2005b}.


In this paper we show local and global convergence properties of the proposed gradient algorithm, leveraging established SQP results in the literature. Local conditions are derived when each gradient step is directly employed to update the current iterate. As in standard gradient method,  ensuring convergence of the algorithm requires that some conditions have to be set on the gradient step size, typically depending on second-order information of the considered problem -- in our case, the Lipschitz constant of the Lagrangian function. Under a particular assumption on the Hessian, the algorithm converges with linear rate, as expected for first-order methods for NLPs \cite{ghadimi2016accelerated}. This is guaranteed to work close to a local optimum only. For the practical use of the algorithm, global convergence is required instead. Since updating the current iterate with this gradient step might not guarantee convergence, a variable step size is considered. Analogously to SQP, a merit function in the form of an augmented Lagrangian function weights the optimality and unfeasibility of the iterates and is employed in the line search for determining the step size.

Finally, we notice that sparsity considerably reduces  the computational complexity of the problem. In particular, we show that nonlinear MPC is particularly well suited for applying the proposed method, due to the structure of the constraints generated by causal model dynamics. Similarly to the gradient method for linear MPC, easy-to-project constraints can be efficiently included in the nonlinear MPC formulation \cite{richterFastMPC}. 
Furthermore, in the presence of a quadratic terminal cost and constraints that ensure closed-loop stability \cite{morari1999model, Allgower_terminal, Kothare2000}, we show that the projection can be computed in closed form, making the proposed algorithm computationally efficient.

Preliminary results have been accepted for publication in \cite{Alg_preliminary_CDC2016}, where the global convergence of a similar algorithm employing only primal iterates is proven. In this work, instead, the combined use of primal and dual variable iterates and other technical improvements in the considered augmented Lagrangian function reduce considerably the resulting computational complexity. In \cite{Alg_preliminary_CDC2016} the effect of some heuristics is also analyzed, that yield an interesting speed-up in the computational time specifically for MPC problems. 

 


The remainder of this paper is organized as follows. Given the similarity of the proposed method to SQP, the standard SQP method is reviewed in Section $2$. Then, the  proposed algorithm is presented in Section $3$. In Section $4$ we show the convergence of the algorithm, and then in Section $5$ we discuss the practical implementation for general problems and nonlinear MPC. Section $6$ shows numerical experiments on a benchmark example.

\section{Iterative Methods for Nonlinear Optimization Problems}
\label{sec:Iterative_Methods}

We consider the equality constrained nonlinear optimization problem (NLP)
\begin{equation} \label{eq:original_problem}
\begin{split}
\min_{z \in \Real^n} \hspace{0.3cm} &  J(z)  \\
\text{s.t.} \hspace{0.35cm} 
&g(z) \leq 0 \\
& h(z) = 0, \\
\end{split}
\end{equation}
%
where the functions $J:\Real^n~\rightarrow~\Real$
, $g:\Real^n~\rightarrow~\Real^m$ 
and $h:\Real^n~\rightarrow~\Real^p$ are twice continuously differentiable functions, possibly nonconvex. 

Let us define the Lagrangian function of the NLP in \eqref{eq:original_problem} as
\begin{equation*}
\Lagr (z, \lambda, \nu) :=  J (z) +  g(z)^\top \lambda +  h (z)^\top \nu,
\end{equation*}
with Lagrange multiplier vector $\lambda\in \Real^m_{\geq 0}$
and $\nu \in \Real^p$.
 
We call $z^{\star}$ a critical point of \eqref{eq:original_problem} if it satisfies the first order conditions with strict complementarity \cite{Boyd2004_ConvexOpt}, i.e., there exist $\lambda^\star \in \Real^m_{\geq 0}$ and $\nu^\star \in \Real^p$ such that
\begin{equation}
\label{eq:FOC}
\begin{split} 
&\nabla \Lagr (z^\star,\lambda^\star,\nu^\star) = \nabla J (z^\star) + \nabla g (z^\star) \lambda^\star + \nabla h (z^\star) \nu^\star = 0,\\
&\diag(\lambda^\star) g(z^\star) = 0 \\
&\lambda^\star_j > 0 \text{ if } g_j (z^\star) = 0 \\
&h(z^\star) = 0. 
\end{split}
 \end{equation}
Let us assume that the NLP in \eqref{eq:original_problem} has a finite number of critical points. 
 
%

Iterative methods generate a sequence $\left( z\ii \right)_{i=1}^{\infty}$ (major iterations) to determine a critical point $z^\star$. In \Cref{sec:SQP_Method}, the state-of-the-art Sequential Quadratic Programming is reviewed, following the approach in \cite{gill_SQP_theory}. To generate each $z\ii$, a sequence of minor iterations is required, and these are the intermediate steps that involve solving a Quadratic Program. 
 In \Cref{sec:Prp_Method}, we propose an alternative method to determine a critical point $z^\star$ by  computing only major iterations of the problem. At the $i$-th major iteration, $z\ip$ is directly derived via a projected gradient step onto a linearization of the constraint around the current iterate $z\ii$.

\subsection{Sequential Quadratic Programming}
\label{sec:SQP_Method}

The Sequential Quadratic Programming (SQP) method updates the sequence $\left( z\ii \right)_{i}$ via the solution of a sequence of Quadratic Programs (QPs). In particular, given the current iterate $z\ii$, $i \in \mathbb{N}$,
the method generates the QP
 \begin{equation} \label{eq:QP_z}
 \begin{split}
 \dz\ii := \arg \min_{ \dz } \hspace{0.3cm} & \textstyle \frac12
   {\dz}^\top H\ii \dz +  \nabla J (z\ii)^\top \dz   \\  
   \text{s.t. } 
   \hspace{0.25cm} &g(z\ii) + \nabla g(z\ii)^\top \dz \leq 0  \\
 &h(z\ii) + \nabla h(z\ii)^\top \dz = 0,  \\
 \end{split}
 \end{equation}
 where $ H\ii$ is either the exact Hessian of the Lagrangian $\mathcal{L}$ of the NLP in \eqref{eq:original_problem}, or an appropriate approximation.
 The dual variables  $\lambda\QP\ii \in \R^m_{\geq 0}$ and $\nu\QP\ii \in \R^p$ are associated with the inequality and equality constraints, respectively. The resulting KKT conditions for the QP in \eqref{eq:QP_z} are:
 \begin{equation}
 \label{eq:KKT_QP_z}
 \begin{split}
& H\ii \dz\ii + \nabla J(z\ii) + \nabla g (z\ii) \ \lambda\QP\ii +  \nabla h(z\ii) \ \nu\QP\ii = 0
\\ &  \diag(\lambda\QP\ii) \left(g (z\ii) + \nabla g (z\ii)^\top \dz\ii \right) = 0 \\
&h(z\ii) + \nabla h(z\ii)^\top \dz\ii = 0.
 \end{split}
 \end{equation}
 Based on the solution $\dz\ii$ to \eqref{eq:QP_z}, the sequence is updated as
\begin{equation} \label{eq:update}
z\ip := z\ii + t\ii \dz\ii,
\end{equation} 
where $t\ii \in (0,1]$ is a step size to be determined.

To prove convergence results for the SQP methods, some regularity and boundedness assumptions  are typically considered \cite[Assumptions (i)-(iii)]{gill_SQP_theory}. Some of these assumptions will additionally hold throughout the paper and they will be denoted as standing assumptions. 

Consider a generic optimization problem, indicated in the form of \eqref{eq:original_problem}, a feasible solution $z$ and the set of the active inequality constraints as 
\begin{equation*}
\mc{I}\NL (z) := \Set{j\in \{ 1, \ldots, m \} | g_j(z) = 0}.
\end{equation*}
Then, the vector $z$ is said to be regular if the equality constraint gradients $\nabla h_i (z)$, for all $i \in 1, \ldots, p$, and the active inequality constraints $\nabla g_j (z)$, for all $j \in \mc{I}\NL (z)$,  are linearly independent.

 \smallskip
 \begin{assumption} 
 \label{ass:1}
The matrices $ \{ H\ii \}_{i=1}^{\infty}$ are positive definite, with bounded condition number, and smallest eigenvalue uniformly bounded away from zero, i.e., $\exists \gamma > 0$ such that, for all $i \in \N$,
$
d^\top H\ii d \geq \gamma \norm{d}^2 \ \ \forall d\in \Real^n.
$
\end{assumption}
 \smallskip
\begin{standing}
 \label{ass:2}
For all $i \in \N$, the QP in \eqref{eq:QP_z} is feasible.
\end{standing}
 \smallskip

 
  
\begin{assumption}
\label{ass:3}
For all $i \in \N$, let $\mathcal{I}\QP(\dz\ii, z\ii)$ denote the index set of the active inequality constraints  in \eqref{eq:QP_z} parametric in $z\ii$, i.e.,
\begin{equation*}
\mathcal{I}\QP(\dz\ii, z\ii) := \Set{j\in \{ 1, \ldots, m \} | g_j(z\ii) + \nabla g_j (z\ii)^\top \dz\ii = 0}.
\end{equation*}
Then $\dz\ii$ is regular, i.e., the matrix made up of $\nabla h(z\ii)$ along with the columns $\nabla g_j (z\ii)$, $j \in \mathcal{I}\QP(\dz\ii, z\ii)$, has full column rank. Further, strict complementarity holds.

 \end{assumption}
  \smallskip
  
Note that this implies that the dual variables $\lambda\QP\ii, \nu\QP\ii$ in \eqref{eq:KKT_QP_z} are bounded and unique.
%
\smallskip
\begin{standing}
\label{ass:4}
For all $i \in \N$, $z\ii, \, z\ii + \dz\ii \in \Omega \subset \R^n$. The set $\Omega$ is compact.
\end{standing}

\smallskip
\begin{standing}
\label{ass:5}
The functions  $J$, $g$, $h$, and their first and second derivatives are uniformly bounded in norm in $\Omega$.
\end{standing}
\smallskip

  Several possible choices for the Hessian $ H\ii$ have been considered in the literature. 
 By setting $ H\ii$ as the Hessian of the Lagrangian of \eqref{eq:original_problem} and unit step size, local convergence to the desired $z^\star$ is achieved with a quadratic rate \cite{goodman1985newton, tapia1974stable, tapia1978quasi}. 
 
 Other choices make the computation of $H\ii$ less expensive, but deteriorate the convergence speed. See   \cite{coleman1990characterizations} for a general overview of superlinear convergence theorems for SQP methods.

 


To ensure global convergence to a critical point, step sizes $t\ii$ different from $1$ are employed in SQP, together with a merit function in the form of an augmented Lagrangian:
\begin{multline} \label{eq:augmented_lagrangian}
\al{z,\lambda,\nu, s} := \textstyle \ J(z) + (g(z) + s)^\top \lambda +  h(z)^\top  \nu \\
+ \frac{\rho}{2}  \norm{g(z)+s}^2+ \frac{\rho}{2}  \norm{h(z)}^2, 
\end{multline}
where $\rho\geq 0$ is a  penalty parameter to be determined and $s \in \Real_{\geq 0}^m$ is a vector of slack variables, defined at the beginning of each iteration $i$ such that its $j$th component satisfies the following equation \cite[Equation (2.8)]{gill_SQP_theory}:
\begin{equation}
\label{eq:slack_s}
s\ii_j := \begin{cases}
\max \left\{ 0,-g_j(z\ii) \right\} & \text{ if } \rho = 0 \smallskip \\
\textstyle \max \left\{ 0, -g_j(z\ii) - \frac{\lambda_j}{\rho} \right\} & \text{ otherwise}.
\end{cases}
\end{equation}
When the parameter $\rho$ is nonzero, the vector $s$ as defined in \eqref{eq:slack_s} yields the value of $\Lagr_\text{aug}$ minimized with respect to the slack variables alone, i.e., $\partial \Lagr_\text{aug} / \partial s = 0$, subject to the non-negativity constraint $s\geq 0$.

Several possibilities for the design of the dual variables $\lambda$ and $\nu$ have been considered in the literature. In \cite{Boggs_survey, powell1986recursive} a least square estimate, function of $z$ and dependent on the  Jacobian matrices of the objective and constraints, is proposed and global convergence properties are proved, although this approach is computationally expensive. This is partially alleviated in \cite{han1976superlinearly, powell1978fast}, where the multipliers $\lambda\QP$ and $\nu\QP$ are used as constant estimates of the dual variables, thereby reducing the computational burden. 
%
%
On the other hand, this has the effect of redefining the merit function and leads to theoretical difficulties when proving global convergence.

The approach we follow in this paper builds upon that in \cite{gill_SQP_theory}. Therein, $\lambda$ and $\nu$ are considered as additional variables, updated with step size $t\ii$ along with the primal sequence $\left( z\ii\right)_i$. Specifically, in view of \cite{gill_SQP_theory}, we consider the iterative update
\begin{equation}
\label{eq:update_dual}
\left[ \begin{array}{c}
z\ip \\ \lambda \ip \\ \nu\ip \\ s\ip  
\end{array} \right] := 
\left[ \begin{array}{c}
z\ii \\ \lambda \ii \\ \nu\ii \\ s\ii  
\end{array} \right]
+ t\ii \left[ \begin{array}{c}
\dz\ii \\ \dl \ii \\ \dn\ii \\ \ds\ii 
\end{array} \right],
\end{equation}
where $\dz\ip$ is from \eqref{eq:QP_z}, $\dl\ii := \lambda\QP\ii - \lambda\ii$,  $\dn\ii := \nu\QP\ii - \nu\ii$ 
and the slack variation $\ds\ii$ satisfies
\begin{equation}
\label{eq:ds_def}
g(z\ii) + \nabla g (z\ii)^\top \dz\ii + s\ii + \ds\ii = 0.
\end{equation}

Then we define the function $\phi: \R \rightarrow \R$ as
\begin{equation}
\label{eq:def_phi}
\phi (t) := \al{z + t \dz,\lambda + t\dl,\nu + t\dn,s+t\ds}
\end{equation}
to determine the  step size $t$, e.g. via a backtracking line search starting from $t=1$, that satisfies the 
 Wolfe conditions \cite{dennis1996numerical, nocedal1992theory}:
\begin{subequations}
\label{eq:Wolfe_condition}
\begin{align}
&\textstyle \phi(t) - \phi(0) \leq  \ \sigma_1 \, t \, \phi'(0)  
\label{eq:Wolfe_condition_1}
\\
&\textstyle |\phi'(t)| \leq  -\sigma_2 \, \phi'(0)\ \textup{ or } \ \left( t=1 \textup{ and } \phi'(1) \leq - \sigma_2 \phi'(0) \right)  
\label{eq:Wolfe_condition_2}
\end{align}
\end{subequations} 
for some  $0 < \sigma_1 \leq \sigma_2 < \frac{1}{2}$. For ease of notation, let us avoid making explicit the dependence of $\phi$ on the arguments $z, \lambda, \nu, s$ of the augmented Lagrangian function $\mathcal{L}_{\textup{aug}}$.

Note that if the derivative $\phi'(0)$ is negative, then there exists a step size $t\ii \in (0,1]$ such that the conditions in \eqref{eq:Wolfe_condition} hold. 
The condition on the derivative $\phi'(0)$ is checked numerically at every iteration $i \in \N$ via the inequality condition
\begin{equation}
\label{eq:condition_eta_SQP}
\phi'(0) \leq -\frac12 (\dz\ii )^\top  H\ii \dz\ii.
\end{equation}
If this latter inequality does not hold true, then the parameter $\rho$ is adjusted. In particular, there exists a lower bound $ \underline{\rho} \in \R_{\geq 0}$ such that the inequality in \eqref{eq:condition_eta_SQP} holds for all $\rho \geq \underline{\rho}$  \cite[Lemma 4.3]{gill_SQP_theory}.

For the practical implementation of the algorithm, the line search in \eqref{eq:Wolfe_condition} is typically simplified in order to check only the first condition in \eqref{eq:Wolfe_condition_1} \cite{dennis1977quasi, nocedal1999numerical}. This has the effect of reducing the computational burden required to compute the derivative $\phi'(t)$ and it does not impede convergence of the algorithm in practice. To derive the step-size $t$, a backtracking line-search is employed with safeguarded polynomial interpolation \cite{lemarechal1981view}.
   
The SQP steps for the NLP in \eqref{eq:original_problem} are summarized in \Cref{alg:SQP}.

\begin{algorithm}[t]
\caption{Sequential Quadratic Programming}   
\label{alg:SQP}
    \begin{algorithmic} 
            \State \textsc{Initialize} $i \leftarrow 0$ and $z^{(0)} \in \R^n$
            \Repeat
				\State \textsc{Compute}  $\dz\ii$ as in \eqref{eq:QP_z} and $\lambda\QP\ii$,  $\nu\QP\ii$ such that \eqref{eq:KKT_QP_z} holds
                \If{ $\dz\ii = 0$}  
                \State
                 \textsc{Set} $z^\star = z\ii$,  $\lambda^\star = \lambda\QP\ii$, $\nu^\star = \nu\QP\ii$ and \textsc{Stop}
                 \Else
                 \smallskip
                 \If{ $i = 0$}
                 \State{\textsc{Set}  $\lambda^{(0)} = \lambda\QP^{(0)}$, $\nu^{(0)} = \nu\QP^{(0)}$}
                 \EndIf 
                 \smallskip
                 \State{\textsc{Set}  $\dl\ii = \lambda\QP\ii - \lambda\ii$, $\dn\ii = \nu\QP\ii - \nu\ii$}
                 \smallskip
                \EndIf				
                \smallskip
				
				\State \textsc{Determine} $s\ii$ and $\ds\ii$ from \eqref{eq:slack_s} and \eqref{eq:ds_def} 

\smallskip 
				
				\State \textsc{Set} $\rho \geq 0$ such that \eqref{eq:condition_eta_SQP} holds
				
				\smallskip
				\State \textsc{Determine} the step size $t\ii$ that satisfies \eqref{eq:Wolfe_condition}, 
				e.g. via line search 
				\smallskip
\State \textsc{Update} $z\ip, \lambda\ip, \nu\ip, s\ip$ as in \eqref{eq:update}
\smallskip
 \State $i \leftarrow i + 1$
 \smallskip
 \Until{\textit{Convergence}}
      \State \textbf{return} $z^\star$, $\lambda^\star$ and $\nu^\star$ 
    \end{algorithmic}
\end{algorithm}

\section{Proposed variant to Sequential Quadratic Programming}

\label{sec:Prp_Method}

The SQP method presented in Section \ref{sec:Iterative_Methods} requires the computation of the primal and dual optimal solutions to each of the QPs in \eqref{eq:QP_z}. Therefore, for all $i \in \N$, each QP has to be exactly solved to determine the updates $\dz\ii$, $\dl\ii$ and $\dn\ii$. 


In this paper, we propose determining $\dz\ii$ through one projected gradient step  onto the linearization of the constraints around $z\ii$, i.e. the feasible set of the QP in \eqref{eq:QP_z}. This is formalized by
\begin{equation} 
\label{eq:algorithm_dz}
\begin{split}
\dz\ii &:=  \Proja{\C\ii}{- \alpha\ii  \nabla J ( z\ii ) }, \\
\end{split}
\end{equation}
with bounded gradient step size $\alpha\ii \in \R_{>0}$, and $\Proja{\C\ii}{\cdot} : \R^n \rightarrow \C\ii \subseteq \R^n$ being the Euclidean projection onto the set
\begin{equation}
\label{eq:def_Ci}
\textstyle \C\ii := \Set{ \dz \in \Real^n | g(z\ii) + \nabla g (z\ii)^\top \dz \leq 0 , \,  h(z\ii) + \nabla h (z\ii)^\top \dz = 0 }.
\end{equation}
Note that the algorithm step in \eqref{eq:algorithm_dz} is equivalent to computing only one gradient step of the QP in \eqref{eq:QP_z} with null initialization $\dz^\text{ini}$:
$$
\dz\ii = \Proja{\C\ii}{ \underbrace{\dz^\text{ini}}_{=0} - \alpha\ii\left( \underbrace{H\ii \dz^\text{ini}}_{=0} +  \nabla J ( z\ii ) \right) } = \Proja{\C\ii}{ - \alpha\ii  \nabla J ( z\ii )  }.
$$
In Section \ref{sec:Proof}, more detail is given about the choice of $\alpha$ for the convergence of the algorithm.
%
Note that the projection in \eqref{eq:algorithm_dz} is equivalent to solving the following optimization problem.
\begin{equation}
\label{eq:proj_opt}
\begin{split}
\dz\ii = \arg \min_{\dz \in \R^n} \ \  &\frac{1}{2 \alpha\ii} \norm{\dz + \alpha\ii \nabla J(z\ii)}^2 \\ \text{s.t.} \ \ & g(z\ii) + \nabla g (z\ii)^\top \dz \leq 0 \\ & h(z\ii) + \nabla h (z\ii)^\top \dz = 0.
\end{split}
\end{equation}
Therefore, there exist dual multipliers $\lambda\G\ii \in \R^m_{\geq 0}$, $\nu\G\ii \in \R^p$ such that
\begin{equation}
\label{eq:dz_KKT}
\begin{split}
& \frac{1}{\alpha\ii} \dz\ii + \nabla J (z\ii) + \nabla g(z\ii) \lambda\G\ii + \nabla h(z\ii) \nu\G\ii  = 0 \\
& \diag(\lambda\G\ii) \left( g(z\ii) + \nabla g (z\ii)^\top \dz\ii \right) = 0 \\
&h(z\ii) + \nabla h(z\ii)^\top \dz\ii = 0. 
\end{split}
\end{equation}
The dual variables increments then are
\begin{equation}
\label{eq:dual_alg}
\begin{split}
\dl\ii &:= \lambda\G\ii -  \lambda\ii \\
\dn\ii &:= \nu\G\ii - \nu\ii,
\end{split}
\end{equation}
while we define the slack variable $s\ii$ and variation $\ds\ii$ as in the SQP method, i.e., from \eqref{eq:slack_s} and \eqref{eq:ds_def}, respectively.

Analogously to the SQP method reviewed in \Cref{sec:SQP_Method}, the augmented Lagrangian function in the form \eqref{eq:augmented_lagrangian} is considered. The dual variables are updated along with the primal variables according to the update equation in \eqref{eq:update_dual}, with step size $t\ii \in (0,1]$ defined such that  the conditions in \eqref{eq:Wolfe_condition} hold. 

We choose the penalty parameter $\rho \in \R_{\geq 0}$ such that the condition
\begin{equation}
\label{eq:condition_phi_PM}
\phi'(0) \leq -\frac{1}{2 \alpha\ii} \norm{\dz\ii}^2
\end{equation}
is satisfied with $\phi(\cdot)$ defined as in \eqref{eq:def_phi}. Later, in \Cref{lemma:tune_eta}, we provide a lower bound  $\hat \rho$ such that \eqref{eq:condition_phi_PM} holds for all $\rho \geq \hat \rho$.

Our proposed approach is summarized in Algorithm \ref{alg:nonlinear_algorithm}.

\begin{algorithm}[t]
\caption{Variant to SQP}   
\label{alg:nonlinear_algorithm}
    \begin{algorithmic} 
            \State \textsc{Initialize} $i \leftarrow 0$ and $z^{(0)} \in \R^n$
            \Repeat
            
				\State \textsc{Compute} $\dz\ii$ with step size $\alpha\ii$ as in \eqref{eq:algorithm_dz} 
				\smallskip
				
				\State \textsc{Determine} $\lambda\G\ii$, $\nu\G\ii$ such that \eqref{eq:dz_KKT} holds
				\smallskip
	
				\smallskip 
				                \If{ $\dz\ii = 0$}  
                \State
                \textsc{Set} $z^\star = z\ii$,  $\lambda^\star = \lambda\G\ii$, $\nu^\star = \nu\G\ii$ and \textsc{Stop}
                 \Else
                                  \If{ $i = 0$}
                 \State{\textsc{Set}  $\lambda^{(0)} = \lambda\G^{(0)}$, $\nu^{(0)} = \nu\G^{(0)}$}
                 \EndIf 
                 \smallskip
                 \State{\textsc{Set}  $\dl\ii = \lambda\G\ii - \lambda\ii$, $\dn\ii = \nu\G\ii - \nu\ii$}
                 \smallskip
                \EndIf	
\smallskip						
				\State \textsc{Determine} $s\ii$ and $\ds\ii$ from \eqref{eq:slack_s} and \eqref{eq:ds_def} 

\smallskip 
				
				\State \textsc{Set} $\rho \geq 0$ such that \eqref{eq:condition_phi_PM} holds
				\smallskip
				\State \textsc{Determine} the step size $t\ii$ that satisfies \eqref{eq:Wolfe_condition}, e.g. via line search 
				\smallskip
\State \textsc{Update} $z\ip, \lambda\ip, \nu\ip, s\ip$ as in \eqref{eq:update}
\smallskip
 \State $i \leftarrow i + 1$
 \smallskip
 \Until{\textit{Convergence}}
      \State \textbf{return} $z^\star$, $\lambda^\star$ and $\nu^\star$ 
    \end{algorithmic}
\end{algorithm}


\section{Proof of convergence of the proposed algorithm}
\label{sec:Proof}
In this section, we show the convergence properties of the proposed approach in Algorithm \ref{alg:nonlinear_algorithm}, under the standing assumptions of the SQP method in \Cref{sec:SQP_Method} and the following assumption that replaces \Cref{ass:3}.
\smallskip
\begin{assumption}
\label{ass:6}
For all $i \in \N$, and $z\ii$, let $\mathcal{I}\G(\dz\ii, z\ii)$ denote the index set of the active constraints in \eqref{eq:proj_opt}, i.e.,
\begin{equation} \label{eq:our-active-set}
\mathcal{I}\G(\dz\ii, z\ii) = \Set{j \in \{ 1, \ldots, m \} | g_j(z\ii) + \nabla g_j (z\ii)^\top \dz\ii = 0}.
\end{equation}
Then $\dz\ii$ is regular, i.e., the matrix made up of $\nabla h(z\ii)$ along with the columns $\nabla g_j (z\ii)$, ${\forall} j \in \mathcal{I}\G(\dz\ii, z\ii)$, has full column rank. Furthermore, strict complementarity holds. {\hfill $\square$}

 \end{assumption}
\smallskip
{Note that Assumption \ref{ass:6}}
implies that the dual variables $(\lambda\G\ii,\nu\G\ii )$ in \eqref{eq:dz_KKT} are bounded and unique.
{Also note} that the problem in \eqref{eq:proj_opt} has the same feasible set as \eqref{eq:QP_z}, thus by \Cref{ass:2}, the projection in \eqref{eq:algorithm_dz} is always feasible.

This section is organized as follows: in \Cref{sec:Proof_local}, we derive conditions for local linear convergence (setting $t=1$ in \eqref{eq:Wolfe_condition}) under additional assumptions on the Hessian at the critical point and on the {step size} $\alpha$. 
These conditions 	are not required
to prove the global convergence of the algorithm in \Cref{sec:Proof_global}, albeit for $t \leq 1$ the convergence can be theoretically slower {than linear}. We finally show that the linear convergence rate is not precluded close to the solution, since $t=1$ is admissible by the line search in \Cref{sec:t_1}.

\subsection{Local convergence}
\label{sec:Proof_local}

According to \cite{day1963recursive, Robinson}, we define the general recursive algorithm as a method to determine {a critical point for} the NLP in \eqref{eq:original_problem} via intermediate iterates $w\ii := \left( z\ii, \lambda\ii, \nu\ii \right)$, whose update $w\ip$ is determined as the KKT triple of a specific optimization problem $\text{P}(w\ii)$. Given the following generic optimization problem:
\begin{equation}
\label{eq:general_recursive_algorithm}
\begin{split}
\text{P}(w\ii) : \quad z\ip = \arg \min_{z} \ &\J(z,w\ii) \\
 					\text{s.t.}\ &\g(z,w\ii)\leq 0 \\
								&\h(z,w\ii) = 0,
\end{split}								
\end{equation}
the updates $\lambda\ip$ and $\nu\ip$ are the dual variables associated {with} the KKT conditions for P$(w\ii)$. 
As in \cite{Robinson}, and in line with the original NLP in \eqref{eq:original_problem}, 
we assume that the functions $\J$, $\g$ and $\h$ are twice continuously differentiable in their first argument.
Let us define a KKT triple as $w:=\left(z,\lambda,\nu\right)$ and the function
\begin{multline}
\label{eq:define_U}
\mathcal{U} \left( w, \, w\ii  \right) := \left[ \nabla_{\!w} \, \J(z,w\ii) + \lambda^\top \nabla_{\!w} \, \g (z,w\ii) + \nu^\top \nabla_{\!w} \, \h(z,w\ii); \right. \\ \left. \lambda_1 \g_1 (z,w\ii) ; \ldots ;  \lambda_m \g_m (z,w\ii); \h_1 (z,w\ii); \ldots ; \h_p (z,w\ii)  \right].
\end{multline}

The Sequential Quadratic Programming methods and the proposed algorithm can be recast as general recursive algorithms of the form in \eqref{eq:general_recursive_algorithm}. Let us first consider the SQP in \Cref{alg:SQP}. A local version of this algorithm, which is not guaranteed to converge for any initialization $z^{(0)}$, takes step $t\ii = 1$ for all $i \in \N$.


It follows from \eqref{eq:QP_z} that the update $w\ip = (z\ip, \lambda\ip, \nu\ip)$ associated with \eqref{eq:general_recursive_algorithm} is the KKT triple of the problem $\text{P}\QP(w\ii)$:
\begin{equation} \label{eq:P-QP}
\begin{split}
\text{P}\QP(w\ii) : \quad z\ip = \arg \min_{z} \ & \frac12  (z - z\ii)^\top H\ii (z - z\ii) + \nabla J(z\ii)^\top (z - z\ii) \\
 					\text{s.t. }\ &g(z\ii) + \nabla g(z\ii)^\top (z - z\ii) \leq 0 \\
								&h(z\ii) + \nabla h(z\ii)^\top (z - z\ii) = 0.
\end{split}	
\end{equation}
In fact, since $t\ii = 1$, by \eqref{eq:update_dual} and \eqref{eq:dual_alg}, the update of the dual variables is given by  
$\left( \lambda\ip, \, \nu\ip \right)=(\lambda\QP\ii, \, \nu\QP\ii )$.

Analogously, if we fix $t\ii = 1$ for all $i \in \N$, it follows from \eqref{eq:proj_opt} that the proposed algorithm determines the update $w\ip$ as the KKT triple of {the problem} $\text{P}\G(w\ii)$:
\begin{equation} \label{eq:P-G}
\begin{split}
\text{P}\G(w\ii) : \quad z\ip = \arg \min_{z} \ & \frac{1}{2 \alpha\ii}  (z - z\ii)^\top (z - z\ii) + \nabla J(z\ii)^\top (z - z\ii) \\
 					\text{s.t. }\ &g(z\ii) + \nabla g(z\ii)^\top (z - z\ii) \leq 0 \\
								&h(z\ii) + \nabla h(z\ii)^\top (z - z\ii) = 0.
\end{split}	
\end{equation}
Again, by \eqref{eq:update_dual} and \eqref{eq:dual_alg}, the update of the dual variables is 
$\left( \lambda\ip, \, \nu\ip \right) = ( \lambda\G\ii, \, \nu\G\ii)$.

The following two results establish some basic properties of the general recursive algorithm in \eqref{eq:general_recursive_algorithm} that will be necessary to establish local and global convergence of the proposed algorithm. 



\begin{lem}[{\cite[Theorem 2.1]{Robinson}}]
\label{th:gen_rec_alg}
Let $\bar w~\in~\R^{n + m + p}$, and suppose that $\left( \bar{z}, \bar \lambda, \bar \nu \right) \in \R^n \times \R^m \times \R^p$ is a KKT triple of P$(\bar w)$ from \eqref{eq:general_recursive_algorithm}, at which the first order conditions with strict complementarity slackness hold.

Then, there exist open neighborhoods $W = W(\bar{w})$ and $V = V(\bar{z}, \bar \lambda, \bar \nu)$, and a continuous function $Z : W \rightarrow V$, such that: $Z(\bar w) = (\bar z, \bar \lambda, \bar \nu)$, for all $w \in W$, $Z(w)$ is the unique KKT triple in $V$ of P$(w)$ and the unique zero in $V$ of the function $\mathcal{U}\left((\cdot), \, w \right)$ in \eqref{eq:define_U}. 
Furthermore, if $Z(w) =: (z(w),\lambda(w),\nu(w))$, then for each $w \in W$, $z(w)$ is a critical point of P$(w)$ at which the first order KKT conditions are satisfied with strict complementarity slackness and linear independence of the gradients to the active constraints.  
{\hfill $\square$}
\end{lem}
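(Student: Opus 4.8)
The plan is to prove the lemma as an implicit function theorem statement for the parametric KKT system. I regard $\mathcal{U}(\cdot,w)=0$ as an equation in the unknown triple $w'=(z,\lambda,\nu)$ with $w$ a parameter. First I would use strict complementarity at the base point $(\bar z,\bar\lambda,\bar\nu)$ to split $\{1,\dots,m\}$ into the active set $\mathcal{A}:=\{j:\g_j(\bar z,\bar w)=0\}$, on which $\bar\lambda_j>0$, and its complement $\mathcal{I}$, on which $\g_j(\bar z,\bar w)<0$ and $\bar\lambda_j=0$. Anticipating that the active set stays constant nearby, I would freeze $\lambda_j\equiv 0$ for $j\in\mathcal{I}$ and pass to the \emph{reduced} system in the variables $(z,\lambda_{\mathcal{A}},\nu)$ consisting of the stationarity block of $\mathcal{U}$, the equalities $\h(z,w)=0$, and $\g_j(z,w)=0$ for $j\in\mathcal{A}$. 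This is a square nonlinear system, continuously differentiable in $z$ by the assumed twice continuous differentiability of $\J,\g,\h$ in their first argument, and jointly continuous in $(z,\lambda_{\mathcal{A}},\nu,w)$.

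The crux is nonsingularity of the Jacobian of this reduced system with respect to $(z,\lambda_{\mathcal{A}},\nu)$ at the base point. This is the classical nonsingularity of the KKT matrix — the Hessian (in $z$) of the Lagrangian in the diagonal block, and the Jacobian (in $z$) of $\h$ together with the $\g_j$, $j\in\mathcal{A}$, in the off-diagonal blocks — which holds whenever that constraint Jacobian has full row rank (regularity/LICQ) and the Hessian of the Lagrangian is positive definite on its kernel (a second-order sufficiency condition). Both ingredients are present in the settings where this lemma is applied: for $\text{P}\QP$ the Hessian of the objective is $H\ii\succ 0$ by \cref{ass:1}, for $\text{P}\G$ it is $\tfrac{1}{\alpha\ii}I\succ 0$, so the second-order condition holds everywhere, and the rank condition is exactly \cref{ass:3}, respectively \cref{ass:6}. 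With the Jacobian nonsingular, the implicit function theorem gives a neighborhood $W$ of $\bar w$, a neighborhood of $(\bar z,\bar\lambda_{\mathcal{A}},\bar\nu)$, and a continuous map $w\mapsto(z(w),\lambda_{\mathcal{A}}(w),\nu(w))$ solving the reduced system uniquely there, with value $(\bar z,\bar\lambda_{\mathcal{A}},\bar\nu)$ at $w=\bar w$.

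Finally I would recover the full statement. Extending by $\lambda_j(w):=0$ for $j\in\mathcal{I}$ gives $Z(w):=(z(w),\lambda(w),\nu(w))$; shrinking $W$ and invoking continuity, for all $w\in W$ one gets $\g_j(z(w),w)<0$ for $j\in\mathcal{I}$ (since $\g_j(\bar z,\bar w)<0$), $\lambda_j(w)>0$ for $j\in\mathcal{A}$ (since $\bar\lambda_j>0$), and persistence of the full row rank of the active constraint Jacobian (an open condition). Hence $\mathcal{U}(Z(w),w)=0$ holds together with $\lambda(w)\ge 0$ and $\g(z(w),w)\le 0$, so $z(w)$ is a critical point of $P(w)$ in the sense of \eqref{eq:FOC}, with strict complementarity and linear independence of the active constraint gradients. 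For uniqueness, choose $V$ a neighborhood of $(\bar z,\bar\lambda,\bar\nu)$ small enough that every $(z,\lambda,\nu)\in V$ has $\lambda_j>0$ for $j\in\mathcal{A}$ and $\g_j(z,w)<0$ for $j\in\mathcal{I}$: then the complementarity rows of $\mathcal{U}$ force any zero of $\mathcal{U}(\cdot,w)$ in $V$ — and likewise any KKT triple of $P(w)$ in $V$ — to satisfy the reduced system, hence to coincide with $Z(w)$; intersecting $V$ with the image neighborhood of the implicit function makes $Z:W\to V$. I expect the main obstacle to be establishing the nonsingularity of the reduced Jacobian, i.e., identifying precisely which second-order and regularity information is in force and why it is available here; the remaining active-set bookkeeping and continuity/openness arguments are routine once that is settled.
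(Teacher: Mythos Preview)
Your approach is correct and is essentially the classical argument behind Robinson's Theorem~2.1, which the paper cites without reproducing: the paper gives no proof of this lemma beyond the citation and the brief remark that the active set is preserved. Your reduction to the active-set square system and application of the implicit function theorem, with nonsingularity of the reduced KKT Jacobian supplied by LICQ plus a second-order condition, is exactly the standard route; Robinson works with the full map $\mathcal{U}$ directly, but under strict complementarity the Jacobian of $\mathcal{U}$ is nonsingular precisely when your reduced Jacobian is, so the two arguments coincide.

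One remark on hypotheses: the lemma as stated in the paper lists only first-order conditions with strict complementarity, which by themselves do not give nonsingularity of the KKT matrix. You correctly identify that LICQ and a second-order sufficiency condition are what actually drive the implicit function step, and you correctly locate them in the concrete applications ($H\ii\succ 0$ for $\text{P}\QP$, $\tfrac{1}{\alpha\ii}I\succ 0$ for $\text{P}\G$, together with \Cref{ass:3}/\Cref{ass:6}). Robinson's original statement includes these hypotheses; the paper has simply abbreviated them.
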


Specifically, the same inequality constraints active at $z(\bar w)$ will be active at $z(w)$, which is in accordance with \cite[Proof of Theorem 2.1]{Robinson}. However, since we assume \textit{first order} conditions with slack complementarity at $z(\bar w)$, then at $z(w)$ the \textit{first order} conditions will hold with slack complementarity.

As in \cite{Boggs_survey}, for the analysis of local convergence we assume that the correct active set at $z^\star$ is known. This is justified by  \Cref{th:gen_rec_alg}, since the proposed algorithm will eventually identify the active inequality constraint for \eqref{eq:original_problem}. Therefore, we define as $h_\txa(z) = [g_\txa (z) \, ; \, h (z) ]$ the set of the active inequality and equality constraints at $z^\star$, and indicate with $\xi :=  [\lambda_\txa \, ; \, \nu ]$ the corresponding dual variables. 

The following result establishes the local convergence properties of the algorithm and apply SQP arguments to establish linear convergence to a critical point. In fact, the problem in \eqref{eq:P-G} can be seen as an SQP program with Hessian $\frac{1}{\alpha} I \succ 0$.

\begin{theorem}
\label{th:local_conv}
Assume that $\left(z^\star,\xi^\star\right)$ is a critical point such that $\nabla^2 \Lagr(z^\star,\xi^\star)$ is positive definite, and let the initialization $z^{(0)}$ be close enough to $z^\star$. Then, there exist positive step sizes 
$(\alpha\ii)_i$ such that the sequence $\left(z\ii\right)
_i$ {defined as in \eqref{eq:P-G} converges to $z^\star$ with linear rate.}
{\hfill $\square$}
\end{theorem}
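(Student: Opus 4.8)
The plan is to realize Algorithm~\ref{alg:nonlinear_algorithm} with unit step size $t\ii=1$ as a special instance of the general recursive algorithm in \eqref{eq:general_recursive_algorithm}, namely the one generated by the family of problems $\text{P}\G(w\ii)$ in \eqref{eq:P-G}, and then import the local convergence machinery available for such recursions (in the spirit of \cite{Robinson, Boggs_survey}). First I would apply \Cref{th:gen_rec_alg} at $\bar w = w^\star := (z^\star,\xi^\star)$: since $(z^\star,\xi^\star)$ is a critical point of \eqref{eq:original_problem} with strict complementarity, and the linearized problem $\text{P}\G(w^\star)$ has $z^\star$ as its (unique, by \Cref{ass:6}) KKT point with the \emph{same} active set, the lemma furnishes neighborhoods $W$, $V$ and a continuous solution map $Z$ with $Z(w^\star)=w^\star$. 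Consequently, once $z^{(0)}$ (hence $w^{(0)}$) lies in $W$, the whole iteration stays well-defined and the correct active set is identified, so that we may pass to the reduced, equality-only description using $h_\txa$ and $\xi$.

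Next I would linearize the iteration map around the fixed point. On the identified active set the update $w\ip = G(w\ii)$ is, by \eqref{eq:dz_KKT}, the solution of the linear KKT system
\begin{equation*}
\begin{bmatrix} \tfrac{1}{\alpha\ii} I & \nabla h_\txa(z\ii) \\ \nabla h_\txa(z\ii)^\top & 0 \end{bmatrix}
\begin{bmatrix} z\ip - z\ii \\ \xi\ip \end{bmatrix}
= \begin{bmatrix} -\nabla J(z\ii) \\ -h_\txa(z\ii) \end{bmatrix}.
\end{equation*}
Differentiating the fixed-point relation (or, equivalently, invoking the implicit-function statement of \Cref{th:gen_rec_alg} together with the standard SQP computation, e.g.\ \cite{tapia1974stable, Boggs_survey}) shows that the Jacobian of $z\mapsto z(G(z))$ at $z^\star$, restricted to the tangent subspace $\mathcal T := \ker \nabla h_\txa(z^\star)^\top$, equals $I - \alpha\, P\, \nabla^2\Lagr(z^\star,\xi^\star)\, P$, where $P$ is the orthogonal projector onto $\mathcal T$; the component normal to $\mathcal T$ is contracted to zero in one step because the linearized equality constraints are satisfied exactly. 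Hence the asymptotic contraction factor is the spectral radius of $I - \alpha\, P\nabla^2\Lagr(z^\star,\xi^\star)P$ acting on $\mathcal T$.

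I would then choose $\alpha\ii \equiv \alpha$ constant, with $0 < \alpha < 2/L$ where $L$ is an upper bound on $\|\nabla^2\Lagr\|$ on $\Omega$ (finite by Standing Assumptions~\ref{ass:4}--\ref{ass:5}); since $\nabla^2\Lagr(z^\star,\xi^\star)\succ 0$ by hypothesis, its restriction to $\mathcal T$ has eigenvalues in some $[\mu, L]$ with $\mu>0$, so $\rho\big(I-\alpha P\nabla^2\Lagr(z^\star,\xi^\star)P\big|_{\mathcal T}\big) \leq \max\{|1-\alpha\mu|, |1-\alpha L|\} =: \kappa < 1$. A standard argument then closes the proof: pick $\kappa < \hat\kappa < 1$; by continuity of $z\mapsto \nabla^2\Lagr$, of $\nabla h_\txa$, and of the solution map, there is a ball $B(z^\star,\delta)\subseteq W$ on which the one-step map is Lipschitz with constant $\hat\kappa$ in a suitable norm; shrinking $\delta$ if needed so that $B(z^\star,\delta)$ is forward invariant, any $z^{(0)}\in B(z^\star,\delta)$ yields $\|z\ii - z^\star\| \leq \hat\kappa^{\,i}\|z^{(0)}-z^\star\|$, i.e.\ linear convergence, and $\xi\ii\to\xi^\star$ as well by continuity of $Z$.

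The main obstacle I anticipate is the Jacobian computation of step two: one must carefully separate the tangential and normal directions, show that the equality-constraint residual $h_\txa(z\ii)$ is quadratically small and therefore does not spoil the first-order contraction estimate, and verify that strict complementarity plus \Cref{ass:6} really do freeze the active set in the whole neighborhood (this is exactly where \Cref{th:gen_rec_alg} is doing the heavy lifting). A secondary subtlety is that the contraction holds in a metric adapted to the splitting $\R^n = \mathcal T \oplus \mathcal T^\perp$ rather than in the raw Euclidean norm, so some care is needed to state the final linear rate cleanly; however, norm equivalence on finite-dimensional spaces makes this only a cosmetic issue.
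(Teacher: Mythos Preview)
Your proposal is correct and is essentially the paper's own argument: both reduce to the equality-only problem on the identified active set (via \Cref{th:gen_rec_alg}), linearize the one-step map at $z^\star$ to obtain the projected operator $T(z^\star)\bigl(I-\alpha\,\nabla^2\Lagr(z^\star,\xi^\star)\bigr)$ (which on $\mathcal T$ coincides with your $I-\alpha P\nabla^2\Lagr P$), and pick $\alpha$ so that this is a contraction. The only difference is stylistic: the paper obtains the error recursion by explicitly Taylor-expanding the closed-form expressions for $\xi\G(z)$ and $\dz$ (following \cite{boggs1982linear}) and then bounds $\norm{z^+-z^\star}$ directly via $\norm{T(z^\star)v}\le\norm{v}$, whereas you linearize the KKT system more abstractly and split into tangential and normal components.
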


\smallskip 

\begin{proof}
The proof is similar to \cite[Proof of Theorem 3.3]{boggs1982linear}, albeit the result obtained is different. 
For ease of notation, we use no superscript for the iteration $(i)$ and the superscript ``$+$'' for the iteration $(i+1)$.

Then the proposed algorithm has the following update when $t=1$:
\begin{equation*}
\begin{split}
\xi^{+} &= \xi + \dxi = \xi + \left( \xi\G (z) - \xi \right) = \xi\G (z) =  \\
&= \left( \alpha \nabla h_\txa (z)^\top \nabla h_\txa (z) \right)^{-1} \left( h_\txa(z) - \alpha \nabla h_\txa (z) \nabla J(z)  \right). 
\end{split}
\end{equation*}
From \eqref{eq:FOC}, {we have the} the optimal dual variable 
\begin{equation*}
\begin{split}
\xi^{\star} &= - \left( \nabla h_\txa (z^{\star})^\top \, A \, \nabla h_\txa (z^{\star}) \right)^{-1} \left( \nabla h_\txa (z^{\star}) \, A \, \nabla J(z^{\star})  \right),  
\end{split}
\end{equation*}
where $A$ is any nonsingular matrix that is positive definite on the null space of $\nabla h_\txa (z^{\star})^\top$. 
{In particular, with} $A = \alpha I$ {we obtain}
\begin{equation}
\label{eq:xi_minus_xistar}
\begin{split}
&\xi^{+} - \xi^\star = \xi\G (z) - \xi\G (z^\star) = \nabla \xi\G (z^\star) (z - z^\star) + \mc{O}\left(\norm{z - z^\star}^2 \right) \\
&= \! \left( \alpha \nabla h_\txa (z^{\star})^\top \nabla h_\txa (z^{\star}) \right)^{-1}\! \alpha \nabla h_\txa \! (z^{\star})^\top  \left(\frac{1}{\alpha} I - \nabla^2 \Lagr (z^\star, \xi^\star ) \right)  (z - z^\star) \! + \! \mc{O}\left(\norm{z - z^\star}^2 \right)\!. 
\end{split} 
\end{equation}
From \eqref{eq:dz_KKT}, {we have} the update $
\dz = - \alpha \nabla \Lagr (z, \xi\G (z) )
$, therefore
\begin{equation}
\label{eq:z_minus_zstar}
\begin{split}
z^{+} - z^\star &= z + \dz - z^\star = z - z^\star - \alpha (\nabla \Lagr (z, \xi\G (z)) - \underbrace{\nabla \Lagr (z^\star, \xi^\star)}_{=0} ) \\
&= z - z^\star  - \alpha \left( \nabla^2 \Lagr (z^\star,\xi^\star) (z - z^\star) + \nabla h_\txa(z^\star) ( \xi\G(z) - \xi^{\star} )  \right) + \mc{O}\left(\norm{z - z^\star}^2\right) \\
&=\! \alpha \left( \! \left(\frac{1}{\alpha} I  -  \nabla^2 \Lagr (z^\star,\xi^\star) \right) \! (z - z^\star) - \! \nabla h_\txa(z^\star) ( \xi\G(z) - \xi^{\star} )  \right) \! + \mc{O}\left(\norm{z - z^\star}^2\right) \! . \\
\end{split}
\end{equation}

{Now,} by substituting \eqref{eq:xi_minus_xistar} into \eqref{eq:z_minus_zstar}
\begin{equation*}
\begin{split}
z^{+} - z^\star
&= \alpha \left(  \left(\frac{1}{\alpha} I  -  \nabla^2 \Lagr (z^\star,\xi^\star) \right) (z - z^\star) - \nabla h_\txa(z^\star)  \left( \alpha \nabla h_\txa (z^{\star})^\top \nabla h_\txa (z^{\star}) \right)^{-1} \cdot \right. \\ & \quad \left. \cdot \, \alpha \nabla h_\txa (z^{\star})^\top  \left(\frac{1}{\alpha} I - \nabla^2 \Lagr (z^\star, \xi^\star ) \right)  (z - z^\star)  \right) + \mc{O}\left(\norm{z - z^\star}^2\right)  \\
&= \alpha T(z^\star) \left(\frac{1}{\alpha} I  -  \nabla^2 \Lagr (z^\star,\xi^\star) \right) (z - z^\star) + \mc{O}\left(\norm{z - z^\star}^2\right) \\
&=  T(z^\star) \left(I  - \alpha \nabla^2 \Lagr (z^\star,\xi^\star) \right) (z - z^\star) + \mc{O}\left(\norm{z - z^\star}^2\right),
\end{split}
\end{equation*}
with $T(z^\star) := I -  \nabla h_\txa (z^\star) \left( \nabla h_\txa (z^\star)^\top \nabla h_\txa (z^\star) \right)^{-1} \nabla h_\txa (z^\star)^\top$ being the orthogonal projector onto the tangent space to the constraints $h_a(z^\star)$ at $z^\star$.

{Then by considering the norms of the above quantities we conclude that}
\begin{equation*}
\begin{split}
\norm{z^{+} - z^\star} & \leq \norm{T(z^\star) \left(I  - \alpha \nabla^2 \Lagr (z^\star,\xi^\star) \right) (z - z^\star)} + \gamma \norm{z - z^\star}^2 \\
& \leq \big( \underbrace{\norm{I  - \alpha \nabla^2 \Lagr (z^\star,\xi^\star)}  + \gamma \norm{z - z^\star}}_{=:\eta} \big) \norm{z - z^\star},
\end{split}
\end{equation*}
for all sufficiently large iterations and some $\gamma>0$, independent of the iteration. {Here we have used} the property of the orthogonal projector $\norm{T(z^\star) \, v} \leq \norm{v}$ for any vector $v$. 
Since $\nabla^2 \Lagr (z^\star,\xi^\star) \succ 0$, 
by choosing $\alpha \leq \frac{1}{\max \text{eig} \, \nabla^2 \Lagr (z^\star,\xi^\star)}$, the term $\norm{I  - \alpha \nabla^2 \Lagr (z^\star,\xi^\star)}$ can be made strictly smaller than $1$. Thus, for a {sufficiently} small initialization distance $\norm{z^{(0)} - z^\star}$,  {we have} $\eta < 1$, and the sequence $\left(z\ii\right)_i$ converges {at a linear rate due to the contraction mapping theorem}.  
\end{proof}

\smallskip

\subsection{Global convergence}
\label{sec:Proof_global}
Before showing the convergence result of the paper, some technical lemmas are presented that show the properties of the proposed algorithm. In particular, the following Lemma ensures that the desired algorithm determines the correct active set at a critical point of \eqref{eq:original_problem}.
\begin{lem}
\label{lemma:active_set}
The following properties hold for \Cref{alg:nonlinear_algorithm}:
\begin{enumerate}
\item[\textup{(i)}]  $\textstyle\norm{\dz\ii} = 0$ if and only if $z\ii$ is a critical point for \eqref{eq:original_problem};
\item[\textup{(ii)}] there exists $\bar \epsilon \in \R_{>0}$ such that: if $\norm{\dz}\leq \bar{\epsilon}$, then the active set $\mathcal{I}\G$ in \eqref{eq:our-active-set} of \eqref{eq:proj_opt} coincides with the set of constraints that are active at a critical point $z^\star$ for \eqref{eq:original_problem}.
\end{enumerate}
{\hfill $\square$}
\end{lem}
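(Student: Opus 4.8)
The plan is to prove the two statements separately, exploiting the KKT system \eqref{eq:dz_KKT} of the projection subproblem \eqref{eq:proj_opt} and the continuity/stability machinery provided by \Cref{th:gen_rec_alg}. For part (i), I would argue both implications by comparing \eqref{eq:dz_KKT} with the first-order conditions \eqref{eq:FOC} of the original NLP. If $\dz\ii = 0$, then \eqref{eq:dz_KKT} reduces exactly to
\begin{equation*}
\nabla J(z\ii) + \nabla g(z\ii)\lambda\G\ii + \nabla h(z\ii)\nu\G\ii = 0,\quad \diag(\lambda\G\ii) g(z\ii) = 0,\quad h(z\ii)=0,
\end{equation*}
which, together with $\lambda\G\ii \geq 0$ and strict complementarity from \Cref{ass:6}, is precisely \eqref{eq:FOC} with $(\lambda^\star,\nu^\star) = (\lambda\G\ii,\nu\G\ii)$; hence $z\ii$ is a critical point. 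Conversely, if $z\ii = z^\star$ is a critical point, then $\dz = 0$ is feasible for \eqref{eq:proj_opt} (since $g(z^\star)\leq 0$, $h(z^\star)=0$), and plugging $\dz = 0$ with the multipliers $(\lambda^\star,\nu^\star)$ into \eqref{eq:dz_KKT} satisfies the whole system; by uniqueness of the solution of the strongly convex problem \eqref{eq:proj_opt}, $\dz\ii = 0$ is the projection. This direction needs only that \eqref{eq:proj_opt} is strongly convex (its Hessian is $\frac{1}{\alpha\ii}I\succ 0$) so the KKT point is the unique global minimizer.

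For part (ii), I would invoke \Cref{th:gen_rec_alg} applied to the recursive problem $\text{P}\G(\cdot)$ of \eqref{eq:P-G}: at a critical point $z^\star$ (with its KKT triple $\bar w = (z^\star,\lambda^\star,\nu^\star)$), the map $Z(\cdot)$ is continuous on a neighborhood $W$ of $\bar w$, and on $W$ the active inequality set of $\text{P}\G(w)$ coincides with that active at $z(\bar w)$ — which, by part (i) and the reduction above, is exactly $\mathcal{I}\NL(z^\star)$. The remaining task is to translate ``$w\ii$ near $\bar w$'' into ``$\norm{\dz\ii}\leq\bar\epsilon$''. By \eqref{eq:dz_KKT}, $\dz\ii$ is a continuous function of $z\ii$ (and of $\alpha\ii$, which is bounded away from $0$ and $\infty$), and $\dz\ii = 0$ exactly when $z\ii$ is critical; since by assumption \eqref{eq:original_problem} has finitely many critical points, for $\norm{\dz\ii}$ small enough $z\ii$ must lie in a small ball around one of them. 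The one genuinely delicate point — the one I expect to be the main obstacle — is ruling out, uniformly, that a small $\norm{\dz\ii}$ could be produced by a $z\ii$ that is far from \emph{every} critical point: this requires a compactness argument on $\Omega$ (\Cref{ass:4}) together with the continuity and the ``$\dz\ii = 0 \iff$ critical'' equivalence of part (i), so that $z \mapsto \norm{\dz(z)}$ is bounded below by a positive constant on the (compact) complement of the union of small neighborhoods of the critical points.

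Concretely, I would set it up as follows: fix a critical point $z^\star$ and the neighborhoods $W, V$ from \Cref{th:gen_rec_alg}; choose $\delta>0$ so that $\norm{z - z^\star}\leq\delta$ implies $(z,\lambda\G(z),\nu\G(z))\in W$ (using continuity of the multiplier map from \Cref{ass:6}); on the complement of $\bigcup_{z^\star}\{\norm{z-z^\star}<\delta\}$ within $\Omega$, which is compact, the continuous function $\norm{\dz(z)}$ attains a positive minimum $\mu > 0$ by part (i); then take $\bar\epsilon := \mu/2$. For any iterate with $\norm{\dz\ii}\leq\bar\epsilon$ we get $\norm{z\ii - z^\star} < \delta$ for some critical $z^\star$, hence $w\ii\in W$, hence $\mathcal{I}\G(\dz\ii,z\ii)$ equals the active set at $z^\star$, which is $\mathcal{I}\NL(z^\star)$. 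This completes the argument; the only subtlety beyond bookkeeping is the uniformity in the choice of $\bar\epsilon$ across the finitely many critical points, which is exactly where finiteness of the critical-point set and compactness of $\Omega$ are used.
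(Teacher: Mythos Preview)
Your argument is correct and follows the paper's approach: part (i) compares the KKT system \eqref{eq:dz_KKT} at $\dz=0$ with \eqref{eq:FOC} and invokes uniqueness of the projection onto the convex set $\C\ii$, exactly as the paper does, and part (ii) rests on \Cref{th:gen_rec_alg}, which is literally all the paper's proof says. Your compactness argument in part (ii) --- using the finiteness of the critical-point set and compactness of $\Omega$ to turn the neighborhood conclusion of \Cref{th:gen_rec_alg} into a uniform threshold $\bar\epsilon$ on $\norm{\dz}$ --- is a useful elaboration of a step the paper leaves entirely to the reader; one small slip is that $W$ in \Cref{th:gen_rec_alg} is a neighborhood of the \emph{parameter} $\bar w$, not of the output triple, but since $\text{P}\G(w)$ in \eqref{eq:P-G} depends only on the $z$-component this reduces to ``$z\ii$ close to $z^\star$'' and your argument goes through unchanged.
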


\begin{proof}
\textup{(i)} We first prove that, if $\norm{\dz\ii} = 0$, then $z\ii$ is a KKT point for \eqref{eq:original_problem}. Note that $\norm{\dz\ii} = 0$ implies $\dz\ii = 0$, i.e., that there exist $\lambda\G\ii$ and $\nu\G\ii$ such that \eqref{eq:dz_KKT} holds with $\dz\ii = 0$. Therefore, by setting $z^\star = z\ii$,  $\lambda^\star = \lambda\G\ii$ and $\nu^\star = \nu\G\ii$, the KKT conditions in \eqref{eq:FOC} are satisfied. Strict complementarity follows from \Cref{ass:6}. 

Conversely, if $z\ii$ is a critical point for \eqref{eq:original_problem}, then \eqref{eq:FOC} holds for  $z^\star = z\ii$,  $\lambda^\star = \lambda\G\ii$ and $\nu^\star = \nu\G\ii$. Now, suppose that the vector $\dz\ii$, resulting from the projection in \eqref{eq:algorithm_dz} is nonzero, i.e., there exist $\lambda\G\ii \in \R^m_{\geq 0}$ and $\nu\G\ii \in \R^p$ such that \eqref{eq:dz_KKT} hold for some $\dz\ii \neq 0$, with strict complementarity by \Cref{ass:6}. On the other hand, because of \eqref{eq:FOC}, the KKT conditions in \eqref{eq:dz_KKT} hold also for $\dz = 0$, with dual variables $\bar \lambda\G\ii \in \R^m_{\geq 0}$ and $\bar \nu\G\ii \in \R^p$. 
{This generates a contradiction,} because the projection  \eqref{eq:algorithm_dz} onto the convex set $\C\ii$, which is nonempty by \Cref{ass:2}, is unique. 

\textup{(ii)} This result follows from \Cref{th:gen_rec_alg} since the proposed method can be rewritten as a general recursive algorithm in the form of \eqref{eq:general_recursive_algorithm}.
\end{proof}

\begin{lem}
\label{lemma:dual_bounded}
For all $i\in \N$, it holds that
\begin{equation*}
\norm{\lambda\ip} \leq \max_{ k \in [0,i]} \norm{\lambda\G^{(k)}}, \
\norm{\nu\ip} \leq \max_{ k \in [0,i] } \norm{\nu\G^{(k)}}.
\end{equation*}
In addition, $\norm{\dl\ii}$ and $\norm{\dn\ii}$ are uniformly bounded for all $i \in \N$.
{\hfill $\square$}
\end{lem}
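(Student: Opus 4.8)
The plan is to exploit the convex-combination structure of the dual update. From \eqref{eq:update_dual} together with \eqref{eq:dual_alg} one has $\lambda\ip = \lambda\ii + t\ii(\lambda\G\ii - \lambda\ii) = (1-t\ii)\lambda\ii + t\ii\lambda\G\ii$, and likewise $\nu\ip = (1-t\ii)\nu\ii + t\ii\nu\G\ii$; since $t\ii \in (0,1]$ these are genuine convex combinations, so by the triangle inequality $\norm{\lambda\ip} \leq (1-t\ii)\norm{\lambda\ii} + t\ii\norm{\lambda\G\ii} \leq \max\{\norm{\lambda\ii}, \norm{\lambda\G\ii}\}$, and similarly for $\nu$. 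The first claim then follows by induction on $i$: for $i=0$ the initialization $\lambda^{(0)} = \lambda\G^{(0)}$, $\nu^{(0)} = \nu\G^{(0)}$ prescribed in \Cref{alg:nonlinear_algorithm} gives $\lambda^{(1)} = \lambda\G^{(0)}$ and $\nu^{(1)} = \nu\G^{(0)}$, which satisfy the bounds with equality; and if $\norm{\lambda\ii} \leq \max_{k\in[0,i-1]}\norm{\lambda\G^{(k)}}$, then the previous inequality yields $\norm{\lambda\ip} \leq \max\{\max_{k\in[0,i-1]}\norm{\lambda\G^{(k)}}, \norm{\lambda\G\ii}\} = \max_{k\in[0,i]}\norm{\lambda\G^{(k)}}$, and symmetrically for $\nu$.

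For the second claim it suffices to prove that $(\lambda\G\ii)_i$ and $(\nu\G\ii)_i$ are uniformly bounded, because then the first claim and the triangle inequality give $\norm{\dl\ii} = \norm{\lambda\G\ii - \lambda\ii} \leq \norm{\lambda\G\ii} + \max_{k\in[0,i-1]}\norm{\lambda\G^{(k)}}$ uniformly bounded, and analogously $\norm{\dn\ii}$. To bound $(\lambda\G\ii, \nu\G\ii)$ uniformly I would start from the first equation of the KKT system \eqref{eq:dz_KKT}: by complementary slackness the multipliers of the inactive inequality constraints vanish, so, collecting the gradients of the constraints active at $(\dz\ii, z\ii)$ into the matrix $\nabla h_\txa(z\ii)$ and the associated multipliers into $\xi\G\ii$, one has $\nabla h_\txa(z\ii)\,\xi\G\ii = -\tfrac{1}{\alpha\ii}\dz\ii - \nabla J(z\ii)$. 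By \Cref{ass:6}, $\nabla h_\txa(z\ii)$ has full column rank, hence $\xi\G\ii = -\big(\nabla h_\txa(z\ii)^\top \nabla h_\txa(z\ii)\big)^{-1}\nabla h_\txa(z\ii)^\top\big(\tfrac{1}{\alpha\ii}\dz\ii + \nabla J(z\ii)\big)$. In this expression $\norm{\dz\ii} \leq \mathrm{diam}(\Omega)$ because $z\ii, z\ii + \dz\ii \in \Omega$ by \Cref{ass:4}, the quantities $\nabla J(z\ii)$ and $\nabla h_\txa(z\ii)$ are bounded in norm by \Cref{ass:5}, and $1/\alpha\ii$ is bounded since the step sizes $\alpha\ii$ are bounded (and, as is needed here, bounded away from zero); the only remaining ingredient is a uniform upper bound on $\norm{\big(\nabla h_\txa(z\ii)^\top \nabla h_\txa(z\ii)\big)^{-1}}$.

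I expect this last ingredient to be the main obstacle, because the active set $\mathcal{I}\G(\dz\ii, z\ii)$ may change along the iterations, so the per-iteration full-rank property of \Cref{ass:6} must be upgraded to a lower bound on $\sigma_{\min}(\nabla h_\txa(z\ii))$ that is uniform in $i$. The cleanest route is to note that there are at most $2^m$ candidate active sets, and that for each candidate $\mathcal{A}$ the map sending $z$ to the smallest singular value of the matrix with columns $\nabla h_i(z)$, $i=1,\dots,p$, and $\nabla g_j(z)$, $j\in\mathcal{A}$, is continuous on the compact set $\Omega$; taking the minimum over the finitely many candidates of the infimum over the corresponding iterates yields $\underline\sigma>0$ with $\sigma_{\min}(\nabla h_\txa(z\ii)) \geq \underline\sigma$ for all $i$, hence $\norm{\big(\nabla h_\txa(z\ii)^\top \nabla h_\txa(z\ii)\big)^{-1}} \leq \underline\sigma^{-2}$ --- the one delicate point being that positivity of each such infimum requires reading \Cref{ass:6}, in the spirit of the boundedness standing assumptions, as providing regularity uniformly along (the closure of) the iterate sequence rather than only pointwise. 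Substituting all these estimates into the formula for $\xi\G\ii$ gives an iteration-independent bound on $\norm{\lambda\G\ii}$ and $\norm{\nu\G\ii}$, and thence, via the first claim and the triangle inequality as above, on $\norm{\dl\ii}$ and $\norm{\dn\ii}$.
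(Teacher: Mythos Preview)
Your proposal is correct and follows essentially the same line as the paper: both exploit the convex-combination structure $\lambda\ip=(1-t\ii)\lambda\ii+t\ii\lambda\G\ii$ together with $t\ii\in(0,1]$, establish the first claim by induction starting from $\lambda^{(0)}=\lambda\G^{(0)}$, and then obtain the bound on $\dl\ii,\dn\ii$ from the uniform boundedness of $(\lambda\G\ii,\nu\G\ii)$ and the first claim via the triangle inequality. The paper dispatches the uniform bound on $(\lambda\G\ii,\nu\G\ii)$ in one sentence by invoking \Cref{ass:6} (linear independence of the active gradients, hence uniqueness and boundedness of the multipliers); your argument through the pseudoinverse of $\nabla h_\txa(z\ii)$ is simply the explicit unpacking of that sentence, and you are right to flag that \emph{uniformity} in $i$ requires reading the regularity assumption as holding uniformly along the iterates (and that $\alpha\ii$ be bounded away from zero), a point the paper leaves implicit.
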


\begin{proof}
First note that the dual variables $\lambda\G\ii$ and $\nu\G\ii$ defined in \eqref{eq:dz_KKT} are bounded in norm, since by \Cref{ass:6} the active set is linearly independent and 
strong duality holds. 

Then the proof follows the same argument of \cite[Lemma 4.2]{gill_SQP_theory}, where the structure of \eqref{eq:dual_alg} is exploited. We equivalently define the dual variable for the inequality constraints as 
\begin{equation}
\label{eq:dual_def_2}
\begin{split}
\lambda^{(0)} &:= \lambda\G^{(0)} \\
\lambda\ip    &:= \lambda\ii + t\ii ( \lambda\G\ii - \lambda\ii) \quad \forall i \in \N.
\end{split}
\end{equation}
We proceed by induction. The result holds for $\lambda^{(0)}$. Now we assume that the {result} holds for $\lambda\ii$. Then, since $t\ii \in (0,1]$ we have that
\begin{equation*}
\begin{split}
 \norm{\lambda\ip} &=  t\ii  \norm{\lambda\G\ii}  +  (1 - t\ii) \norm{ \lambda\ii} \\ 
  & \leq 
  t\ii \norm{\lambda\G\ii}  +  (1 - t\ii) \max_{ k \in [0,i-1]} \norm{\lambda\G^{(k)}} \\
  & \leq t\ii  \max_{ k \in [0,i]} \norm{\lambda\G^{(k)}} +  (1 - t\ii) \max_{ k \in [0,i]} \norm{\lambda\G^{(k)}} \\
 & = \max_{ k \in [0,i]} \norm{\lambda\G^{(k)}}.
 \end{split}
\end{equation*}
This proves the boundedness of $\norm{\lambda\ii}$, since both $\lambda_{\textup{G}}\ii$ and $\dl\ii$ are bounded 
{by \eqref{eq:dual_alg}}.


The proof for the boundedness of the dual variables associated to the equality constraints is analogous.
\end{proof}


%

The following Lemma serves as a tuning rule for the parameter $\rho$.
\smallskip
\begin{lem}
\label{lemma:tune_eta}
There exists $\hat \rho \in \R_{\geq 0}$ such that
\begin{equation*}
\begin{split}
& \sup_{\rho \geq \hat\rho}  \phi' (0, \rho) \leq -\frac{1}{2\alpha\ii} \norm{\dz\ii}^2,
\end{split}
\end{equation*}
where $\phi$ is as in \eqref{eq:def_phi} and $\alpha\ii$ and $\dz\ii$ are from \eqref{eq:algorithm_dz}.
{\hfill $\square$}
\end{lem}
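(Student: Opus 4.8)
The plan is to compute $\phi'(0,\rho)$ in closed form and then to read off that enlarging $\rho$ forces the claimed inequality, which is precisely condition \eqref{eq:condition_phi_PM}. The argument parallels \cite[Lemma 4.3]{gill_SQP_theory}, with the SQP Hessian $H\ii$ there replaced by the positive definite matrix $\frac{1}{\alpha\ii}I$. First I would differentiate $\phi$ from \eqref{eq:def_phi} at $t=0$ along the direction $(\dz\ii,\dl\ii,\dn\ii,\ds\ii)$, applying the chain rule to \eqref{eq:augmented_lagrangian}. Into the resulting expression I would substitute: the first KKT equation of \eqref{eq:dz_KKT}, namely $\nabla J(z\ii)=-\frac{1}{\alpha\ii}\dz\ii-\nabla g(z\ii)\lambda\G\ii-\nabla h(z\ii)\nu\G\ii$; the complementarity relation $\diag(\lambda\G\ii)\big(g(z\ii)+\nabla g(z\ii)^\top\dz\ii\big)=0$ and the equality $\nabla h(z\ii)^\top\dz\ii=-h(z\ii)$; the slack identity \eqref{eq:ds_def}, which produces the key cancellation $\nabla g(z\ii)^\top\dz\ii+\ds\ii=-\big(g(z\ii)+s\ii\big)$; and the dual increments \eqref{eq:dual_alg}.

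Once the cross terms collapse, the result is
\begin{equation*}
\phi'(0,\rho)=-\frac{1}{\alpha\ii}\norm{\dz\ii}^2-\rho\Big(\norm{g(z\ii)+s\ii}^2+\norm{h(z\ii)}^2\Big)+r\ii(\rho),
\end{equation*}
where $r\ii(\rho):=(2g(z\ii)+s\ii)^\top\lambda\G\ii+2\,h(z\ii)^\top\nu\G\ii-2\big(g(z\ii)+s\ii\big)^\top\lambda\ii-2\,h(z\ii)^\top\nu\ii$ is linear in $\big(g(z\ii),h(z\ii),s\ii\big)$ with $\rho$-independent coefficients. These coefficients are uniformly bounded ($\lambda\ii,\nu\ii$ by \Cref{lemma:dual_bounded}, $\lambda\G\ii,\nu\G\ii$ by \Cref{ass:6}), $g$ and $h$ are bounded on the compact set $\Omega$ by \Cref{ass:5}, and $0\le s\ii_j\le\max\{0,-g_j(z\ii)\}$ by \eqref{eq:slack_s}; hence $|r\ii(\rho)|\le\bar r$ for some constant $\bar r$ and all $\rho\ge1$.

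It then suffices to split on whether $z\ii$ is feasible for \eqref{eq:original_problem}. If some component of $g(z\ii)$ is positive or $h(z\ii)\ne0$, then, using $g_j(z\ii)+s\ii_j=\max\{g_j(z\ii),-\lambda\ii_j/\rho\}$, the quantity $\norm{g(z\ii)+s\ii}^2+\norm{h(z\ii)}^2$ increases to $\sum_j\max\{0,g_j(z\ii)\}^2+\norm{h(z\ii)}^2>0$ as $\rho\to\infty$, hence stays above some $\epsilon>0$ for $\rho$ large; then the penalty term dominates the bounded remainder $r\ii(\rho)$ and \eqref{eq:condition_phi_PM} holds once $\rho$ exceeds a threshold of order $\bar r/\epsilon$. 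If instead $g(z\ii)\le0$ and $h(z\ii)=0$, then $g(z\ii)+s\ii\le0$ componentwise, and a direct computation — using the defining property of $s\ii$ in \eqref{eq:slack_s}, i.e.\ that $\mu:=\lambda\ii+\rho(g(z\ii)+s\ii)\ge0$ with $\mu^\top s\ii=0$, together with $\lambda\G\ii\ge0$ and $2g(z\ii)+s\ii\le0$ — shows that, for $\rho$ large, $r\ii(\rho)-\rho\norm{g(z\ii)+s\ii}^2\le C/\rho$ for a constant $C$; since $\dz\ii\ne0$ (otherwise the algorithm would have terminated), once $\rho$ exceeds a threshold of order $\alpha\ii C/\norm{\dz\ii}^2$ this bound is $\le\frac{1}{2\alpha\ii}\norm{\dz\ii}^2$, so \eqref{eq:condition_phi_PM} again holds. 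Taking $\hat\rho$ to be the maximum of the finitely many thresholds arising above completes the proof.

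The step I expect to be the main obstacle is the feasible case. There the penalty $-\rho\big(\norm{g(z\ii)+s\ii}^2+\norm{h(z\ii)}^2\big)$ need not diverge: on the components with $g_j(z\ii)<0<\lambda\ii_j$ one eventually has $g_j(z\ii)+s\ii_j=-\lambda\ii_j/\rho=O(1/\rho)$, so $\rho\norm{g(z\ii)+s\ii}^2=O(1/\rho)\to0$, and the crude ``bounded remainder versus diverging penalty'' reasoning of the infeasible case breaks down. One must instead retain the exact coefficients of $r\ii(\rho)$, exploit the sign conditions $(2g(z\ii)+s\ii)^\top\lambda\G\ii\le0$ and $\mu^\top s\ii=0$, and crucially use $\dz\ii\ne0$ so that the target $\frac{1}{2\alpha\ii}\norm{\dz\ii}^2$ is strictly positive. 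The remaining ingredients — the chain-rule differentiation and the cancellations leading to the displayed formula — are routine and follow \cite[Lemma 4.3]{gill_SQP_theory}.
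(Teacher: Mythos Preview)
Your chain-rule computation and substitution of the KKT conditions \eqref{eq:dz_KKT}, the slack identity \eqref{eq:ds_def}, and the dual increments \eqref{eq:dual_alg} are correct, and your displayed formula for $\phi'(0,\rho)$ with the remainder $r\ii(\rho)=(2g+s)^\top\lambda\G\ii+2h^\top\nu\G\ii-2(g+s)^\top\lambda\ii-2h^\top\nu\ii$ checks out (it uses complementarity to collapse $\ds^\top\lambda\G\ii=-s^\top\lambda\G\ii$). Your feasible/infeasible split and the componentwise estimate $r\ii(\rho)-\rho\norm{g+s}^2\le \norm{\lambda\ii}^2/\rho$ in the feasible case are also sound.

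The paper, however, takes a shorter and more useful route. It does \emph{not} absorb $\ds^\top\lambda\G\ii$ into the remainder and does \emph{not} split into cases. Instead it keeps the expression grouped in terms of $\dl\ii,\dn\ii$, observes that $\ds^\top\lambda\G\ii\le 0$, and reduces the target inequality to
\[
2\begin{bmatrix}(g+s)^\top & h^\top\end{bmatrix}\begin{bmatrix}\dl\ii\\ \dn\ii\end{bmatrix}-\rho\Big(\norm{g+s}^2+\norm{h}^2\Big)\le \tfrac{1}{2\alpha\ii}\norm{\dz\ii}^2.
\]
A single Cauchy--Schwarz step then yields the explicit threshold
\[
\hat\rho = \frac{2\,\norm{[\dl\ii;\dn\ii]}}{\norm{[g+s;h]}},
\]
with $\hat\rho=0$ whenever the left-hand side is already below $\tfrac{1}{2\alpha\ii}\norm{\dz\ii}^2$. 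This closed-form $\hat\rho$ is not a cosmetic convenience: it is precisely what drives the next result, \Cref{lemma:unbounded_rho}, where the bounds $\rho\ir\norm{\dz\ir}^2\le N_\rho$ and $\rho\ir\norm{[g+s;h]}\le N_\rho$ are read off directly from this formula. Your proof, while correct, does not produce such a formula, so you would need additional work later to recover those bounds. In short: your approach is valid but heavier than necessary, and it forfeits the explicit expression for $\hat\rho$ that the paper relies on downstream.
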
 
\smallskip

\begin{proof}
For a given $\rho \in \mathbb{R}_{\geq 0}$, the gradient of $\mathcal{L}_{\textup{aug}}$ in \eqref{eq:augmented_lagrangian} is	
\begin{equation*}
\left[ \begin{matrix}
\nabla J(z) + \nabla g(z)  \lambda + \nabla h(z) \nu + \rho \nabla g(z) (g(z) + s) +  \rho \nabla h(z) \, h(z)\\  g(z) + s \\ h(z) \\ \lambda + \rho \, ( g(z) + s)
\end{matrix} \right].
\end{equation*}
Therefore, using a simplified notation, we have
\begin{equation*}
\begin{split}
\phi'(0) = \ &\dz^\top \left( \nabla J + \nabla g \,  \lambda + \nabla h \, \nu + \rho \nabla g \, \left(g + s\right) + \rho \nabla h \, h \right)  \\ 
& + \dl^\top (g + s)+ \dn^\top h + \ds^\top \left( \lambda + \rho ( g + s) \right) \\
= \ & \dz^\top \nabla J 
+ ( \nabla g^\top \dz + \ds)^\top \lambda + \rho ( \nabla g^\top \dz + \ds)^\top (g + s)\\ 
& + \dz^\top \nabla h \nu + \dl^\top (g+s) + \dn^\top h + \rho (\nabla h^\top \dz)^\top h \\
= \ & \dz^\top \nabla J 
- ( g +s )^\top (\lambda - \dl) - \rho ( g + s)^\top (g+s)  \\ 
& - h^\top (\nu - \dn) - \rho h^\top h, 
\end{split}
\end{equation*}
where the last step follows from \eqref{eq:ds_def} and the last equation in \eqref{eq:dz_KKT}, as these imply that $\dz$ satisfies
\begin{subequations}
\label{eq:auxiliary_terms}
\begin{align}
\label{eq:auxiliary_terms_1} & \nabla g ^\top \dz + \ds = - (g + s) \\
\label{eq:auxiliary_terms_2} & \nabla h ^\top \dz = -h.
\end{align}
\end{subequations}
{By} rearranging the first equation in \eqref{eq:dz_KKT} and {using} the definition of $\lambda\G$ and $\nu\G$ {we have}
\begin{equation*}
\nabla J = -\frac{1}{\alpha} \dz - \nabla g \lambda\G - \nabla h \nu\G,
\end{equation*}
that, combined with  \eqref{eq:auxiliary_terms_1}, yields:
\begin{equation*}
\begin{split}
\phi'(0) = - &\frac{1}{\alpha} \dz^\top \dz - \dz^\top \nabla g \, \lambda\G - \dz^\top \nabla h \, \nu\G -  (g + s)^\top ( \lambda - \dl) \\ 
& -\rho ( g + s)^\top (g+s) - h^\top ( \nu - \dn) - \rho \, h^\top h \\
= \  - &\frac{1}{\alpha} \dz^\top \dz+ \ds^\top \lambda\G + (g + s)^\top \lambda\G  
- ( g +s )^\top (\lambda - \dl)  \\ 
&- \rho ( g + s)^\top (g+s) + h^\top \nu_G - h^\top (\nu- \dn) - \rho \, h^\top h.
\end{split}
\end{equation*}
By \eqref{eq:dual_alg} and the right-hand side of \eqref{eq:condition_phi_PM}, we then obtain   
\begin{equation*}
\begin{split}
 & \ds^\top \lambda\G + 2 (g + s)^\top \dl  - \rho \, ( g + s)^\top (g+s) + 2 h^\top  \dn - \rho \, h^\top h \\
 = \ &\ds^\top \lambda\G  + 2 \left[ \begin{matrix} (g+s)^\top &  h^\top \end{matrix} \right] \left[ \begin{matrix} \dl \\ \dn \end{matrix} \right] - \rho ( g + s)^\top (g+s) - \rho \, h^\top h \\
\leq \ &\frac{1}{2 \alpha} \norm{\dz}^2.
\end{split}
\end{equation*}
Note that $\ds^\top \lambda\G \leq 0$ because of \eqref{eq:ds_def} and the complementarity conditions in \eqref{eq:dz_KKT}. 
The determination of $\hat \rho$ is non-trivial only if
\begin{equation}
\label{eq:must_increase_eta_rho}
2 \left[ \begin{matrix} (g+s)^\top &  h^\top \end{matrix} \right] \left[ \begin{matrix} \dl \\ \dn \end{matrix} \right] \geq \frac{1}{2 \alpha}  \norm{\dz}^2, 
\end{equation} 
otherwise we can take $\hat \rho = 0$. Hence, if \eqref{eq:must_increase_eta_rho} holds, then we take $\hat \rho$ such that 
\begin{equation*}
\begin{split}
2 \left[ \begin{matrix} (g+s)^\top &  h^\top \end{matrix} \right] \left[ \begin{matrix} \dl \\ \dn \end{matrix} \right] \leq 2 \norm{ \left[ \begin{array}{c} g+s \\  h \end{array} \right] } \norm{ \left[ \begin{array}{c} \dl \\ \dn \end{array} \right]} \leq \hat \rho \norm{ \left[ \begin{array}{c} g+s \\  h \end{array} \right] }^2.
\end{split}
\end{equation*}
This is equivalent to
\begin{equation}
\label{eq:rho_hat}
\hat \rho = \textstyle 2 \, \left( {\textstyle\norm{ \left[
\begin{smallmatrix} \dl \\ \dn \end{smallmatrix} \right]
}}\right)/{\textstyle\norm{ 
\left[ \begin{smallmatrix} g + s \\ h \end{smallmatrix} \right]
}}.
\end{equation}
\end{proof}
\smallskip

Therefore, we can define the penalty parameter at the beginning of each iteration $i$ as follows.
\begin{equation}
\rho\ii := 
\begin{cases}
\rho\im & \text{if } \phi'(0,\rho\im) \leq - \frac{1}{2\alpha\ii} \norm{\dz\ii}^2 \\
\max \{\hat \rho\ii, \, 2 \rho\im\} & \text{otherwise},
\end{cases}
\end{equation}
where {$\hat \rho\ii = \hat{\rho}$ as in} \eqref{eq:rho_hat} with $d_{\lambda}$, $d_{\nu}$, $z$ and $s$ evaluated at the current iteration $i$.


\begin{rem}
Note that the parameter $\rho\ii$ can possibly diverge for $i\rightarrow \infty$, if there exists an infinite set of iterations $\{i_l\}_l$ where the parameter {strictly increases}. The statements given next consider this possibility and prove convergence in a general case.
\hfill $\square$
\end{rem}

%

\begin{lem}
\label{lemma:unbounded_rho}
Suppose $\left\{i_l \right\}_l$ is the set of iterations in which the penalty parameter $\rho$ increases. Then,
\begin{subequations}
\begin{align}
&\rho\ir \norm{\dz\ir}^2 \leq N_\rho
\label{eq:N_rho_1} \\
&\rho\ir \norm{ \left[ \begin{matrix} g(z\ir) + s\ir \\
h(z\ir)  \end{matrix} \right] } \leq N_\rho \label{eq:N_rho_2} 
\end{align}
\end{subequations}
for some {$N_\rho \in \R_{> 0}$.}
\end{lem}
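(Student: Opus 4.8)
The plan is to show that at every iteration $i_l$ where $\rho$ strictly increases, the updated value $\rho\ir$ is at most twice the minimal admissible penalty $\hat\rho\ir$ from \eqref{eq:rho_hat} evaluated at $i_l$; the two claimed bounds then follow from \Cref{lemma:dual_bounded} (uniform boundedness of $\dl\ii,\dn\ii$) and the uniform upper bound on the step sizes $\alpha\ii$. Throughout I abbreviate $r\ir := \big[\,g(z\ir)+s\ir\,;\,h(z\ir)\,\big]$ for the constraint residual and $\delta\ir := \big[\,\dl\ir\,;\,\dn\ir\,\big]$ for the dual increment; this mirrors the argument for the analogous penalty bound in \cite{gill_SQP_theory}.

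First I would note that, since in \Cref{alg:nonlinear_algorithm} the penalty is updated only after the stopping test $\dz\ir=0$, any $i_l$ satisfies $\dz\ir\neq 0$, and $\rho$ increases there precisely because the descent test \eqref{eq:condition_phi_PM} fails for $\rho=\rho^{(i_l-1)}$. Inserting the closed-form expression for $\phi'(0)$ obtained in the proof of \Cref{lemma:tune_eta}, and using $(\ds\ir)^\top\lambda\G\ir\le 0$ together with $\rho^{(i_l-1)}\norm{r\ir}^2\ge 0$, the failure of the test yields the two inequalities $2\,(r\ir)^\top\delta\ir>\tfrac{1}{2\alpha\ir}\norm{\dz\ir}^2$ and $2\,(r\ir)^\top\delta\ir>\rho^{(i_l-1)}\norm{r\ir}^2$. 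The first of these, combined with $\dz\ir\neq 0$, rules out $\norm{r\ir}=0$; the second together with Cauchy--Schwarz then gives $\rho^{(i_l-1)}\norm{r\ir}<2\norm{\delta\ir}$, i.e.\ $\rho^{(i_l-1)}<\hat\rho\ir$ with $\hat\rho\ir$ as in \eqref{eq:rho_hat}. Hence $\rho\ir=\max\{\hat\rho\ir,\,2\rho^{(i_l-1)}\}\le 2\hat\rho\ir$.

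From $\rho\ir\le 2\hat\rho\ir$ and $\hat\rho\ir=2\norm{\delta\ir}/\norm{r\ir}$ I obtain $\rho\ir\norm{r\ir}\le 2\hat\rho\ir\norm{r\ir}=4\norm{\delta\ir}$, which is uniformly bounded by \Cref{lemma:dual_bounded}; this is \eqref{eq:N_rho_2}. For \eqref{eq:N_rho_1}, the first inequality above and Cauchy--Schwarz give $\norm{\dz\ir}^2<4\alpha\ir(r\ir)^\top\delta\ir\le 4\alpha\ir\norm{r\ir}\norm{\delta\ir}$, so multiplying by $\rho\ir$ and substituting the bound just obtained, $\rho\ir\norm{\dz\ir}^2<4\alpha\ir\norm{\delta\ir}\cdot\big(\rho\ir\norm{r\ir}\big)\le 16\,\bar\alpha\,\norm{\delta\ir}^2$, where $\bar\alpha$ is a uniform upper bound on the step sizes; \Cref{lemma:dual_bounded} again bounds $\norm{\delta\ir}$. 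Taking $N_\rho$ to be any positive number exceeding the larger of the two resulting constants completes the proof.

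The only genuinely non-routine point is extracting $\rho^{(i_l-1)}<\hat\rho\ir$ from the violated descent test: this requires reproducing the computation of $\phi'(0)$ from \Cref{lemma:tune_eta}, keeping careful track of the sign of $(\ds)^\top\lambda\G$ (nonpositive by \eqref{eq:ds_def} and the complementarity conditions in \eqref{eq:dz_KKT}), and excluding the degenerate case $\norm{r\ir}=0$ -- which cannot occur when $\rho$ increases, since then $\hat\rho\ir>0$ (otherwise $\rho=0$ would already satisfy the test and $\rho$ would not increase), so \eqref{eq:must_increase_eta_rho} holds, and this together with $\dz\ir\neq 0$ forces $\norm{r\ir}>0$. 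Everything downstream is a short chain of Cauchy--Schwarz estimates and invocations of \Cref{lemma:dual_bounded}.
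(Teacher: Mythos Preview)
Your proof is correct and follows essentially the same route as the paper: both arguments hinge on the fact that an increase of $\rho$ forces \eqref{eq:must_increase_eta_rho}, then combine Cauchy--Schwarz with the explicit formula $\hat\rho=2\norm{\delta}/\norm{r}$ and \Cref{lemma:dual_bounded} to obtain the two bounds. You are in fact slightly more careful than the paper in one place: you explicitly derive $\rho^{(i_l-1)}<\hat\rho\ir$ from the violated descent test and hence $\rho\ir\le 2\hat\rho\ir$, whereas the paper simply writes $\rho\le 2\hat\rho$ in \eqref{eq:Lemma9_intermediate} without spelling out why $2\rho^{(i_l-1)}$ cannot exceed $2\hat\rho\ir$.
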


\smallskip 
\begin{proof}
The argument of the functions and the index $i_\rho$ are dropped for ease of notation. In order for the penalty parameter to {increase}, {the} conditions in \eqref{eq:must_increase_eta_rho} must hold, {that is},
 \begin{equation*}
 \frac{1}{2 \alpha}  \norm{\dz}^2 \leq 2 \left[ \begin{matrix} (g+s)^\top &  h^\top \end{matrix} \right] \left[ \begin{matrix} \dl \\ \dn \end{matrix} \right] \leq 2 \norm{ \left[ \begin{array}{c} g+s \\  h \end{array} \right] } \norm{ \left[ \begin{array}{c} \dl \\ \dn \end{array} \right]},
 \end{equation*}
{hence}
  \begin{equation*}
 \norm{ \left[ \begin{array}{c} g+s \\  h \end{array} \right] } \geq \frac{1}{4 \alpha} \frac{\norm{\dz}^2}{\norm{ \left[ \begin{array}{c} \dl \\ \dn \end{array} \right]}}.
 \end{equation*}
By substituting this last inequality into the definition of $\hat \rho$, {we have that}
\begin{equation*}
 \hat \rho = \frac{2 \norm{ \left[
\begin{array}{c} \dl \\ \dn \end{array} \right]
}}{\norm{ 
\left[ \begin{array}{c} g+s \\ h \end{array} \right]
}} \leq 8 \alpha \frac{  \norm{ \left[
\begin{array}{c} \dl \\ \dn \end{array} \right] }^2}{\norm{\dz}^2},
\end{equation*}
and the desired result in \eqref{eq:N_rho_1} hold {due to} \Cref{lemma:dual_bounded}. The following relation proves  \eqref{eq:N_rho_2}:
\begin{multline}
\label{eq:Lemma9_intermediate}
\rho \norm{ 
\left[ \begin{array}{c} g+s \\ h \end{array} \right]
} \leq 2 \hat{\rho} \norm{ 
\left[ \begin{array}{c} g+s \\ h \end{array} \right]
} \\ = 2 \frac{2 \norm{ \left[
\begin{array}{c} \dl \\ \dn \end{array} \right]
}}{\norm{ 
\left[ \begin{array}{c} g+s \\ h \end{array} \right]
}} \norm{ 
\left[ \begin{array}{c} g+s \\ h \end{array} \right]
} = 4 \norm{ \left[
\begin{array}{c} \dl \\ \dn \end{array} \right]
}.
\end{multline}
\end{proof}

The following two Lemmas, provided without proofs, give intermediate technical results that are required for the main results. Their proofs follow the same arguments in \cite{gill_SQP_theory} with minor adjustments to reflect the update rule in \Cref{alg:nonlinear_algorithm}. 

\begin{lem}[{\cite[Lemma 4.6]{gill_SQP_theory}}]
\label{lemma:technical_unbounded}
Let $\{ i_l \}_l$ denote the set of iterations for which the parameter $\rho\ir$ increases.
Then, there exists $M \in \R_{>0}$ such that, for all $l\in \N$,
\begin{equation}
\label{eq:technical_bounded_2}
\rho\ir \sum_{i = i_l}^{i_{l+1} - 1} \norm{ t\ii \dz\ii}^2 < M.
\end{equation}
{$\hfill \square$}
\end{lem}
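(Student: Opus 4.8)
The plan is to reproduce the proof of \cite[Lemma 4.6]{gill_SQP_theory}, the only structural changes being that the inner ``Hessian'' $H\ii$ there is replaced by $\frac{1}{\alpha\ii}I$ and that the penalty-selection test \eqref{eq:condition_eta_SQP} is replaced by \eqref{eq:condition_phi_PM}. Fix the block $i\in[i_l,i_{l+1}-1]$, on which $\rho$ is frozen at $\rho_l:=\rho\ir$, and abbreviate $\Lagr_{\textup{aug}}\ii(\rho_l):=\Lagr_{\textup{aug}}(z\ii,\lambda\ii,\nu\ii,s\ii,\rho_l)$, so that the function $\phi$ of \eqref{eq:def_phi} refers to the same $\Lagr_{\textup{aug}}(\cdot,\rho_l)$ for every $i$ in the block. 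Then the Armijo condition \eqref{eq:Wolfe_condition_1} together with the construction \eqref{eq:condition_phi_PM} of $\rho_l$ gives the per-iteration decrease
\[
\Lagr_{\textup{aug}}\ii(\rho_l)-\Lagr_{\textup{aug}}\ip(\rho_l)\;\ge\;-\sigma_1 t\ii\phi'(0)\;\ge\;\frac{\sigma_1 t\ii}{2\alpha\ii}\norm{\dz\ii}^2\;\ge\;\frac{\sigma_1}{2\bar\alpha}\norm{t\ii\dz\ii}^2,
\]
with $t\ii\in(0,1]$ and $\bar\alpha$ a uniform upper bound on the (bounded) step sizes $\alpha\ii$. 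Telescoping over the block bounds $\sum_{i=i_l}^{i_{l+1}-1}\norm{t\ii\dz\ii}^2$ by $\frac{2\bar\alpha}{\sigma_1}\,\Delta_l$, where $\Delta_l:=\Lagr_{\textup{aug}}^{(i_l)}(\rho_l)-\Lagr_{\textup{aug}}^{(i_{l+1})}(\rho_l)$, so the claim reduces to showing that $\rho_l\Delta_l$ is uniformly bounded.

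For the latter I would follow the bookkeeping of \cite[Lemma 4.6]{gill_SQP_theory}, whose ingredients here are: the explicit expression for $\phi'(0)$ obtained inside the proof of \Cref{lemma:tune_eta}, which with $v\ii:=[\,g(z\ii)+s\ii\,;\,h(z\ii)\,]$ and $(\ds\ii)^\top\lambda\G\ii\le0$ (from \eqref{eq:ds_def} and the complementarity in \eqref{eq:dz_KKT}) reads
\[
-\phi'(0)\;\ge\;\frac{1}{\alpha\ii}\norm{\dz\ii}^2-2(v\ii)^\top[\dl\ii;\dn\ii]+\rho_l\norm{v\ii}^2 ;
\]
\Cref{lemma:dual_bounded}, which makes $\norm{[\dl\ii;\dn\ii]}$ uniformly bounded; \Cref{lemma:unbounded_rho}, whose inequalities \eqref{eq:N_rho_1}--\eqref{eq:N_rho_2} bound $\rho\ir\norm{\dz\ir}^2$ and $\rho\ir\norm{v\ir}$ at every iteration $i_l$ where $\rho$ increases, hence at both endpoints of the block; and the uniform bounds of \Cref{ass:4,ass:5}. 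Expanding $\Delta_l$ and inserting these, every $\rho_l$-weighted penalty and multiplier contribution collapses to an $O(1)$ quantity, so the only remaining term needing control is the $\rho_l$-weighted objective difference $\rho_l\big(J(z^{(i_l)})-J(z^{(i_{l+1})})\big)$.

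The hard part is precisely this term: $\Delta_l$ is only $O(1)$ -- it contains the unweighted change $J(z^{(i_l)})-J(z^{(i_{l+1})})$ -- so the factor $\rho_l$ cannot be absorbed trivially, and one must carry through the line-search argument of \cite[Lemma 4.6]{gill_SQP_theory}, in which the strong Wolfe curvature condition \eqref{eq:Wolfe_condition_2} couples the accepted step size to the penalty-dependent curvature of the merit function. The structural fact that makes this work is that, along the search direction $(\dz\ii,\dl\ii,\dn\ii,\ds\ii)$, the second derivative of $\Lagr_{\textup{aug}}(\cdot,\rho_l)$ is bounded by $C_0+C_1\rho_l$ with $C_0,C_1$ independent of $i$ and $l$: its $\rho_l$-proportional part is, to leading order, $\rho_l\big(\norm{\nabla g(z\ii)^\top\dz\ii}^2+\norm{\nabla h(z\ii)^\top\dz\ii}^2\big)$, which by \eqref{eq:auxiliary_terms} equals $\rho_l\big(\norm{g(z\ii)+s\ii+\ds\ii}^2+\norm{h(z\ii)}^2\big)$ plus terms of order $\rho_l\norm{v\ii}$, and all of these are uniformly bounded by \Cref{ass:4,ass:5}, \Cref{lemma:dual_bounded} and \eqref{eq:dz_KKT} (which yields $\dz\ii=-\alpha\ii\nabla\Lagr(z\ii,\lambda\G\ii,\nu\G\ii)$, hence $\norm{\dz\ii}$ bounded). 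Feeding this Lipschitz estimate, the refined inequality above, and \eqref{eq:N_rho_1}--\eqref{eq:N_rho_2} into the backtracking-line-search estimate exactly as in the reference yields $\rho_l\Delta_l\le M$ for a constant $M$ depending only on $\sigma_1,\sigma_2,\bar\alpha$, the uniform bounds of \Cref{ass:4,ass:5}, and the constants of \Cref{lemma:dual_bounded,lemma:unbounded_rho}. I expect this coupling of the curvature condition with the at-most-linear growth in $\rho_l$ of the merit-function curvature to be the only genuinely delicate point; the substitution $H\ii\mapsto\frac{1}{\alpha\ii}I$ is otherwise transparent and affects none of the constants.
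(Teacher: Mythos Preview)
The paper does not supply its own proof of this lemma: it explicitly states that the proof ``follows the same arguments in \cite{gill_SQP_theory} with minor adjustments to reflect the update rule in \Cref{alg:nonlinear_algorithm}.'' Your proposal is precisely an attempt to spell out those minor adjustments---replacing $H\ii$ by $\tfrac{1}{\alpha\ii}I$ and the descent test \eqref{eq:condition_eta_SQP} by \eqref{eq:condition_phi_PM}---and in that sense it is exactly aligned with what the paper intends.

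Your outline is faithful: the telescoping of the Armijo decrease over the block $[i_l,i_{l+1}-1]$ with fixed $\rho_l$ (accounting for the fact that the slack reset between iterations only further decreases $\Lagr_{\textup{aug}}$), the reduction to bounding $\rho_l\Delta_l$, the use of \Cref{lemma:unbounded_rho} to control $\rho_l\|v\|$ and $\rho_l\|\dz\|^2$ at both block endpoints, and the identification of $\rho_l\big(J(z^{(i_l)})-J(z^{(i_{l+1})})\big)$ as the only term not immediately dominated---all of this mirrors the structure of \cite[Lemma~4.6]{gill_SQP_theory}. Your remark that the second Wolfe condition \eqref{eq:Wolfe_condition_2} is what ultimately ties the accepted step to the $\rho_l$-dependent curvature of $\phi$ is also the mechanism used in the reference. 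Since the paper offers nothing beyond the citation, there is no discrepancy to report; your sketch is the expected adaptation.
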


\begin{lem}[{\cite[Lemma 4.9]{gill_SQP_theory}}]
\label{lemma:phi_property}
The step size $t\ii$ defined according to \eqref{eq:Wolfe_condition} satisfies
$\phi(t\ii) - \phi(0) \leq \sigma_1 t\ii \phi'(0)$,
where $\sigma_1 < \frac12$ {and} $t\ii > \bar{t}$, for some $\bar{t}>0$ independent of $i$.
{$\hfill \square$}
\end{lem}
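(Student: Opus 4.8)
The statement is cited as \cite[Lemma 4.9]{gill_SQP_theory}, so the plan is to reproduce that argument with the modifications dictated by \Cref{alg:nonlinear_algorithm}. The first inequality, $\phi(t\ii)-\phi(0)\le\sigma_1 t\ii\phi'(0)$, is nothing but the Armijo (first Wolfe) condition \eqref{eq:Wolfe_condition_1}, which $t\ii$ satisfies by construction; the substance is therefore the uniform lower bound $t\ii>\bar t$. My plan is the classical backtracking argument: show that \eqref{eq:Wolfe_condition_1} is automatically satisfied on an interval $(0,t_0]$ whose right endpoint $t_0$ is bounded below by a constant independent of $i$, and then observe that a safeguarded line search started at $t=1$ that contracts the trial step by at most a fixed factor $\beta\in(0,1)$ cannot terminate below $\min\{1,\beta t_0\}$.

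First I would establish a uniform bound on the curvature of $\phi$ on $[0,1]$. Since $J$, $g$, $h$ are twice continuously differentiable, $\phi$ from \eqref{eq:def_phi} is twice continuously differentiable and $\phi''(t)=d^\top\nabla^2\Lagr_{\textup{aug}}\!\big(z\ii+t\dz\ii,\lambda\ii+t\dl\ii,\nu\ii+t\dn\ii,s\ii+t\ds\ii\big)\,d$ with $d:=(\dz\ii,\dl\ii,\dn\ii,\ds\ii)$. Using \Cref{ass:4} and \Cref{ass:5} (the iterates remain in a compact set on which $J$, $g$, $h$ and their first two derivatives are bounded), \Cref{lemma:dual_bounded} (boundedness of $\lambda\ii,\nu\ii,\dl\ii,\dn\ii$), the boundedness of $s\ii,\ds\ii$ implied by \eqref{eq:slack_s}--\eqref{eq:ds_def}, and the boundedness of $\alpha\ii$, every block of $\nabla^2\Lagr_{\textup{aug}}$ that does not carry the factor $\rho$ is uniformly bounded. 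For the $\rho$-weighted blocks I would rewrite the relevant quadratic forms via the linearization identities \eqref{eq:auxiliary_terms}, so that $\rho$ ends up multiplying $\norm{g(z\ii)+s\ii}$-, $\norm{h(z\ii)}$- and $\norm{\ds\ii}$-type quantities, and then invoke \Cref{lemma:unbounded_rho} (which gives $\rho\ir\norm{\dz\ir}^2\le N_\rho$ and $\rho\ir\norm{(g(z\ir)+s\ir;\,h(z\ir))}\le N_\rho$ on the iterations where $\rho$ grows) together with \Cref{lemma:technical_unbounded}; since $\rho$ is nondecreasing, this yields a bound $\Phi$ on $\sup_{t\in[0,1]}|\phi''(t)|$ that is independent of $i$.

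Next, Taylor's theorem gives $\phi(t)-\phi(0)\le t\,\phi'(0)+\frac{\Phi}{2}t^2$ for $t\in[0,1]$. By the rule for selecting $\rho$ (condition \eqref{eq:condition_phi_PM} and \Cref{lemma:tune_eta}) we have $\phi'(0)\le-\frac{1}{2\alpha\ii}\norm{\dz\ii}^2<0$ whenever $\dz\ii\neq 0$, hence \eqref{eq:Wolfe_condition_1} holds for every $t\in(0,t_0]$ with $t_0:=2(1-\sigma_1)|\phi'(0)|/\Phi$ (and for all $t>0$ if the curvature along the segment is nonpositive); backtracking from $t=1$ therefore returns $t\ii\ge\min\{1,\beta t_0\}$. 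The curvature part of \eqref{eq:Wolfe_condition_2} does not spoil this: either $t=1$ already satisfies $\phi'(1)\le-\sigma_2\phi'(0)$, or a genuine Wolfe point exists in $(0,1]$ and the same curvature estimate keeps it bounded away from $0$; in the simplified Armijo-only implementation used in practice this check is vacuous.

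The main obstacle is to make $t_0$ --- equivalently the ratio $|\phi'(0)|/\Phi$ --- uniformly positive. In the expansion of $\phi'(0)$ derived in the proof of \Cref{lemma:tune_eta} the term $-\rho(\norm{g(z\ii)+s\ii}^2+\norm{h(z\ii)}^2)$ appears, and it must be matched against the corresponding $\rho$-pieces of $\Phi$ so that they effectively cancel in the ratio, leaving the $\norm{\dz\ii}^2$-terms (and the bound $|\phi'(0)|\ge\frac{1}{2\alpha\ii}\norm{\dz\ii}^2$) to control $t_0$; the delicate point is that on iterations between two successive $\rho$-increases $\rho$ is frozen at a possibly large value, so bounding $\rho\norm{g(z\ii)+s\ii}$ and $\rho\norm{\dz\ii}^2$ there needs the accumulation estimate \Cref{lemma:technical_unbounded}, which is itself intertwined with the step-size lower bound --- this is precisely the bookkeeping of \cite{gill_SQP_theory} that has to be re-derived for \Cref{alg:nonlinear_algorithm}. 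A coarser self-contained alternative is a case split: if $\norm{\dz\ii}\ge\epsilon_0$ the ratio is bounded below directly, while if $\norm{\dz\ii}\le\bar\epsilon$ then by \Cref{lemma:active_set}(ii) and \Cref{th:gen_rec_alg} the iterate lies in a neighborhood of a critical point on which, as shown in \Cref{sec:t_1}, the unit step $t\ii=1$ meets the Wolfe conditions. Taking $\bar t$ to be the minimum of the resulting bounds yields the assertion.
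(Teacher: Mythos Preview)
The paper does not prove this lemma at all: it is one of the two results that are explicitly ``provided without proofs'', with the remark that ``their proofs follow the same arguments in \cite{gill_SQP_theory} with minor adjustments to reflect the update rule in \Cref{alg:nonlinear_algorithm}.'' So there is nothing in the paper to compare your argument against beyond that pointer; your plan of reproducing the argument of \cite[Lemma~4.9]{gill_SQP_theory} is exactly what the paper intends.

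Your main line (bound $\phi''$ on $[0,1]$, Taylor-expand, read off a lower bound for the first Armijo-admissible step, then use the safeguarded backtracking factor) is the standard skeleton and matches what the cited reference does. You also correctly isolate the genuine difficulty, namely controlling the ratio $|\phi'(0)|/\Phi$ uniformly in $i$, including the $\rho$-weighted pieces of $\phi''$. Two points deserve tightening. First, in bounding $\phi''$ it helps to note that $\mathcal{L}_{\textup{aug}}$ is \emph{linear} in $\lambda$ and $\nu$, so the Hessian has no $(\lambda,\lambda)$, $(\nu,\nu)$ or $(\lambda,\nu)$ blocks; the only potentially troublesome contributions are the $\rho$-weighted $d_z$--$d_z$, $d_z$--$d_s$ and $d_s$--$d_s$ terms, and these are exactly the ones that must be paired with the $-\rho(\|g+s\|^2+\|h\|^2)$ piece inside $\phi'(0)$. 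Second, \Cref{lemma:unbounded_rho} only bounds $\rho\|\dz\|^2$ and $\rho\|(g+s;h)\|$ at the iterations $i_l$ where $\rho$ is increased; on the intervening iterations the bound has to come from the merit-function decrease bookkeeping (this is where \Cref{lemma:technical_unbounded} enters), and you should state that dependence rather than merely allude to it.

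Your ``coarser self-contained alternative'' is not admissible here: the results of \Cref{sec:t_1} (\Cref{lemma:t1_1}, \Cref{lemma:t1_2}) require \Cref{ass:7} and, more importantly, rely on the convergence $z\ii\to z^\star$, $\lambda\ii\to\lambda^\star$, $\nu\ii\to\nu^\star$ established in \Cref{th:convergence} and \Cref{th:primal_dual}, both of which already invoke \Cref{lemma:phi_property}. Moreover, \Cref{lemma:active_set}(ii) only tells you that a small $\norm{\dz\ii}$ identifies the correct active set; it does not place $(z\ii,\lambda\ii,\nu\ii)$ in a neighborhood of $(z^\star,\lambda^\star,\nu^\star)$, which is what the $o(\norm{\dz}^2)$ estimates in \Cref{sec:t_1} actually need. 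So the case split introduces a circular dependence and should be dropped; the uniform lower bound on $t\ii$ has to come entirely from the curvature/Armijo estimate along the lines of \cite{gill_SQP_theory}.
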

 
 
We are now ready to state the main result of the paper, that is, the global convergence of our proposed algorithm.

\begin{theorem}
\label{th:convergence}
\Cref{alg:nonlinear_algorithm} is such that
$ \displaystyle \lim_{i\rightarrow \infty} \norm{\dz\ii} = 0. $
{\hfill $\square$}
\end{theorem}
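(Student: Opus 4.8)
The plan is to follow the global-convergence proof of \cite[Theorem~4.10]{gill_SQP_theory}, the only structural change being that its QP Hessian $H\ii$ is replaced by $\frac{1}{\alpha\ii}I$ and its descent test \eqref{eq:condition_eta_SQP} by \eqref{eq:condition_phi_PM}. If $\dz\ii=0$ at some finite iteration then \cref{alg:nonlinear_algorithm} terminates and the claim holds by part~(i) of \cref{lemma:active_set}, so assume the generated sequence is infinite (hence $\dz\ii\neq 0$ for all $i$). The argument then splits into two cases, according to whether the penalty parameter sequence $(\rho\ii)_i$ is bounded.

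\emph{Bounded penalty parameter.} If $\rho$ is increased only finitely often, there are $i_\star\in\N$ and $\rho_\star\geq 0$ with $\rho\ii=\rho_\star$ for all $i\geq i_\star$. Set $F\ii:=\mathcal{L}_\textup{aug}(z\ii,\lambda\ii,\nu\ii,s\ii,\rho_\star)$ with $s\ii$ the slack from \eqref{eq:slack_s}. Since that slack minimizes $\mathcal{L}_\textup{aug}$ over $s\geq 0$, recomputing $s\ip$ at iteration $i+1$ cannot increase the merit value, so $F\ip\leq\phi(t\ii)$. Combining this with \cref{lemma:phi_property} ($\phi(t\ii)-\phi(0)\leq\sigma_1 t\ii\phi'(0)$, and $t\ii>\bar{t}$ for a fixed $\bar{t}>0$) and with the choice of $\rho$ in \eqref{eq:condition_phi_PM} (admissible by \cref{lemma:tune_eta}, whence $\phi'(0)\leq-\frac{1}{2\alpha\ii}\norm{\dz\ii}^2$) yields, for all $i\geq i_\star$,
\[
F\ip\;\leq\;F\ii+\sigma_1 t\ii\,\phi'(0)\;\leq\;F\ii-\frac{\sigma_1\bar{t}}{2\alpha\ii}\,\norm{\dz\ii}^2 .
\]
As $\alpha\ii$ is bounded above, $F$ decreases by at least $c\,\norm{\dz\ii}^2$ per step for a fixed $c>0$, and $(F\ii)_{i\geq i_\star}$ is bounded below since $\Omega$ is compact with $J,g,h$ bounded on it (\cref{ass:4}, \cref{ass:5}), $\lambda\ii,\nu\ii$ are bounded (\cref{lemma:dual_bounded}), and $s\ii$ is bounded via \eqref{eq:slack_s}. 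Telescoping from $i_\star$ gives $\sum_{i\geq i_\star}\norm{\dz\ii}^2<\infty$, hence $\norm{\dz\ii}\to 0$.

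\emph{Unbounded penalty parameter.} Suppose instead $\rho$ is increased at an infinite index set $\{i_l\}_l$. At each such iteration $\rho$ at least doubles, and once it has increased it is strictly positive (since $\dz\ir\neq 0$, \eqref{eq:must_increase_eta_rho} forces $[\dl\ir;\dn\ir]\neq 0$, so $\hat{\rho}\ir>0$ in \eqref{eq:rho_hat}); therefore $\rho\ir\to\infty$ as $l\to\infty$. The iterations decompose into finite epochs $[i_l,i_{l+1})$, and \cref{lemma:technical_unbounded} furnishes $M>0$ with $\rho\ir\sum_{i=i_l}^{i_{l+1}-1}\norm{t\ii\dz\ii}^2<M$ for every $l$. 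For arbitrary $i$, choosing $l$ with $i_l\leq i<i_{l+1}$ gives $\norm{t\ii\dz\ii}^2\leq M/\rho\ir$; letting $i\to\infty$ forces the epoch index $l\to\infty$, so $\rho\ir\to\infty$, $\norm{t\ii\dz\ii}\to 0$, and, since $t\ii>\bar{t}>0$ by \cref{lemma:phi_property}, again $\norm{\dz\ii}\to 0$.

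The delicate case is the unbounded one: the telescoping estimate of the bounded case is unavailable there because the merit function jumps upward at every penalty increase, so the decay of $\norm{\dz\ii}$ has to be read off directly from the quantitative bound of \cref{lemma:technical_unbounded}, which itself relies on \cref{lemma:unbounded_rho}. Both of those lemmas are imported, with only routine modifications, from \cite{gill_SQP_theory}; the remaining work is to check that replacing $H\ii$ by $\frac{1}{\alpha\ii}I$ and \eqref{eq:condition_eta_SQP} by \eqref{eq:condition_phi_PM} does not invalidate the inequalities their proofs use — which it does not, because $\frac{1}{\alpha\ii}I$ is uniformly positive definite ($\alpha\ii$ being bounded) and \eqref{eq:condition_phi_PM} is precisely the descent property standing in for \eqref{eq:condition_eta_SQP}.
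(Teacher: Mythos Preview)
Your proof is correct and follows essentially the same two-case structure as the paper's own argument (bounded vs.\ unbounded penalty parameter), invoking \cref{lemma:tune_eta}, \cref{lemma:phi_property}, \cref{lemma:technical_unbounded}, and \cref{lemma:dual_bounded} in the same roles. The only cosmetic difference is that in the bounded-$\rho$ case you telescope to obtain $\sum_i\norm{\dz\ii}^2<\infty$, whereas the paper argues by contradiction that a uniformly positive $\norm{\dz\ii}$ would drive the merit function to $-\infty$; these are equivalent renderings of the same descent estimate.
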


\begin{proof}
The proof is similar to \cite[Proof of Theorem 4.1]{gill_SQP_theory}. 
If $\norm{\dz\ii} = 0$ for a finite $i$, then the algorithm terminates and the statement is true. We assume in the following that $\norm{\dz\ii} \neq 0$ for all $i \in \N$. 

If there is no upper bound on $\rho$, then the uniform lower bound $\bar t>0$ {from} \Cref{lemma:phi_property} and \eqref{eq:technical_bounded_2} implies that, for all $\delta > 0$, there exists $\tilde i \in \N$ such that
$
\norm{\dz\ii} \leq \delta \text{ for all } i \geq \tilde{i},
$
{which} proves the statement. 

In the bounded case, there exists a value $\tilde \rho$ and an index $\tilde i$ such that $\rho = \tilde \rho$ for all $i \geq \tilde i$.
The proof is then by contradiction. We assume that there exist $\epsilon >0$ and $\tilde{i} \in \N$ such that $\norm{\dz\ii} > \epsilon$ for all $i \geq \tilde{i}$.
Now, every subsequent iteration must yield a decrease in the merit function in \eqref{eq:augmented_lagrangian} with $\rho = \tilde \rho$, since because of \eqref{eq:Wolfe_condition}, 
{\Cref{lemma:tune_eta} and} \Cref{lemma:phi_property}, we have
$$
\phi (t) - \phi(0) \leq \sigma_1 t \phi'(0) \leq - \frac{1 }{2 \alpha} \sigma_1 \bar{t} \varepsilon ^2 < 0.
$$
%
The addition of the slack variable $s$ in \eqref{eq:slack_s} can only lead to a further reduction in the merit function. Therefore, since the merit function with $\rho = \tilde \rho$ decreases by at least a fixed quantity at every iteration, it must be unbounded from below.
Since by \Cref{lemma:dual_bounded} the dual variables $\lambda$ and $\nu$ are bounded, the merit function in \eqref{eq:augmented_lagrangian} can be unbounded from below only if the objective, or the constraints functions, are unbounded from below. {This} leads to a contradiction, since due to Standing Assumptions \ref{ass:4} and \ref{ass:5} all the iterates lie in a region $\Omega$, where the objective and constraints functions are bounded in norm. 
Therefore, the result follows.
\end{proof}

\smallskip

\begin{theorem}
\label{th:primal_dual}
It holds that the primal and dual iterates in \Cref{alg:nonlinear_algorithm} converge to the KKT triple associated to a critical point $z^\star$ of \eqref{eq:original_problem}.  That is:
\begin{equation*}
\lim_{i\rightarrow \infty} \norm{z\ii - z^\star} =  \
\lim_{i\rightarrow \infty} \norm{\lambda\ii - \lambda^\star} =  \ 
\lim_{i\rightarrow \infty} \norm{\nu\ii - \nu^\star} = 0.
\end{equation*}
{\hfill $\square$}
\end{theorem}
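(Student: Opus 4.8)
The plan is to build on \Cref{th:convergence}, which guarantees $\norm{\dz\ii}\to 0$, together with the compactness in \Cref{ass:4} and the active-set identification in \Cref{lemma:active_set}(ii). First I would argue that, since all iterates $z\ii$ lie in the compact set $\Omega$, the sequence $\left(z\ii\right)_i$ has at least one accumulation point $\bar z$. Along a convergent subsequence $z^{(i_k)}\to\bar z$, the KKT conditions \eqref{eq:dz_KKT} hold with $\norm{\dz^{(i_k)}}\to 0$ and, by \Cref{lemma:dual_bounded}, the multipliers $\lambda\G^{(i_k)},\nu\G^{(i_k)}$ remain bounded, so (passing to a further subsequence) they converge to some $\bar\lambda\ge 0$, $\bar\nu$. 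Taking the limit in \eqref{eq:dz_KKT} and using continuity of $J,g,h$ and their gradients on $\Omega$, the limit triple $(\bar z,\bar\lambda,\bar\nu)$ satisfies \eqref{eq:FOC}, i.e. it is a KKT triple of \eqref{eq:original_problem}; strict complementarity at $\bar z$ follows from \Cref{ass:6} via \Cref{lemma:active_set}(i). Hence every accumulation point of $\left(z\ii\right)_i$ is a critical point of \eqref{eq:original_problem}.

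Next I would upgrade this subsequential statement to full convergence to a single $z^\star$. Since \eqref{eq:original_problem} is assumed to have a finite number of critical points, the accumulation points of $\left(z\ii\right)_i$ form a finite set; and because $\norm{z\ip - z\ii} = t\ii\norm{\dz\ii}\le\norm{\dz\ii}\to 0$, consecutive iterates get arbitrarily close, which for a bounded sequence with finitely many accumulation points forces convergence to exactly one of them — call it $z^\star$. (The standard argument: if two distinct critical points were both accumulation points, the iterates would have to jump between disjoint neighborhoods infinitely often, contradicting $\norm{z\ip-z\ii}\to 0$.) This gives $\lim_{i\to\infty}\norm{z\ii - z^\star}=0$.

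Finally, for the dual variables I would invoke \Cref{th:gen_rec_alg}: since $z\ii\to z^\star$ and, by \Cref{lemma:active_set}(ii), the active set $\mathcal{I}\G(\dz\ii,z\ii)$ eventually equals the active set at $z^\star$, the dual variables $\lambda\G\ii,\nu\G\ii$ solving \eqref{eq:dz_KKT} are given by the continuous function $Z$ evaluated at $w\ii$, so $\lambda\G\ii\to\lambda^\star$ and $\nu\G\ii\to\nu^\star$. To pass from $\lambda\G\ii,\nu\G\ii$ to the actual iterates $\lambda\ii,\nu\ii$ I would use the convex-combination update in \eqref{eq:dual_def_2}: writing $\lambda\ip - \lambda^\star = (1-t\ii)(\lambda\ii-\lambda^\star) + t\ii(\lambda\G\ii-\lambda^\star)$ with $t\ii\ge\bar t>0$ from \Cref{lemma:phi_property}, a standard contraction/averaging estimate shows $\norm{\lambda\ii-\lambda^\star}\to 0$ (the error is a weighted average of terms that all tend to zero, with the persistently-excited weight $t\ii\ge\bar t$ preventing the old error from sticking), and analogously for $\nu\ii$.

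The main obstacle I anticipate is the last step — promoting convergence of the auxiliary multipliers $\lambda\G\ii,\nu\G\ii$ to convergence of the running iterates $\lambda\ii,\nu\ii$ — because the update is only a damped step, so one must carefully exploit the uniform lower bound $t\ii\ge\bar t$ to kill the accumulated discrepancy; a secondary subtlety is ensuring the accumulation point is unique, which relies essentially on the finiteness-of-critical-points hypothesis together with $\norm{z\ip-z\ii}\to 0$.
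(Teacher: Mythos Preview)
Your proposal is correct and follows essentially the same route as the paper. The paper's appendix proof (which defers the primal part to \cite[Corollary 4.1]{gill_SQP_theory}) simply asserts $z\ii\to z^\star$ and then handles the dual variables by unrolling the recursion \eqref{eq:dual_def_2} into the explicit convex combination $\lambda\ip=\sum_{k=0}^i\gamma_{k,i}\lambda\G\kk$ with weights $\gamma_{k,i}\leq(1-\bar t)^{i-k}$, splitting the sum at a late index $\tilde i$; your recursion $e_{i+1}\leq(1-\bar t)e_i+\|\lambda\G\ii-\lambda^\star\|$ is the same estimate in unexpanded form. Your primal argument (compactness, every accumulation point is critical via the limit of \eqref{eq:dz_KKT}, then finiteness of critical points plus $\norm{z\ip-z\ii}\leq\norm{\dz\ii}\to 0$ forcing a unique limit) is more self-contained than the paper's deferral to \cite{gill_SQP_theory}, and is exactly the standard mechanism behind that corollary.
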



\begin{proof}
The proof follows the line of \cite[Proofs of Corollary 4.1 and Theorem 4.2]{gill_SQP_theory}. See \Cref{app:proofs} for the proof details.
\end{proof}

\subsection{Asymptotic linear convergence}
\label{sec:t_1}
In this section we show that step sizes $t=1$ are not precluded by the Wolfe conditions in \eqref{eq:Wolfe_condition} when the iterates are sufficiently close to the solution. Therefore local convergence at a linear rate (\Cref{th:local_conv}) can be recovered.
The following standard SQP assumption is considered in the analysis \cite{gill_SQP_theory, powell1986recursive}.
\begin{assumption}
\label{ass:7}
For all sufficiently large $i$, the following holds:
\begin{equation*}
\begin{split}
z\ii + \dz\ii - z^\star &= o 	\left(\norm{z\ii - z^\star} \right) \\
\lambda\ii + \dl\ii - \lambda^\star &= o \left(\norm{\lambda\ii - \lambda^\star} \right) \\
\nu\ii + \dn\ii - \nu^\star &= o \left(\norm{\nu\ii - \nu^\star} \right) \\
  \left[ \dl\ii; \dn\ii\right]  &= \mc{O} \left( \norm{\dz\ii} \right).  \\
\end{split}
\end{equation*}
{\hfill $\square$}
\end{assumption}

\smallskip

This assumption implies that 
$$
\norm{ \dz\ii } \sim  \norm{z\ii - z^\star}, \, 
\norm{ \dl\ii } \sim  \norm{\lambda\ii - \lambda^\star}, \, 
\norm{ \dn\ii } \sim  \norm{\nu\ii - \nu^\star}
$$
{
where the notation ``$\sim$'' indicates that the quantities are of similar order as $i$ approaches infinity.
}
{Next we show} that the penalty parameter $\rho$ is bounded.
\begin{lem}
\label{lemma:rho_bounded}
{If} \Cref{ass:7} {holds}, {then} there exists a finite $\rho$ such that $\rho\ii \leq \bar \rho$ for all $i$.
{\hfill $\square$}
\end{lem}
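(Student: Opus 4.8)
The plan is to show that the tuning rule for $\rho$ can only trigger an increase finitely many times, which immediately gives a finite upper bound $\bar\rho$. Recall from \Cref{lemma:tune_eta} and \eqref{eq:rho_hat} that an increase at iteration $i$ requires \eqref{eq:must_increase_eta_rho} to hold, namely
\begin{equation*}
2 \left[ \begin{matrix} (g+s)^\top & h^\top \end{matrix} \right] \left[ \begin{matrix} \dl\ii \\ \dn\ii \end{matrix} \right] \geq \frac{1}{2\alpha\ii} \norm{\dz\ii}^2,
\end{equation*}
and in that case the new value is bounded by $\max\{\hat\rho\ii, 2\rho\im\}$ with $\hat\rho\ii$ as in \eqref{eq:rho_hat}. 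So the first step is to bound $\hat\rho\ii$ uniformly under \Cref{ass:7}. Using the Cauchy--Schwarz estimate as in the proof of \Cref{lemma:tune_eta}, the triggering condition forces
\begin{equation*}
\norm{ \left[ \begin{array}{c} g(z\ii)+s\ii \\ h(z\ii) \end{array} \right] } \geq \frac{1}{4\alpha\ii} \frac{\norm{\dz\ii}^2}{\norm{\left[ \begin{smallmatrix} \dl\ii \\ \dn\ii \end{smallmatrix} \right]}},
\end{equation*}
whence, substituting into \eqref{eq:rho_hat},
\begin{equation*}
\hat\rho\ii \leq 8 \alpha\ii \frac{\norm{\left[ \begin{smallmatrix} \dl\ii \\ \dn\ii \end{smallmatrix} \right]}^2}{\norm{\dz\ii}^2}.
\end{equation*}

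The key step is now to observe that the last relation of \Cref{ass:7}, $[\dl\ii;\dn\ii] = \mc{O}(\norm{\dz\ii})$, gives exactly $\norm{[\dl\ii;\dn\ii]}^2 \leq c \norm{\dz\ii}^2$ for some constant $c>0$ and all sufficiently large $i$; combined with the boundedness of the step sizes $\alpha\ii$ (they are chosen in a bounded range $\R_{>0}$, and for local convergence bounded above as in \Cref{th:local_conv}), this yields $\hat\rho\ii \leq 8 c\, \bar\alpha =: \rho_{\infty}$ for all sufficiently large $i$, say $i \geq i_0$. Hence for $i \geq i_0$ the update rule either leaves $\rho$ unchanged or sets $\rho\ii = \max\{\hat\rho\ii, 2\rho\im\}$; since $\hat\rho\ii$ is capped at $\rho_\infty$, once $\rho$ exceeds $\rho_\infty$ any further increase could only come from the $2\rho\im$ branch, but that branch is only taken when \eqref{eq:condition_phi_PM} fails, which by \Cref{lemma:tune_eta} cannot happen once $\rho\im \geq \hat\rho\ii$, i.e. once $\rho\im \geq \rho_\infty$. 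Therefore $\rho$ can be doubled at most a finite number of times beyond $\rho_\infty$, and in fact stabilizes; together with the finitely many increases before $i_0$ (each of which is finite), this gives a uniform bound $\bar\rho := \max\{ \rho^{(i_0)}, \rho_\infty, 2\rho_\infty, \ldots \}$ — more precisely $\bar\rho = \max\{\rho^{(i_0-1)}, 2\rho_\infty\}$ — valid for all $i$.

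I expect the main obstacle to be the careful bookkeeping around the doubling branch $\max\{\hat\rho\ii, 2\rho\im\}$: one must argue that although the rule can in principle double $\rho\im$, this branch is only activated when the descent condition \eqref{eq:condition_phi_PM} fails for the current $\rho\im$, and \Cref{lemma:tune_eta} guarantees that failure implies $\rho\im < \hat\rho\ii \leq \rho_\infty$, so $2\rho\im < 2\rho_\infty$. Thus the sequence $(\rho\ii)_i$ is nondecreasing, bounded above by $\max\{\rho^{(i_0-1)}, 2\rho_\infty\}$, and hence converges; this is exactly the statement. A secondary technical point is justifying uniform bounds on $\alpha\ii$ and on the constant $c$ in $\norm{[\dl\ii;\dn\ii]} \le \sqrt{c}\norm{\dz\ii}$ — the former from the algorithm's step-size selection and the latter from \Cref{ass:7} holding for all sufficiently large $i$ — but these are routine given the standing assumptions.
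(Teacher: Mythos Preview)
Your proof is correct and follows essentially the same approach as the paper: both hinge on the bound $\hat\rho\ii \leq 8\alpha\ii \norm{[\dl\ii;\dn\ii]}^2/\norm{\dz\ii}^2$ combined with the last relation of \Cref{ass:7} to cap $\hat\rho\ii$ uniformly. The only stylistic difference is that the paper argues by contradiction (assuming $\rho$ unbounded and invoking \Cref{lemma:unbounded_rho} to reach a contradiction), whereas you argue directly and handle the doubling branch explicitly; your bookkeeping there---that failure of \eqref{eq:condition_phi_PM} forces $\rho\im < \hat\rho\ii \leq \rho_\infty$, hence $\rho\ii \leq 2\rho_\infty$---is sound.
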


\smallskip


\begin{proof}
The proof follows the same argument as \cite[Lemma 5.1]{gill_SQP_theory}.
Assume that the parameter $\rho$ is unbounded. Then, by \Cref{lemma:tune_eta}, the condition in \eqref{eq:must_increase_eta_rho} must hold over an infinite subsequence of iterations. Thus, using simplified notation,
\begin{equation*}
 \norm{\left[ \begin{matrix} g+s \\ h \end{matrix} \right]}  \geq \frac{1}{4 \alpha} \frac {\norm{\dz}^2}{\norm{\left[ \begin{matrix} \dl \\ \dn \end{matrix} \right]}}, 
\end{equation*} 
and in turn, by \Cref{ass:7}, there exist a constant $M$ such that:
\begin{equation*}
\frac{\norm{\left[ \begin{matrix} g+s \\ h \end{matrix} \right]}}{\norm{\left[ \begin{matrix} \dl \\ \dn \end{matrix} \right]}}  \geq \frac{1}{4 \alpha} \frac {\norm{\dz}^2}{\norm{\left[ \begin{matrix} \dl \\ \dn \end{matrix} \right]}^2} = \frac{M}{4 \alpha} > 0,
\end{equation*}
for all sufficiently large iterations $i$, hence the constraints are bounded from below in norm. By \Cref{lemma:unbounded_rho}, the penalty parameter $\rho$ must be bounded over the infinite subsequence of iterations, contradicting the unboundedness assumption. 
\end{proof}

\smallskip

\begin{lem}
\label{lemma:t1_1}
Under \Cref{ass:7}, the condition in  \eqref{eq:Wolfe_condition_2} holds with step size $t=1$ for sufficiently large $i$, i.e.:
\begin{equation*}
\phi(1) - \phi(0) \leq \sigma_1 \phi'(0),
\end{equation*}
where $0 < \sigma_1 < \frac12$.
{\hfill $\square$}
\end{lem}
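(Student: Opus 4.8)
The plan is to expand $\phi(1)-\phi(0)$ to second order about $t=0$, to evaluate the first two derivatives through the KKT identities already used for \Cref{lemma:tune_eta}, and to show that the leftover is dominated by $\norm{\dz\ii}^2$ with a small enough constant that, given $\sigma_1<\tfrac12$ and the margin built into \eqref{eq:condition_phi_PM}, the Armijo inequality \eqref{eq:Wolfe_condition_1} survives at $t=1$. Since $J,g,h$ are $C^2$, so are $\mathcal{L}_{\textup{aug}}$ and hence $\phi$, and I would write $\phi(1)-\phi(0)=\phi'(0)+\tfrac12\phi''(0)+R\ii$ with $R\ii:=\int_0^1(1-t)\bigl(\phi''(t)-\phi''(0)\bigr)\,dt$. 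First I would argue $R\ii=o(\norm{\dz\ii}^2)$: by \Cref{th:convergence} and \Cref{th:primal_dual} the increments $\dz\ii,\dl\ii,\dn\ii$ tend to $0$; by \eqref{eq:ds_def} and \Cref{lemma:active_set} (which makes the active set of \eqref{eq:proj_opt} eventually coincide with the one active at $z^\star$, so that $g(z\ii)+s\ii$ and $h(z\ii)$ vanish at rate $O(\norm{\dz\ii})$ as $z\ii\to z^\star$, $\lambda\ii\to\lambda^\star$) also $\ds\ii\to0$, and the whole increment has norm $O(\norm{\dz\ii})$ — the bound on $\dl\ii,\dn\ii$ being the last line of \Cref{ass:7}. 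Since $\phi''(t)$ is the quadratic form of $\nabla^2\mathcal{L}_{\textup{aug}}$ along this increment, $\nabla^2\mathcal{L}_{\textup{aug}}$ is uniformly continuous on $\Omega$ (Standing Assumptions \ref{ass:4}--\ref{ass:5}) and $\rho$ is bounded (\Cref{lemma:rho_bounded}), uniform continuity along the segment gives $|R\ii|=o(\norm{\dz\ii}^2)$.

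Next I would compute the two derivatives. For $\phi'(0)$ I reuse the computation in the proof of \Cref{lemma:tune_eta}: from \eqref{eq:dz_KKT} and the identities \eqref{eq:auxiliary_terms_1}--\eqref{eq:auxiliary_terms_2}, writing $g\ii:=g(z\ii)$, $h\ii:=h(z\ii)$,
\[
\phi'(0)=-\tfrac{1}{\alpha\ii}\norm{\dz\ii}^2+(\ds\ii)^\top\lambda\G\ii+2\bigl[(g\ii+s\ii)^\top\dl\ii+(h\ii)^\top\dn\ii\bigr]-\rho\bigl(\norm{g\ii+s\ii}^2+\norm{h\ii}^2\bigr),
\]
with $(\ds\ii)^\top\lambda\G\ii\le0$ by complementarity. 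Differentiating the gradient of $\mathcal{L}_{\textup{aug}}$ displayed in that proof once more along the increment and again substituting \eqref{eq:auxiliary_terms}, the slack and penalty cross-terms telescope and I would obtain
\[
\phi''(0)=(\dz\ii)^\top\nabla^2\Lagr(z\ii,\lambda\ii,\nu\ii)\,\dz\ii+\rho\bigl(\norm{g\ii+s\ii}^2+\norm{h\ii}^2\bigr)-2\bigl[(g\ii+s\ii)^\top\dl\ii+(h\ii)^\top\dn\ii\bigr]+o(\norm{\dz\ii}^2).
\]

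Finally I would combine: $\phi(1)-\phi(0)-\sigma_1\phi'(0)=(1-\sigma_1)\phi'(0)+\tfrac12\phi''(0)+o(\norm{\dz\ii}^2)$, and substitute. The coefficient multiplying $\rho(\norm{g\ii+s\ii}^2+\norm{h\ii}^2)$ is $\sigma_1-\tfrac12<0$ and the coefficient multiplying the nonpositive $(\ds\ii)^\top\lambda\G\ii$ is $1-\sigma_1>0$, so both contributions may be discarded. Using $\nabla^2\Lagr(z\ii,\lambda\ii,\nu\ii)\to\nabla^2\Lagr(z^\star,\xi^\star)$, $\norm{(\dl\ii,\dn\ii)}=O(\norm{\dz\ii})$ (\Cref{ass:7}), $\norm{(g\ii+s\ii,\,h\ii)}=O(\norm{\dz\ii})$ (\Cref{lemma:active_set} and Lipschitz continuity of the data and of $s\ii$ in $(z\ii,\lambda\ii)$), and $\rho\le\bar\rho$ (\Cref{lemma:rho_bounded}), Cauchy--Schwarz on the remaining cross term yields a constant $K>0$, depending only on the data at $(z^\star,\xi^\star)$ and on $\bar\rho$, with
\[
\phi(1)-\phi(0)-\sigma_1\phi'(0)\le\Bigl(-\tfrac{1-\sigma_1}{\alpha\ii}+K+o(1)\Bigr)\norm{\dz\ii}^2 .
\]
Hence, as soon as the step sizes obey an additional bound $\alpha\ii\le(1-\sigma_1)/K$ — of the same nature as the upper bound on $\alpha$ already required for the local rate in \Cref{th:local_conv} — the right-hand side is negative for all sufficiently large $i$, which proves the claim.

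The main obstacle is precisely this last bookkeeping step. Unlike the exact-Hessian SQP of \cite{gill_SQP_theory}, where $H\ii\to\nabla^2\Lagr$ forces $\phi''(0)\approx-\phi'(0)$ and the ratio $(\phi(1)-\phi(0))/\phi'(0)$ tends to $\tfrac12$ automatically, here $H\ii=\tfrac1{\alpha\ii}I$ does not match $\nabla^2\Lagr$, and the terms $(\dz\ii)^\top\nabla^2\Lagr\,\dz\ii$ and $(g\ii+s\ii)^\top\dl\ii+(h\ii)^\top\dn\ii$ are genuinely $\Theta(\norm{\dz\ii}^2)$ (the constraint residuals and the dual increments do not vanish faster than $\norm{\dz\ii}$ at a generic critical point). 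One therefore has to keep the constants explicit and rely on $\alpha$ being small; everything else reduces to the KKT algebra already carried out for \Cref{lemma:tune_eta}.
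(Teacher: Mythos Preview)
Your second–order Taylor expansion of $\phi$ and your formula for $\phi''(0)$ are fine, and the overall strategy is a legitimate alternative to the paper's ``trapezoidal'' expansion of $J,g,h$. The gap is in the last step, where you declare the residual terms to be ``genuinely $\Theta(\norm{\dz\ii}^2)$'' and therefore impose the extra hypothesis $\alpha\ii\le(1-\sigma_1)/K$. That hypothesis is not part of the lemma, so as written you prove a strictly weaker statement.

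The point you are missing is that you use only the \emph{last} line of \Cref{ass:7}, namely $\norm{(\dl\ii,\dn\ii)}=O(\norm{\dz\ii})$, and ignore the first three superlinear-convergence conditions. Those are exactly what kill the residual. From $\lambda\ii+\dl\ii-\lambda^\star=o(\norm{\lambda\ii-\lambda^\star})$ together with $\norm{\dl\ii}\sim\norm{\lambda\ii-\lambda^\star}$ and $\norm{\dl\ii}=O(\norm{\dz\ii})$ one gets $\lambda\G\ii-\lambda^\star=o(\norm{\dz\ii})$, and likewise $\nu\G\ii-\nu^\star=o(\norm{\dz\ii})$. Combined with the first KKT relation in \eqref{eq:dz_KKT}, i.e.\ $\dz\ii=-\alpha\ii\nabla\Lagr(z\ii,\lambda\G\ii,\nu\G\ii)$, a first-order expansion around $(z^\star,\lambda^\star,\nu^\star)$ and $z\ii-z^\star=-\dz\ii+o(\norm{\dz\ii})$ (first line of \Cref{ass:7}) yield
\[
\bigl(I-\alpha\ii\nabla^2\Lagr(z^\star,\lambda^\star,\nu^\star)\bigr)\,\dz\ii \;=\; o(\norm{\dz\ii}),
\]
hence $(\dz\ii)^\top\nabla^2\Lagr\,\dz\ii-\tfrac{1}{\alpha\ii}\norm{\dz\ii}^2=o(\norm{\dz\ii}^2)$. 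In other words, under \Cref{ass:7} the Hessian \emph{does} match $\tfrac{1}{\alpha\ii}I$ along the direction $\dz\ii$, contrary to your last paragraph. Plugging this back into your expression for $\tfrac12\phi'(0)+\tfrac12\phi''(0)$ collapses the whole thing to $\tfrac12\lambda\G^{\,\top}\ds+o(\norm{\dz\ii}^2)\le o(\norm{\dz\ii}^2)$, which is precisely the paper's conclusion $\phi(1)-\phi(0)\le\tfrac12\phi'(0)+o(\norm{\dz\ii}^2)$. From there the inequality with any $\sigma_1<\tfrac12$ follows for large $i$ via \eqref{eq:condition_phi_PM}, with no restriction on $\alpha\ii$ beyond boundedness. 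The paper obtains the same cancellation differently, by writing $J(z+\dz)-J(z)=\tfrac12(\nabla J(z)+\nabla J(z^\star))^\top\dz+o(\norm{\dz}^2)$ (and similarly for $g,h$) and then recognising $\nabla J^\star+\nabla g^\star\lambda\G+\nabla h^\star\nu\G=o(\norm{\dz})$; but your route works too once you exploit \Cref{ass:7} in full.
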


\smallskip

\begin{proof}
The proof is similar to \cite[Lemma 5.2]{gill_SQP_theory} and \cite[Lemma 4.2]{powell1986recursive}, see \Cref{app:proofs} for the proof details. 
\end{proof}

\smallskip
\begin{lem}
\label{lemma:t1_2}
Under \Cref{ass:7}, the condition in  \eqref{eq:Wolfe_condition_1} holds with step size $t=1$ for sufficiently large $i$, i.e.:
\begin{equation*}
|\phi'(1) | \leq \sigma_2 |\phi'(0)|
\end{equation*}
where $\sigma_2 < \frac12$.
{\hfill $\square$}
\end{lem}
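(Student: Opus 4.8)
The plan is to show that the merit‑function derivative along the search direction decays one order faster at the full step than at the origin, namely $\phi'(1)=o(\norm{\dz\ii}^2)$, while the tuning condition \eqref{eq:condition_phi_PM} forces $|\phi'(0)|\geq \tfrac{1}{2\alpha\ii}\norm{\dz\ii}^2$; since the step sizes $\alpha\ii$ are bounded from above, say $\alpha\ii\le\bar\alpha$, the ratio $|\phi'(1)|/|\phi'(0)|$ then tends to $0$, which yields $|\phi'(1)|\le\sigma_2|\phi'(0)|$ for all large $i$ and any fixed $\sigma_2>0$, in particular $\sigma_2<\tfrac12$. We work in the regime where the algorithm has not terminated, so $\norm{\dz\ii}>0$; by \Cref{th:primal_dual} we have $z\ii\to z^\star$, $\lambda\ii\to\lambda^\star$, $\nu\ii\to\nu^\star$ and $\norm{\dz\ii}\to0$; by \Cref{lemma:rho_bounded} the penalty parameter satisfies $\rho\ii\le\bar\rho$; by \Cref{lemma:active_set}(ii) together with \Cref{th:gen_rec_alg} the active index set $\mathcal{I}\G(\dz\ii,z\ii)$ in \eqref{eq:our-active-set} eventually coincides with the set of constraints active at $z^\star$ (with $\lambda_j^\star>0$ on it by strict complementarity, $g_j(z^\star)<0$ and $\lambda_j^\star=0$ off it); and \Cref{ass:7} gives $\norm{\dz\ii}\sim\norm{z\ii-z^\star}$, $\norm{\dl\ii}\sim\norm{\lambda\ii-\lambda^\star}$, $\norm{\dn\ii}\sim\norm{\nu\ii-\nu^\star}$, all $\mc{O}(\norm{\dz\ii})$, and $\lambda\ii+\dl\ii-\lambda^\star=o(\norm{\dz\ii})$, and likewise for $\nu$ and $z$.

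Next I would evaluate $\phi'(1)$ via the chain rule as the gradient of $\mathcal{L}_\textup{aug}$ (computed explicitly in the proof of \Cref{lemma:tune_eta}), taken at the updated point $(z+\dz,\lambda+\dl,\nu+\dn,s+\ds)$ and contracted with $(\dz,\dl,\dn,\ds)$, dropping the superscript $(i)$. Two residuals at the updated point are the crux: by \eqref{eq:ds_def} one has $g(z+\dz)+(s+\ds)=g(z+\dz)-g(z)-\nabla g(z)^\top\dz$, and by the last equation of \eqref{eq:dz_KKT} one has $h(z+\dz)=h(z+\dz)-h(z)-\nabla h(z)^\top\dz$; a second‑order Taylor expansion of $g$ and $h$, which are twice continuously differentiable with second derivatives bounded on $\Omega$ (\Cref{ass:5}), makes both $\mc{O}(\norm{\dz}^2)$. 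Moreover, the Lagrangian‑gradient block equals $\nabla\Lagr(z+\dz,\lambda+\dl,\nu+\dn)=o(\norm{\dz})$, because $\nabla\Lagr(z^\star,\lambda^\star,\nu^\star)=0$, $\nabla\Lagr$ is Lipschitz on $\Omega$ (again \Cref{ass:5}), and by \Cref{ass:7} its arguments differ from $(z^\star,\lambda^\star,\nu^\star)$ by $o(\norm{\dz})$. Using $\rho\le\bar\rho$ and boundedness of $\nabla g,\nabla h$ on $\Omega$, the first block of $\nabla\mathcal{L}_\textup{aug}$ at the updated point is thus $o(\norm{\dz})+\mc{O}(\norm{\dz}^2)=o(\norm{\dz})$, so its pairing with $\dz=\mc{O}(\norm{\dz})$ is $o(\norm{\dz}^2)$; the second and third blocks are the residuals above, i.e.\ $\mc{O}(\norm{\dz}^2)$, and their pairings with $\dl=\mc{O}(\norm{\dz})$ and $\dn=\mc{O}(\norm{\dz})$ are $\mc{O}(\norm{\dz}^3)=o(\norm{\dz}^2)$.

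The remaining, and most delicate, term is the slack block $\ds^\top\big(\lambda+\dl+\rho\,(g(z+\dz)+s+\ds)\big)$, delicate because $\lambda+\dl\to\lambda^\star\neq0$ in general. I would split the sum over the index set active at $z^\star$ and its complement. For $j$ active at $z^\star$: strict complementarity ($\lambda_j^\star>0$) forces $s_j\ii=0$ for large $i$ through \eqref{eq:slack_s}, and then $\ds_j\ii=-(g_j(z\ii)+s_j\ii)-\nabla g_j(z\ii)^\top\dz\ii=0$ since $j\in\mathcal{I}\G(\dz\ii,z\ii)$ (by \Cref{lemma:active_set}(ii)), so these terms vanish. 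For $j$ inactive at $z^\star$: $\lambda_j^\star=0$, hence $\lambda_j\ii+\dl_j\ii=o(\norm{\dz})$ by \Cref{ass:7}, while \eqref{eq:ds_def} gives $\ds_j\ii=\mc{O}(\norm{\dz})$ (the residual $g_j(z\ii)+s_j\ii$ is $-\lambda_j\ii/\rho\ii$ or $0$ depending on whether $\rho\ii>0$, hence $\mc{O}(\norm{\dz})$ using \Cref{lemma:dual_bounded}) and $\rho(g_j(z+\dz)+s_j+\ds_j)=\mc{O}(\norm{\dz}^2)$; so each such term is $o(\norm{\dz}^2)$ and so is the finite sum. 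Collecting all blocks gives $\phi'(1)=o(\norm{\dz\ii}^2)$. Since \eqref{eq:condition_phi_PM} gives $|\phi'(0)|\ge\tfrac{1}{2\bar\alpha}\norm{\dz\ii}^2>0$, we get $|\phi'(1)|/|\phi'(0)|=o(1)\to0$, so $|\phi'(1)|\le\sigma_2|\phi'(0)|$ for all sufficiently large $i$.

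The main obstacle is exactly the slack block: the piece of the $s$‑gradient of $\mathcal{L}_\textup{aug}$ of size $\norm{\lambda^\star}$ is not small, so naively it would destroy the $o(\norm{\dz}^2)$ estimate; taming it requires both the eventual identification of the active set (\Cref{lemma:active_set}(ii) via \Cref{th:gen_rec_alg}) and strict complementarity in order to force $\ds_j\ii=0$ on the active indices, leaving only inactive indices where the multiplier itself is $o(\norm{\dz})$. A secondary technical point is to verify carefully that every $\mc{O}(\norm{\dz}^k)$ constant is iteration‑independent, which follows from $\rho\ii\le\bar\rho$, $\alpha\ii\le\bar\alpha$, the Standing Assumptions \ref{ass:4} and \ref{ass:5}, and \Cref{lemma:dual_bounded}.
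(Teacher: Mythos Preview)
Your proposal is correct and follows essentially the same route as the paper: expand $\phi'(1)$ via the gradient of $\mathcal{L}_\textup{aug}$ at the updated point, use Taylor remainders to make the inequality and equality residuals $\mc{O}(\norm{\dz}^2)$, use \eqref{eq:FOC} with \Cref{ass:7} to make the Lagrangian-gradient block $o(\norm{\dz})$, and tame the slack block by the active/inactive split (with $\ds_j=0$ on the active set once it is identified), concluding $\phi'(1)=o(\norm{\dz}^2)$ against $|\phi'(0)|\ge\tfrac{1}{2\alpha}\norm{\dz}^2$. One small fix: for an inactive index $j$ the bound $g_j+s_j=-\lambda_j/\rho=\mc{O}(\norm{\dz})$ comes from $\lambda_j\to\lambda_j^\star=0$ together with \Cref{ass:7} (so $|\lambda_j|=|\lambda_j-\lambda_j^\star|=\mc{O}(\norm{\dz})$), not from \Cref{lemma:dual_bounded}, which only gives boundedness.
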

\smallskip

\begin{proof}
The proof is similar to \cite[Lemma 5.3]{gill_SQP_theory}. See \Cref{app:proofs} for the proof details.
\end{proof}
\smallskip


\section{Practical implementation}

In \Cref{sec:Proof} we have proven the convergence of the proposed method for general inequality and equality constrained, smooth, optimization problems. On the other hand, it is known that the projected gradient method is inefficient if the feasible set is a general polytope \cite{bertsekas1999nonlinear}.

In the following, we outline two methods for simplifying the computation of the projection. In \Cref{sec:squared_slacks} we transform the general nonlinear problem into an equality constrained problem via squared-slack variables, and compute the projection onto the resulting affine subspace in closed form.

In \Cref{sec:MPC_gradient}, we apply the method to nonlinear model predictive control problems with box constraints on the input variables and terminal quadratic constraints on the state variables. As in standard gradient method for linear MPC, by writing (condensing) the state variables as an explicit function of the input, the equality constraints are directly embedded into the objective function. Then, the considered constraints are shown to be easy-to-project for the proposed algorithm by the introduction of the squared-slack variables.

\subsection{General optimization problems}
\label{sec:squared_slacks}

The problem in \eqref{eq:original_problem} can be reformulated as the equality constrained problem
\begin{equation*} 
\begin{split}
\min_{ (z,y) \in \Real^n \times \Real^m} \hspace{0.3cm} &  J(z)  \\
\text{s.t.} \hspace{0.35cm} 
&g(z) + \frac12 \diag(y) \, y = 0 \\
& h(z) = 0, \\
\end{split}
\end{equation*}
%
%
hence more generally as
\begin{equation} 
\label{eq:equality_problem}
\begin{split}
\min_{v \in \Real^{n+m}} \hspace{0.3cm} &  J(v)  \\
\text{s.t.} \hspace{0.35cm} & p(v) = 0,
\end{split}
\end{equation}
with primal variable $v := [z;y]$ and $p:\Real^{n+m} \rightarrow \Real^{m+p}$ defined as $p([z;y]) := [ g(z) +\frac12 \textup{diag}(y)\, y; h(z)]$.
Let us define $\mu := [\lambda; \nu]$ as the dual variables associated to the constraints $p(v)=0$ ($\mu$), $g(z)+ \frac12 \diag(y)y = 0$ ($\lambda$), and $h(z) = 0$ ($\nu$), respectively. 
The equivalence between \eqref{eq:original_problem} and \eqref{eq:equality_problem} is shown in technical details in \Cref{app:equivalence_slacks}.

In \Cref{alg:nonlinear_algorithm}, the primal update $\dv$ is computed in \eqref{eq:algorithm_dz} as a function of the current $v$, i.e.,
\begin{equation*}
\begin{split}
\dv := \Proja{\left\{\dv \in \Real^{n+m} \ |  \ \nabla p(v)^\top \dv = - p(v)\right\} }{ -\alpha \nabla J(v)},
\end{split}
\end{equation*}
where 
$$
p(v) = \left[ \begin{matrix}
g(z) + \frac12 \diag(y) \, y \\ h(z)
\end{matrix} \right], 
\qquad
\nabla p (v) = \left[ 
\begin{matrix}
\nabla g(z) & \nabla h(z) \\ \diag(y) & 0
\end{matrix}   \right] . 
$$
The projection admits a closed form solution. In fact, we can determine the dual variable $\mu\G := \left[ 
 \lambda\G ;  \nu\G
\right] $ as the solution of the dual problem \cite{Boyd2004_ConvexOpt}:
\begin{multline}
\label{eq:eq_dual}
\mu\G := \left[ 
\begin{matrix}
\lambda\G \\ \nu\G
\end{matrix} 
\right]
=
 \left(\alpha \nabla p (v) ^\top \nabla p (v) \right) ^{-1} \left(p(v) -\nabla p(v)^\top \alpha \nabla J(v)  \right)  \\
 = \! \! \left[ \begin{matrix}
\nabla g(z)^\top \nabla g(z) \! + \! \diag (y)^2 &  \!\!\! \nabla g(z)^\top \nabla h(z) \\ \nabla h(z)^\top \nabla g (z) & \!\!\! \nabla h(z)^\top \nabla h(z) 
\end{matrix} \right]^{-1} \cdot \\
\cdot \left[
\begin{matrix}
\frac{1}{\alpha} g(z) \! + \! \frac{1}{2 \alpha} \textup{diag} (y) y \! - \! \nabla g(z)^\top \nabla J (z) \\
\frac{1}{\alpha} h(z) - \nabla h(z)^\top \nabla J(z)
\end{matrix}
 \right].
\end{multline}
Then, the primal solution is given by
\begin{equation}
\begin{split}
\label{eq:eq_primal}
\dv &= \left[ \begin{matrix}
\dz \\ \dy 
\end{matrix} \right] = -\alpha \nabla J(v) - \alpha \nabla p (v)  \mu\G = \\ &= \left[ \begin{matrix} -\alpha \nabla J (z) \\ 0 \end{matrix} \right] - \alpha \left[
\begin{matrix}
\nabla g(z) & \nabla h(z) \\ \diag(y) & 0 
\end{matrix} \right] \left[ \begin{matrix}
\lambda\G \\ \nu\G 
\end{matrix} \right] \\ 
&= \left[ \begin{matrix}
-\alpha \nabla J (z) - \alpha \nabla g(z) \lambda\G - \alpha \nabla h(z)  \nu\G \\ - \alpha \diag(y) \lambda\G
\end{matrix} \right],
\end{split}
\end{equation}
and dual increments $\dm$ from \eqref{eq:dual_alg} are
$
\dm := \left[ \begin{matrix}
\dl \\ \dn 
\end{matrix} \right] = \mu\G - \mu
$.

By \Cref{ass:6}, the matrix  $\nabla p (v)$ can be proven to be full rank, therefore the matrix 
$\left(\alpha \nabla p (v) ^\top \nabla p (v) \right)$  is invertible.
Also note that by \eqref{eq:eq_dual}, the squared-slack variables do not increase the dimension of the matrix, thus the complexity of the matrix inversion does not increase.


The matrix inversion in \eqref{eq:eq_dual}  is commonly obtained via Cholesky factorization, which for a general dense matrix has a worst case computational complexity of \ \ \ \ \ \ $\mc{O}\left((m+p)^3\right)$. Therefore, for the factorization to be efficient, the sparsity of the gradients $\nabla g$ and $\nabla h$ should be exploited. In the following section, we consider a general nonlinear MPC problem for which the matrix inversion can be computed symbolically, thus avoiding the Cholesky factorization.

\subsection{Model predictive control problems}
\label{sec:MPC_gradient}

Sparsity patterns naturally arise in model predictive control problems, due to the causality of the dynamics and the structure of the constraints.

Let us consider a typical nonlinear MPC problem with box input constraints and a quadratic terminal state constraint,
\begin{equation}
\label{eq:original_MPC}
 \begin{split}
\min_{ ( x_{k+1}, u_{k} )_{k=0}^{N-1}} \hspace{0.0cm} &\sum\limits_{k=0}^{N-1}  \left\{  \frac12 x_k^\top Q  x_k + \frac12 u_k^\top R u_k \right\} + \frac12 x_N^\top P x_N \\
\text{s.t.} \hspace{0.6cm} &x_{k+1} = f \left(x_k,u_k\right) \ \forall k \in \mathbb{Z}[0,N-1] \\
& u_k \in \left[a_k, b_k\right] \hspace{0.941cm} 
\forall k \in \mathbb{Z}[0,N-1] \\
&\textstyle \frac12 x_N^\top P x_N \leq c, 
 \end{split}
\end{equation}
where the index $k$ spans the predicted state $x_{k+1}$ and input $u_k$ in the horizon $N$. The bounds satisfy $a_k < b_k \in \mathbb{R}^{n_\text{u}}$ componentwise, $c>0$ and
$Q, P, R \succcurlyeq 0$. The discrete-time dynamics, $f$, are nonlinear, hence the program in \eqref{eq:original_MPC} is in general nonconvex.



 \label{sec:mod_alg}
For ease of notation, let us first define the vectors $\bu := [ u_{0}; \ldots; u_{N-1}]$ for the control input sequence, with bounds $\ba~:=~[ a_{0}; \ldots; a_{N-1} ]$ and $\bb~:=~[ b_{0}; \ldots; b_{N-1} ]$, and the corresponding state evolution 
$\bx~:=~[ x_{1}; \ldots; x_{N} ]$, 
and stack the state and input cost matrices
$\Q = \text{blockdiag} \left(Q,  \ldots, Q, P \right)$ and $\mc{R}~=~\text{blockdiag} \left(R,  \ldots, R \right)$.
We recast the dynamics in a compact form as $\bx = \psi(\bu)$, where for a fixed initial state $x_0$, the function $\psi: \R^{N n_\txu} \rightarrow \R^{N n_\txx}$ maps the sequence of inputs $\bu$ to the predicted  sequence of states $\bx$ according to the nonlinear dynamics $x_{k+1} = f(x_k,u_k)$. Thus, by including the nonlinear dynamics within the objective and by adding the nonlinear slacks $\by\ta, \by\tb \in \Real^{N n_\text{u}}$ and $y\tc \in \Real$ as in \eqref{eq:equality_problem}, the
 MPC problem in \eqref{eq:original_MPC} reads as  
\begin{equation}
\label{eq:nonlinear_MPC}
\begin{split}
	\min_{\bu, \by\ta, \by\tb, y\tc} \hspace{0.4cm} & \textstyle \frac12 \psi \left( \bu \right)^\top \Q  \psi \left( \bu \right) + \frac12 \bu^\top \mc{R} \bu =: J(\bu) \\
	\text{s.t.} \hspace{0.4cm} & -\bu + \bs a + \frac12 \diag (\by\ta) \by\ta = 0 \\
	 & \bu - \bs b + \frac12 \diag (\by\tb) \by\tb = 0  \\
	& \textstyle \frac12 \psi_N (\bu)^\top P \psi_N (\bu) - c + \frac12 y\tc^2  = 0.  \\
\end{split}
\end{equation}
This formulation, albeit being unusual compared to other approaches when solving nonlinear MPC problems \cite{Diehl2002577, Tenny2004}, leads to computational advantages for the proposed \Cref{alg:nonlinear_algorithm}. 

The primal and dual variable updates of \Cref{alg:nonlinear_algorithm} are determined as explained in \Cref{sec:squared_slacks}. 
Note that the matrix inversion in \eqref{eq:eq_dual} can be computed analytically offline. In fact, since the gradient of the constraint is $\nabla g(\bu) = \left[ \ -I \ | \ I  \ | \ q \  \right]$, where $q \in \Real^{N n_\text{u}}$ is the gradient of the terminal constraints with respect to $\bu$, the matrix is inverted as follows:
\begin{equation*}
\begin{split}
&\left(\nabla g (\bu) ^\top \nabla g (\bu) + \diag([\by\ta;\by\tb;y\tc])^2\right)^{-1}   \\
&= \left[ 
\begin{array}{ccc}
I + \diag(\by\ta)^2 & -I & -q \\
-I & I + \diag(\by\tb)^2  & q \\
-q^\top & q^\top & q^\top q + y\tc^2
\end{array}
\right]^{-1} \\
&=
\left[ \begin{array}{ccc} D + B + r (B q) (B q)^\top & D  - r (B q)(A q)^\top & r B q \\ D - r (A q)(Bq)^\top & D + A + r (A q) (A q)^\top & -r A q \\ r (Bq)^\top & -r (A q)^\top & r
\end{array} \right],
\end{split}
\end{equation*}
with
\begin{equation*}
\begin{split}
D &= \diag \left(\frac{1}{y_{\text{a},j}^2 + y_{\text{b},j}^2 + y_{\text{a},j}^2 y_{\text{b},j}^2}\right), \ A = \diag \left(\frac{y_{\text{a},j}^2}{y_{\text{a},j}^2 + y_{\text{b},j}^2 + y_{\text{a},j}^2 y_{\text{b},j}^2}\right) \\
B &= \diag \left(\frac{y_{\text{b},j}^2}{y_{\text{a},j}^2 + y_{\text{b},j}^2 + y_{\text{a},j}^2 y_{\text{b},j}^2}\right) , \ r = \left( \sum_{i=1}^{N n_\text{u}}  \frac{y_{\text{a},j}^2 y_{\text{b},j}^2}{y_{\text{a},j}^2 + y_{\text{b},j}^2 + y_{\text{a},j}^2 y_{\text{b},j}^2} q_j^2 + y\tc^2 \right)^{-1}.
\end{split}
\end{equation*}
Because of the diagonal structure of $A$ and $B$, the vectors $Aq$ and $Bq$ are cheap to compute, and this allows one to compute the matrix multiplication in \eqref{eq:eq_dual} in only $\mc{O}\left(N n_\txu\right)$ floating point operations (FLOPS).
Moreover, since the terminal constraint in \eqref{eq:original_MPC} has the same structure of the terminal cost in the objective function, the computation of $q$ is inexpensive when performed together with the computation of $\nabla J(\bu)$.


The primal variable updates $\Du$ and the slack updates $\Dya$, $\Dyb$ and $\Dyc$ 
follow from  \eqref{eq:eq_primal}. The computation of the gradient of the objective function can also be done efficiently by exploiting the causality of the nonlinear dynamics, $f$.  From  the definition of $J(\bu)$ in \eqref{eq:nonlinear_MPC}, we have
\begin{subequations}
\label{eq:du_algorithm_MPC}
\begin{align}
\nabla J(\bu) &= \nabla \psi(\bu) \mc{Q} \bx + \mc{R} \bu \label{eq:du_algorithm_MPC_2} \ \text{ with }  \ \bx = \psi(\bu), 
\end{align}
\end{subequations}
where the matrix $\nabla \psi (\bu)$ contains the standard linearization matrices of the nonlinear dynamics 
%
$$
\textstyle F_{k} := \frac{\partial f}{\partial x}( x_k, u_k), \qquad 
G_{k} := \frac{\partial f}{\partial u}( x_k, u_k),
$$
and it is block upper triangular. That is, $\nabla \psi = [\nabla \psi_0, \ldots,\nabla \psi_{N-1} ]$, where the column blocks $\nabla \psi_j : \R^{N n_\txu} \rightarrow \R^{N n_\txu \times n_\txx}$ for all $j \in \{0,\ldots,N-1 \}$ are as follows,
\begin{equation*}
\begin{split}
\nabla \psi_0 &= \left[ 
\begin{array}{c|c}
 G_0 & 0 
\end{array}
\right]^\top \\
\nabla \psi_1 &= \left[ 
\begin{array}{cc|c}
 F_1 \nabla x_1 (u_0) & G_1 & 0
\end{array} \right]^\top = \left[ 
\begin{array}{cc|c}
 F_1 G_0 & G_1 & 0
\end{array}
\right]^\top \\
 & \ \ \vdots \\
\nabla \psi_{N-1} &= \left[ 
\begin{array}{ccc}
 F_{N-1} \nabla x_{N-1} (u_0, \ldots, u_{N-2})  &  F_{N-1} F_{N-2} \nabla x_{N-2} (u_0, \ldots, u_{N-3})  &  \ldots  \end{array} \right. \\
 & \left. 
\begin{array}{ccc}
 &  \ldots &  G_{N-1} 
\end{array} \right]^\top \\
&= \left[ 
\begin{array}{ccccc}
 F_{N-1} \cdot \ldots \cdot F_1 G_0 & F_{N-1} \cdot \ldots \cdot F_2 G_1 & \ldots & F_{N-1} G_{N-2} & G_{N-1} 
\end{array} \right]^\top.
\end{split}
\end{equation*}
Note that the matrix $\nabla \psi$ is not required explicitly in \eqref{eq:du_algorithm_MPC}, but only the   product  $ \nabla \psi \mc{Q} \bx$ is. Therefore, the block upper triangular structure of $\nabla \psi$ and the block diagonal structure of $\mc{Q}$ yields,
\begin{equation*}
\begin{split}
\nabla \psi \, \mc{Q} \bx &= \left[ 
\begin{matrix}
G_0^\top  & \ldots & & & \\
&  \ddots & & & \\
&  & G_{N-3}^\top &  G_{N-3}^\top F_{N-2}^\top & G_{N-3}^\top F_{N-2}^\top F_{N-1}^\top \\
&  & & G_{N-2}^\top & G_{N-2}^\top F_{N-1}^\top \\
& & & & G_{N-1}^\top
\end{matrix}
\right] 
\left[
\begin{matrix}
Q x_1 \\
\vdots \\
Q x_{N-2} \\
Q x_{N-1} \\
P x_{N} 
\end{matrix}
\right] \\
&= \left[ 
\begin{matrix}
\vdots \\ G_{N-3}^\top \left( Q x_{N-2} + F_{N-2}^\top \left( Q x_{N-1} + F_{N-1}^\top P x_N \right) \right)  \\
 G_{N-2}^\top \left( Q x_{N-1} + F_{N-1}^\top \left(P x_N\right) \right)  \\ 
 G_{N-1}^\top P x_N
\end{matrix}
\right],
\end{split}
\end{equation*}
which can be efficiently determined by backward substitution. For diagonal cost matrices $Q$, $R$ and full $P$, the number of Floating Point Operations (FLOPS) required to compute the last vector $G_{N-1}^\top P x_N$ is upper bounded by $2 (n_\txx^2 +  n_\txx n_\txu)$ FLOPS. By not recomputing the term $P x_N$, the second last subvector requires a number of FLOPS upper bounded by $2 (n_\txx^2  + n_\txx n_\txu+ n_\txx)$ FLOPS. Since every subvector can be computed from the successive one and by also considering the term $\mc{R} \bu$, the computational complexity of computing the gradient step is $\mc{O} \left( N(n_\txx^2  + n_\txx n_\txu) \right)$.
Since the subsequent steps of \Cref{alg:nonlinear_algorithm} to determine   $\rho\ii$ from \eqref{eq:def_phi} and $t\ii$ from \eqref{eq:Wolfe_condition}  have lower complexity, including the computation of the merit function $\phi(t)$ and its derivative $\phi'(t)$, this is the resulting complexity of the algorithm.  
Note that this complexity is comparable to standard gradient method for linear MPC \cite{richterFastMPC}, $\mc{O}\left((N n_\txu)^2\right)$,  and since the Hessian $H$ never needs to be computed, the complexity depends linearly (instead of quadratically) on the prediction horizon, $N$. Further, the complexity of the SQP method depends on the complexity of the QP solver. The  Active Set Method for the presented MPC problem requires $\mc{O}\left((N n_\txu)^2\right)$ FLOPS \cite{ferreau2008activeSet}, while an Interior Point Method exploiting sparsity of the MPC requires  $\mc{O}\left(N(n_\txx^3 + n_\txx^2 n_\txu)\right)$ FLOPS \cite{domahidi2012FORCES}.
 
Note that the Lyapunov constraint $\frac{1}{2} x_N^\top P x_N$ need not  necessarily act on the terminal state: an alternative formulation such as contractive MPC, employing quadratic Lyapunov conditions in time steps other than the last one \cite{Kothare2000}, can be analogously considered.


\section{Numerical example: nonlinear Model Predictive  Control of an inverted pendulum}
\label{sec:simulations}
We consider an inverted pendulum 
as a rod of length $l = 0.3$ m  with mass $m = 0.2$ kg concentrated at the tip and no friction acting on the cart and swing. The mass of the cart is $M = 0.5$ kg and the gravitational acceleration $g = 10$~m/s$^2$. The states $x_1$ and $x_2$ are respectively the cart position and velocity and $x_3$ and $x_4$, the pendulum angle and angular velocity. The input $u$ is the applied force on the cart  and it is subject to box constraints. We discretize the continuous-time dynamics
%
%
\begin{equation*}
\begin{split}
&\dot{x}_1 = x_2 \\
&\dot{x}_2 =  \frac{m g \sin x_3 \cos x_3-m l x_4^2 \sin x_3 +u}{M+m \sin^2(x_3)}\\
&\dot{x}_3 = x_4 \\
&\dot{x}_4 =  \frac{g}{l}\sin  x_3  \, +  \frac{ m g \sin  x_3   \cos^2  x_3  + u \cos  x_3  - m l x_4^2 \sin x_3  \cos x_3 }{l \left(M + m\sin^2  x_3 \right)},
\end{split}
\end{equation*}
%
with the explicit Euler method with sampling time $T_\text{s} = 0.1$~s, and hence obtain an MPC problem of the form \eqref{eq:original_MPC}, with model Jacobians computed symbolically. The desired closed-loop performance is achieved with a prediction horizon of $N=8$ and the cost matrices are $Q = \diag([10; \, 0.1; \, 100; \, 0.1])$ and $R = 1$. The terminal cost matrix, $P$, is determined by the Algebraic Riccati Equation using  linearized dynamics around the desired equilibrium. The constant $c$ in the terminal constraint is set to $c=1.5$, so that such constraint is active at the first iteration.

The system has two sets of unforced equilibria, the unstable ones $[p; \, 0; \, 2 k \pi; \, 0]$, and the stable ones $x = [  p; \, 0; \, \pi + 2 k \pi; \, 0 ]$, with $p \in \Real$ and $k \in \mathbb{Z}$. Physically, the former correspond to the pendulum in the upright position, while the latter in the natural upside-down configuration. The goal of the controller is to stabilize the system around the origin, that is, to the unstable equilibrium, starting from the stable one at	 $x_0 := \left[ 0; \, 0; \, \pi; \, 0 \right]$. 



For many nonlinear problems, the Real-Time Iteration yields a sufficiently good approximation of the nonlinear solution \cite{Diehl2002577}. This consists of solving only one QP in \eqref{eq:QP_z} at every time step. For the specific problem considered here, the RTI effectively stabilizes the pendulum, but its closed-loop cost is much larger than that obtained by a full nonlinear solution, as shown in \Cref{fig:statesAll} and \Cref{tab:Computations}, and it has the advantage of requiring low computational times \cite{FORCESPro}. Further, the linearization of the terminal constraint may result to be unfeasible, even though the terminal constraint is feasible for the original nonlinear problem. Thus, a reformulation with soft constraints is required.

The nonlinear solution is determined via the proposed method and with the Sequential Quadratic Programming approach in \Cref{alg:SQP}, and the computational times obtained are in \Cref{tab:Computations}. Specifically, the solver SNOPT is used, running in Fortran and interfaced via TOMLAB to {\sc  Matlab}, and the computational time is measured internally by the solver \cite{gill2005snopt, holmstrom1999tomlab}. We do not observe a significant difference in the computational time with direct calls to SNOPT.  The proposed algorithm has been coded in C and the time indicated comprises both the preparation of the problem, i.e. building the Jacobian matrices, and solving the optimization problem. The calculations have been performed on a commercial off-the-shelf Windows PC with processor Intel Core i7-3740QM $2.70$Ghz. A pure sequential C code was used, compiled by Intel Composer 2016 with the optimization flag \texttt{/Ox} enabled.  

In the implementation we have noticed that the solver SNOPT requires an overly long time to solve the first optimization problem. As the level of optimality does not reach the desired tolerance, the solver exceeded the maximum number of major iterations (150), thus requiring more than $1$ second for the solution of the problem. The average and worst case times reported in \Cref{tab:Computations} for the SQP solver do not account for this first iteration. The problem does not occur in the  proposed algorithm with the same initialization. 

The computational times obtained make the proposed algorithm competitive with the solver SNOPT. The average time is $80 \%$ faster than the SQP, while the best and worst cases are significantly better. Warm starting makes the algorithm particularly effective when it is initialized close to the optimal solution, as only few gradient steps are required for convergence. The proposed algorithm is slower than the RTI by around one order of magnitude, as the RTI requires only the solution to one QP at each time step. 


Additional benefits in terms of computational speed can be obtained by accelerating the proposed algorithm via a specific heuristic that modifies \eqref{eq:algorithm_dz}. More details and computational times for a similar example are given in \cite{Alg_preliminary_CDC2016}.

\begin{figure}[t]
	\centering
	\begin{subfigure}[c]{0.45\columnwidth}
	\begin{tikzpicture}[scale=1]
		\node[anchor=south west,inner sep=0] at (0,0) {\includegraphics[width=.9\columnwidth]{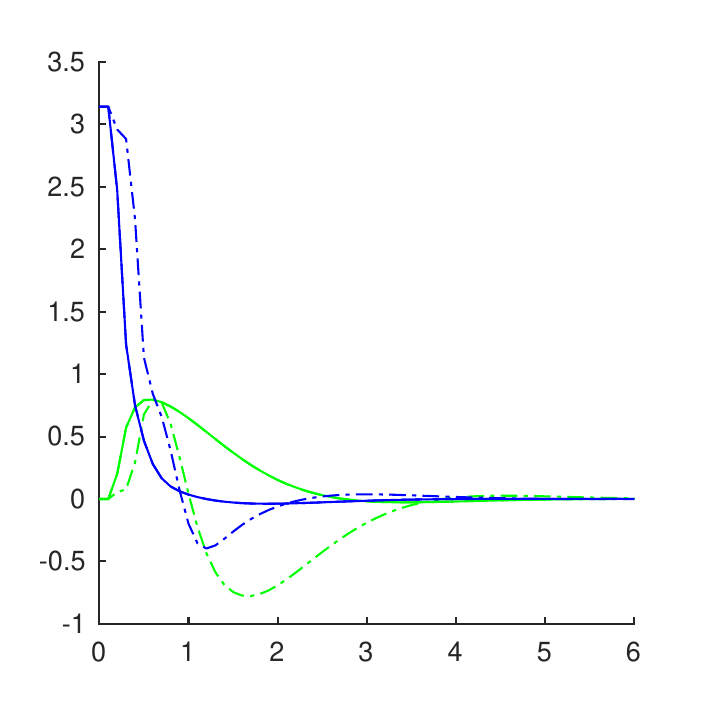}};
		\node at (-.25,2.8) {\rotatebox{90}{  {\footnotesize {\color{green} Cart position [m]}, {\color{blue} Pendulum angle [rad]}}}};
		\node at (2.8,-0.25) {\footnotesize time};
    \end{tikzpicture}	
    \caption{ }
    \end{subfigure}
    \hspace{0.05\columnwidth}
    \begin{subfigure}[c]{0.45\columnwidth}
	\begin{tikzpicture}[scale=1]
		\node[anchor=south west,inner sep=0] at (0,0) {\includegraphics[width=.9\columnwidth]{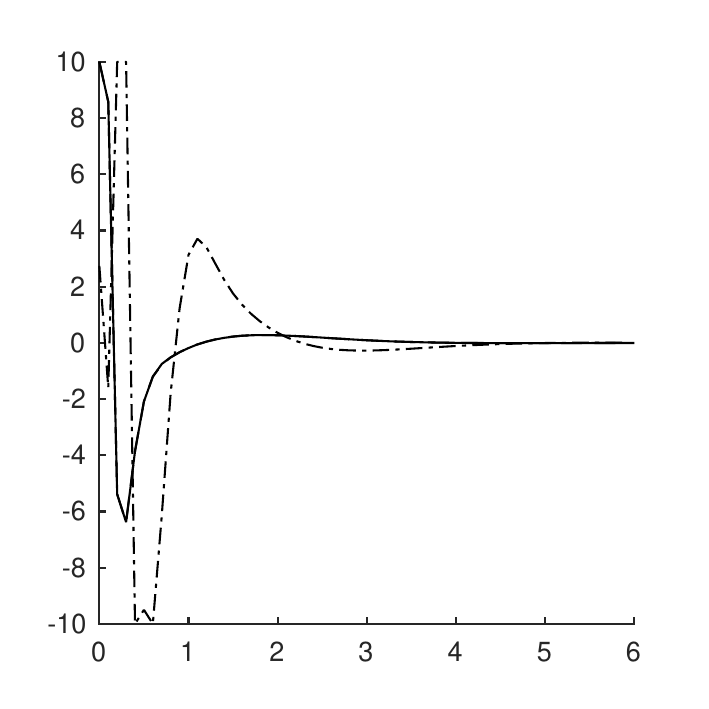}};
		\node at (-.25,2.8) {\rotatebox{90}{  {\footnotesize Force on the cart [N] }}};
		\node at (2.8,-0.25) {\footnotesize time};
    \end{tikzpicture}	
        \caption{ }
    \end{subfigure}
    \caption{Swing-up simulation of an inverted pendulum: (a) state dynamics, where the angle is in blue and the cart position in light green; (b) input: force applied on the cart. The solid lines show the proposed gradient algorithm solution and they overlap with those of the Sequential Quadratic Programming solution. The dash-dot lines show the Real-Time Iteration solution. }
    \label{fig:statesAll}    
\end{figure}

\begin{table}
\caption{Computational times}
\label{tab:Computations}
\begin{center}
\begin{tabular}{@{}lllll@{}}\toprule
 Method  & Avg. time (ms) & Best (ms) & Worst (ms) &  Cost   \\ 
\midrule
Proposed Algorithm & $2.39$ &  $0.06$ & $4.15$ & $318$ \\
 SQP (SNOPT) & $9.66$* & $4.00$ &  
 $370$* &  $318$ \\
 RTI (FORCES PRO) & $0.13$ & $0.10$ & $0.17$ & $527$ \\
\bottomrule
\multicolumn{5}{p{12cm}}{* The first MPC optimization in SNOPT exceeds the maximum number of major iterations and is not considered in the average and worst case times.}
\end{tabular}
\end{center}
\end{table} 

\begin{figure}[t]
	\centering
	\begin{tikzpicture}[scale=1]
		\node[anchor=south west,inner sep=0] at (0,0) {\includegraphics[width=0.45\columnwidth]{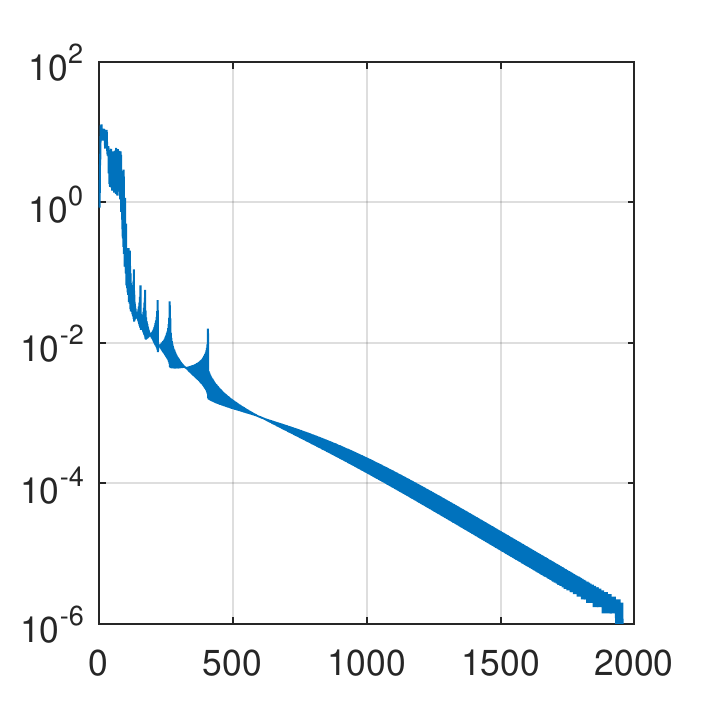}};
		\node at (-0.3,3) {\rotatebox{90}{  {\footnotesize KKT optimality}}};
		\node at (3.2,-0.2) {\footnotesize \# iterations};
    \end{tikzpicture}	
    \caption{Level of optimality (derivative of the Lagrangian) as a function of the iteration number for a particular MPC instance. The desired tolerance is set to $1$e$-6$.}
    \label{fig:converg}    
\end{figure}

\subsection{Computational considerations}

We consider standard convergence conditions to terminate the algorithm. Instead of considering an exact null $\dz\ii$ as in \Cref{alg:nonlinear_algorithm} for terminating the optimization routine, we relax the condition to a small enough value. Leveraging \eqref{eq:dz_KKT}, if feasibility of the constraint is achieved, by bounding appropriately $\norm{\dz}$ we can equivalently impose that the iterate $z\ii$ is a KKT point for the nonlinear problem with a desired tolerance. Another condition involves the derivative of the augmented Lagrangian function, $ \phi'(0) $. 
By  \eqref{eq:condition_phi_PM}, setting a negative lower bound to the derivative $\phi'(0)$ corresponds to checking that the iterate $\norm{\dz}$ is small. As usual in gradient methods, a limit on the maximum number of iterations is also set. We discuss this last criterion in the considerations listed next.

The theoretical results presented in \Cref{sec:Proof} show that  guarantees on the convergence speed can be derived only when the iterate is close to the solution and the correct active set is determined, achieving linear rate. In fact, by \Cref{th:local_conv}, linear convergence is achieved via a specific tuning of 
$\alpha$, based the maximum eigenvalue of Hessian of the Lagrangian at the optimum. This would allow for $t=1$ as shown in \Cref{sec:t_1}. On the other hand, such a value is unknown a priori, and an approximation based on the Hessian of the current iterate can be overly expensive to compute. From our numerical experience, we  recommend instead to set an $\alpha$ (not necessarily linked to the problem Lagrangian) and let the line search variable $t$ become smaller than $1$ in the line search. In fact, this approach achieves a comparable convergence speed to the linear rate obtained with the specific $\alpha$ of \Cref{sec:t_1} and $t=1$. 


From our numerical experience we also observe that the convergence rate might be faster than linear when the solution is far from optimality, albeit theoretically the convergence rate is unknown, and then slow down to linear once it is close to the optimal point. \Cref{fig:converg} shows, as an example, the KKT optimality, computed via the iterate norm $\dz$, as a function of the iteration number for a particular MPC instance. Analogously to standard convex optimization theory, the linear rate factor depends on second-order information and, in our case, a large condition number of the  Hessian of the Lagrangian at the solution can yield a  slow linear convergence in practice \cite{bertsekas1999nonlinear}.
%
%
This means that most of the iterates might be employed to reach the desired tolerance.  In the particular example considered here we set as a limit $3000$ iterations.

Combining the method with standard SQP would be a convenient solution for a general purpose solver, since using second-order information as the correct active set is determined speeds up convergence to superlinear.

\appendix

\section{Proofs of \Cref{sec:Proof}}
\label{app:proofs}

\begin{proof}[Proof of \Cref{th:primal_dual}]
It follows from \Cref{th:convergence} and \Cref{lemma:active_set} that if \\ 
$\norm{\dz\ii} = 0$, then, by \eqref{eq:FOC}, \eqref{eq:update_dual} and \eqref{eq:dz_KKT},  $\lambda\G\ii= \lambda^\star$   and $\nu\G\ii = \nu^\star$, and the algorithm terminates. Thus, we will assume henceforth that $\norm{\dz\ii} \neq 0$ for all $i \in \N$.
By defining, for all $k \leq i \in \N$,
\begin{equation*}
\gamma_{k,i} := 
\begin{cases}
\tilde t\ii & \text{ if } k=i \\
\tilde t\kk \prod_{m = k+1}^{i} \left( 1 - \tilde t\mm \right) & \text{ otherwise, }
\end{cases}
\end{equation*}
with $\tilde t^{(0)} := 1$ and $\tilde t^{(k)} = t^{(k)}$,
the definition in \eqref{eq:dual_def_2} implies that 
\begin{equation}
\label{eq:th2_be0}
\lambda\ip = \sum_{k=0}^i \gamma_{ki} \lambda\G\kk,
\end{equation}
 for all $i\geq 0$, because of the initial condition $\lambda^{(0)} = \lambda\G^{(0)}$. Then, by \Cref{lemma:phi_property} we have
\begin{subequations}
\begin{align}
0 < \bar t \leq \tilde{t}\ii &\leq 1 \ \ \forall i \in \N \label{eq:th2_be1}\\
\sum_{k=0}^{i} \gamma_{k,i} &= 1 \label{eq:th2_be2}\\
\gamma_{k,i} & \leq (1-\bar t)^{i-k} \ \ \forall k < i. \label{eq:th2_be3}
\end{align}
\end{subequations}

Since $z\ii \rightarrow z^\star$, the iterates will reach a neighborhood of $z^\star$ where the problem in \eqref{eq:proj_opt} identifies the correct active set (\Cref{lemma:active_set}) and the active constraint will have full column rank. Assume that the property holds for $i > \tilde i$. From \eqref{eq:dz_KKT}, the definition of $\lambda\G\ii$ and \Cref{ass:4}, for $i\geq \tilde i$ there exists $M > 0$ such that:


\begin{equation}
\label{eq:th2_be4}
\lambda\G\ii = \lambda^\star + M\ii d\ii v\ii
\end{equation}
with $|M\ii| \leq M$, $d\ii = \max\left\{\norm{\dz\ii},\norm{z^\star - z\ii}\right\}$ and $\norm{v\ii} = 1$. 
{For any given} $\varepsilon > 0$, \Cref{th:convergence} {implies} that $i_1$ can be chosen such that for all $i \geq i_1$, {we have}
\begin{equation}
\label{eq:th2_bo}
|M\ii d\ii| \leq \varepsilon/2.
\end{equation}
Then, we define the iteration index $i_2$ such that for all $i \geq i_2$, {we have}
\begin{equation}
\label{eq:th2_be5}
(1 - \bar t)^i \leq \frac{\varepsilon}{2 (i+1) (1 + \hat \lambda\G + \norm{\lambda^\star})}
\end{equation}
with $\hat \lambda\G$ {being} an upper bound {to} $\norm{\lambda\G\ii}$ for all $i$. {Now} let $\tilde i := \max \{i_1, i_2\}$. Then, from \eqref{eq:th2_be0} and \eqref{eq:th2_be4}, for all $i \geq \tilde i$ we have 
$$
\lambda\ip = \sum_{k=0}^{\tilde i} \gamma_{k,i} \lambda\G\kk + \sum_{k=\tilde i + 1}^{i} \gamma_{k,i} 
\left(\lambda^\star + M\kk d\kk v\kk \right).
$$
Hence it follows from \eqref{eq:th2_be2} that
\begin{equation*}
\lambda\ip - \lambda^\star = \sum_{k=0}^{\tilde i} \gamma_{ki}( \lambda\G\kk - \lambda^\star) + \sum_{k=\tilde i + 1}^{i} \gamma_{ki} M\kk d\kk v\kk.
\end{equation*}
Since the dual variable $\lambda\G$ and $v\kk$ are bounded in norm, it follows that
\begin{equation}
\label{eq:th2_int1}
\norm{\lambda\ip - \lambda^\star} \leq ( \hat \lambda\G + \norm{\lambda^\star}) \sum_{k=0}^{\tilde i} \gamma_{k,i} + \sum_{k=\tilde i + 1}^{i} \gamma_{k,i} | M\kk d\kk |.
\end{equation}
For all iterations $i \geq 2 \tilde i$, it follows from \eqref{eq:th2_be1} and \eqref{eq:th2_be3} that
\begin{equation*}
\sum_{k=0}^{\tilde i} \gamma_{k,i} \leq \sum_{k=0}^{\tilde i} (1 - \bar t)^{i-k} \leq \sum_{k=0}^{\tilde i} (1 - \bar t)^{2 \tilde i - k} \leq (\tilde i + 1) (1 - \bar t)^{\tilde i}.
\end{equation*}
{By} using \eqref{eq:th2_be5}, we derive the bound 
{
$
\left( \hat \lambda\G + \norm{\lambda^\star} \right) \sum_{k=0}^{\tilde i} \gamma_{k,i} \leq \frac12 \varepsilon.
$
on the first term on the right-hand side of \eqref{eq:th2_int1},
and to bound the second term in \eqref{eq:th2_int1}, we use \eqref{eq:th2_be2} and \eqref{eq:th2_bo}:
}
\begin{equation}
\label{eq:th2_int2}
\sum_{j=\tilde i + 1}^{i} \gamma_{k,i} |M\kk d\kk| \leq \frac12 \varepsilon \sum_{j=\tilde i + 1}^{i} \gamma_{k,i} \leq \frac12 \varepsilon.
\end{equation}
{Finally, by} combining \eqref{eq:th2_int1} and \eqref{eq:th2_int2}, we obtain that {for all} $\varepsilon$, {there exists} $\tilde{i}$ such that
\begin{equation*}
\norm{\lambda\ii - \lambda^\star} \leq \varepsilon \text{ for all } i \geq 2 \tilde i + 1,
\end{equation*}
which implies that
$
\lim_{i \rightarrow \infty} \norm{\lambda\ii - \lambda^\star} = 0.
$
The convergence of the dual variables for the equality constraint is analogous.
\end{proof}
 
\begin{proof}[Proof of \Cref{lemma:t1_1}]
By continuity of the second derivative we can derive the following relation between the objective function evaluations:
 \begin{equation}
 \begin{split}
 J (z+\dz) &= J(z)  + \frac12 \nabla J (z)^\top \dz + \frac12 \nabla J (z)^\top \dz + \frac12 \dz^\top \nabla^2 J(z) \dz + o\left(\norm{\dz}^2\right) \\
 &= J(z)  + \frac12 \nabla J (z)^\top \dz + \frac12 \left(\nabla J(z+\dz) - \nabla^2 J(z) \dz  \right)^\top \! \dz  + \frac12 \dz^\top \nabla^2 J(z) \dz\\ &\qquad + o \left(\norm{\dz}^2\right) \\
 &= J(z) + \frac12 (\nabla J (z) + \nabla J(z + \dz ))^\top \dz + o \left(\norm{\dz}^2\right) \label{eq:4.9-J} \\ 
\end{split}
 \end{equation}
 and analogously for the constraint functions $g$ and $h$:
\begin{align}
 g(z+\dz) &= g(z) + \frac12 (\nabla g (z) + \nabla g(z + \dz ))^\top \dz + o\left(\norm{\dz}^2\right), \label{eq:4.9-g} \\
 h(z+\dz) &= h(z) + \frac12 (\nabla h (z) + \nabla h(z + \dz ))^\top \dz + o\left(\norm{\dz}^2\right). \label{eq:4.9-h}
 \end{align} 
By \Cref{ass:7} we have that
\begin{equation}
\label{eq:cont_nabla}
\begin{split}
\nabla J(z+\dz) &= \nabla J(z^\star) + \nabla^2 J(z^\star) \left(z+ \dz  - z^\star \right) + o\left(\norm{z+ \dz- z^\star}\right) \\
&= \nabla J(z^\star) +  o\left(\norm{\dz}\right)\\
\nabla g(z+\dz) &= \nabla g(z^\star) + o\left(\norm{\dz}\right) \\
\nabla h(z+\dz) &= \nabla h(z^\star) + o\left(\norm{\dz}\right),
\end{split}
\end{equation}
hence substituting into \eqref{eq:4.9-J}, \eqref{eq:4.9-g}, \eqref{eq:4.9-h} we obtain:
\begin{equation}
\label{eq:cont_fun}
 \begin{split}
 J(z+\dz) &= J(z) + \frac12 (\nabla J (z) + \nabla J(z^\star ))^\top \dz + o\left(\norm{\dz}^2\right), \\
 g(z+\dz) &= g(z) + \frac12 (\nabla g (z) + \nabla g(z^\star ))^\top \dz + o\left(\norm{\dz}^2\right), \\
 h(z+\dz) &= h(z) + \frac12 (\nabla h (z) + \nabla h(z^\star ))^\top \dz + o\left(\norm{\dz}^2\right).
 \end{split}
 \end{equation}
Let us use the simplified notation and denote $J(z)$ as $J$, $J(z+\dz)$ as $J^+$ and $J(z^\star)$ as $J^\star$ (and similarly for $g$ and $h$ and their gradients).
By \eqref{eq:update_dual}, \eqref{eq:ds_def} and \eqref{eq:dual_alg}, $\phi(0)$ and $\phi(1)$ are	
\begin{equation*}
\begin{split}
\phi(0) &= J + \lambda^\top(g+s)+ \nu^\top h +\frac12 \rho (g+s)^\top  (g+s) + \frac12 \rho h^\top  h \,, \\
\phi(1) &= J^+ + \lambda\G^\top \left(g^+-g-\nabla g ^\top \dz \right)+ \nu\G^\top h^+\\ & \qquad +\frac12 \rho \left(g^+ -g-\nabla g ^\top \dz \right)^\top \left(g^+ -g-\nabla g^\top \dz \right)+ \frac12 \rho h^{+ \top}  h^+ \\
&= J^+ + \lambda\G^\top \left(g^+-g-\nabla g ^\top \dz \right)+ \nu\G^\top h^+ + o \left( \norm{\dz}^2 \right),
\end{split}
\end{equation*}
where the last step follows by 
 \eqref{eq:algorithm_dz} and Taylor's expansions
\begin{equation*}
\begin{split}
g^{+} &- g - \nabla g^\top \dz = o\left(\norm{\dz}\right)\\
h^{+} &= \underbrace{h + \nabla h^\top \dz}_{= 0 \, \eqref{eq:algorithm_dz}} + o\left(\norm{\dz}\right) = o\left(\norm{\dz}\right).
\end{split}
\end{equation*}
By \eqref{eq:cont_fun}, $\phi(1)$ can be written as
\begin{equation*}
\begin{split}
\phi(1) &= J + \frac12 (\nabla J + \nabla J^\star)^\top \dz + \frac12 \lambda\G^\top \left( \nabla g^\star-\nabla g\right)\dz \\& \qquad +  \nu\G^\top \left( h + \frac12 (\nabla h + \nabla h^\star))^\top \dz \right)    + o\left(\norm{\dz}^2\right).
\end{split}
\end{equation*}
This implies
\begin{equation}
\label{eq:int_lem}
\begin{split}
\phi(1)-\phi(0) &=  \frac12 (\nabla J + \nabla J^\star)^\top \dz + \frac12 \lambda\G^\top \left( \nabla g^\star-\nabla g\right)^\top \dz - \lambda^\top (g+s) -\nu^\top h \\ & \ +  \nu\G^\top \left( h + \frac12 (\nabla h + \nabla h^\star)^\top \dz \right) -\frac12 \rho \left\| g+s \right\|_2^2 - \frac12 \rho \left\| h \right\|_2^2  
+ o\left(\norm{\dz}^2\right).
\end{split}
\end{equation}
By \Cref{lemma:tune_eta}, the derivative $\phi'(0)$ can be recast as
\begin{equation*}
\begin{split}
\phi'(0) &= \dz^\top \nabla J 
- 2( g +s )^\top \lambda - ( \nabla g^\top \dz + \ds )^\top \lambda\G - \rho \left\| g+s \right\|_2^2 - 2 h^\top \nu + h^\top \nu\G \\ 
&\qquad   - \rho \left\| h \right\|_2^2,
\end{split}
\end{equation*}
which when substituted in \eqref{eq:int_lem} yields
\begin{equation*}
\begin{split}
\phi(1)-\phi(0) &= \frac12 \phi'(0) + \frac12 \left( \nabla J^{\star}+ \nabla g^{\star} \lambda\G  + \nabla h^{\star} \nu\G \right)^\top \dz +\frac12 \lambda\G^\top \ds  + o\left(\norm{\dz}^2\right).
\end{split}
\end{equation*}
Next we note that \Cref{ass:7} implies specific conditions on the convergence of the variables $\lambda\G$ and $\nu\G$. That is, 
\begin{equation}
\label{eq:cond_G}
\begin{split}
\lambda\G - \lambda^\star = \dl + \lambda - \lambda^\star &= o\left(\norm{\lambda - \lambda^\star}\right) = o\left(\norm{\dl}\right) = o\left(\norm{\dz}\right)\\
\nu\G - \nu^\star &= o\left(\norm{\dz}\right).
\end{split}
\end{equation}
Therefore, by using the KKT conditions in \eqref{eq:FOC} we have
\begin{equation*}
\begin{split}
&\nabla J(z^\star)+ \nabla g(z^\star) \lambda\G + \nabla h(z^\star) \nu\G = \nabla J(z^\star)+ \nabla g(z^\star) \lambda^\star + \nabla h(z^\star) \nu^\star \\ &\qquad + \nabla g(z^\star) \left( \lambda\G - \lambda^\star\right) +\nabla h(z^\star)\left( \nu\G - \nu^\star\right) =  o\left(\norm{\dz}\right).
\end{split}
\end{equation*}
Since $\lambda\G^\top \ds \leq 0$ (where equality holds when the correct active set is identified, see the proof of \Cref{lemma:t1_2}), then
\begin{equation*}
\begin{split}
\phi(1)-\phi(0) &\leq \frac12 \phi'(0) +\frac12 \lambda\G^\top \ds + o\left(\norm{\dz}^2\right) \leq \frac12 \phi'(0)  + o\left(\norm{\dz}^2 \right). 
\end{split}
\end{equation*}
To prove that eventually $\phi(1)-\phi(0) - \sigma_1 \phi'(0) \leq 0 $ with $\sigma_1 \in (0,1/2)$, from the previous inequality we have that  $ \phi(1)-\phi(0) - \sigma_1 \phi'(0) \leq \underbrace{\left( \textstyle \frac{1}{2} - \sigma_1 \right)}_{>0} \underbrace{\phi'(0)}_{ < 0} + \, o\left(\norm{\dz}^2 \right)~\leq~0$ for all sufficiently large $i$, since by \Cref{lemma:tune_eta} the term $\phi'(0)$ is upper bounded by a negative term proportional to $\norm{\dz}^2$
%
 and 
 the term $o\left(\norm{\dz}^2 \right)$ vanishes at least quadratically fast as $i$ approaches infinity.
\end{proof}

\begin{proof}[Proof of \Cref{lemma:t1_2}]
Let us use the simplified notation and denote $J(z)$ as $J$, $J(z+\dz)$ as $J^+$ and $J(z^\star)$ as $J^\star$ (and similarly for $g$ and $h$ and their gradients).

By definition, the derivative $\phi'(1)$ is:
\begin{equation*}
\begin{split}
\phi'(1) = \ &\dz^\top \left( \nabla J^+ + \nabla g^+ \,  \lambda\G + \nabla h^+ \, \nu\G + \rho \nabla g^+ \, \left(g^+ -g -\nabla g^\top \dz \right) + \rho \nabla h^+ \, h^+ \right)  \\ 
& + \dl^\top (g^+ -g -\nabla g^\top \dz)+ \dn^\top h^+ + \ds^\top \left( \lambda\G + \rho ( g^+ -g -\nabla g^\top \dz) \right), \\ 
\end{split}
\end{equation*}
and by \eqref{eq:cont_nabla} we have
\begin{equation*}
\begin{split}
\phi'(1) = \ &\dz^\top \left( \nabla J^\star + \nabla g^\star \,  \lambda\G + \nabla h^\star \, \nu\G \right) 
+ \rho \dz^\top \nabla g^+ \, \left(g^+ -g -\nabla g^\top \dz \right)  \\ 
&   + \ds^\top \left( \lambda\G + \rho ( g^+ -g -\nabla g^\top \dz) \right) +  o\left(\norm{\dz}^2\right),
\end{split}
\end{equation*}
where we have used the fourth condition in \Cref{ass:7} and the following relations resulting from 
 \eqref{eq:algorithm_dz} and Taylor expansions: 
\begin{equation*}
g^{+} - g - \nabla g^\top \dz = o\left(\norm{\dz}\right), \ 
h^{+} = \underbrace{h + \nabla h^\top \dz}_{=0} + o\left(\norm{\dz}\right) = o\left(\norm{\dz}\right).
\end{equation*}
Moreover, by \eqref{eq:ds_def} $\ds =  - \nabla g^\top \dz- (g+s)$, hence,
\begin{equation*}
\begin{split}
\phi(1) = \ &\dz^\top \left( \nabla J^\star + \nabla g^\star \,  \lambda\G + \nabla h^\star \,  \nu\G \right) +\ds^\top  \lambda\G+ \\ & \rho (\dz^\top (\nabla g^+ - \nabla g) 
 -(g+s)^\top )  \, \left(g^+ -g -\nabla g^\top \dz \right)      + o \left( \norm{\dz}^2 \right).\\ 
\end{split}
\end{equation*}
%
%
From \eqref{eq:slack_s} and \Cref{ass:7}, $\norm{g+s} = \mc{O} \left( \norm{\dz}\right)$ and ${\ds^\top \lambda\G = 0}$ when the correct active set is determined, see  \Cref{lemma:active_set}. 
In fact, if $g_j<0$ for some $j$ at $z^\star$ then $\lambda\Gj = 0$. Then, if $\rho = 0$, $s_j = -g_j$. Otherwise, if $\rho >0$, as $\lambda_j$ converges to $\lambda^\star_j = 0$, for sufficiently large iterates $i$ we have that  $0\leq \lambda_j < - \rho g_j$. Then, by \eqref{eq:slack_s} we have
$$
s_j = -g_j - \frac{\lambda_j}{\rho} \quad \Rightarrow \quad g_j + s_j = -\frac{\lambda_j}{\rho} = - \frac{\lambda_j - \lambda_j^\star}{\rho} = \mc{O}\left(\norm{\dz}\right).
$$
If $g_j = 0$, then $s_j=0$ and $\lambda\Gj > 0$ by \Cref{ass:6}. By the complementarity conditions in \eqref{eq:dz_KKT} and \eqref{eq:slack_s} it results that $\lambda\Gj ( s_j +  \dsj) = 0$, thus $\dsj = 0$.
Therefore, in any case we conclude that
\begin{equation}
| \rho (\dz^\top (\nabla g^+ - \nabla g)- ( g+s) ) \left(g^+ -g -\nabla g^\top \dz \right)| = o\left(\norm{\dz}^2\right).
\end{equation}
Since by \Cref{ass:7} the quantity $\nabla J^\star + \nabla g^\star \lambda\G  + \nabla h^\star \nu\G$ is $o\left(\norm{\dz}\right)$, then 
$
\phi'(1) = o\left(\norm{\dz}^2\right)
$. 
By \Cref{lemma:tune_eta}, $|\phi'(0)| \geq \frac{1}{2 \alpha} \norm{\dz}^2$, thus \eqref{eq:Wolfe_condition_2} will  eventually be satisfied at every iteration.
\end{proof}

\section{Equivalence of the  squared-slack problem}
\label{app:equivalence_slacks}

The following statement shows the equivalence properties between problems \eqref{eq:original_problem} and \eqref{eq:equality_problem}.

\begin{lem}[{\cite[Proposition 1]{tapia1974stable}, \cite[Section 3.3.2]{bertsekas1999nonlinear}}]
\label{lemma:equivalence_equality}
The following hold:
\begin{enumerate}\setlength\itemsep{0.5em}
\item[(i)] $z^\star$ is a regular solution to  \eqref{eq:original_problem} if and only if $[z^\star;y^\star]$ is a regular solution to \eqref{eq:equality_problem};
\item[(ii)] if $(z^\star,\lambda^\star,\nu^\star)$ is a KKT triple for \eqref{eq:original_problem}, then $([z^\star;y^\star],[\lambda^\star;\nu^\star])$ is a KKT double for \eqref{eq:equality_problem};
\item[(iii)] if $([z^\star;y^\star],[\lambda^\star;\nu^\star])$ is a KKT double for \eqref{eq:equality_problem} and $\lambda^\star \geq 0$, then $(z^\star,\lambda^\star,\nu^\star)$ is a KKT triple for \eqref{eq:original_problem}.
\end{enumerate}
\end{lem}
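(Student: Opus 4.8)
The plan is to set up an explicit dictionary between points of \eqref{eq:original_problem} and of \eqref{eq:equality_problem}, built on the scalar identity $g_j(z)+\frac12 y_j^2 = 0 \iff y_j^2 = -2g_j(z)$. First I would record the feasibility correspondence: if $z$ is feasible for \eqref{eq:original_problem}, then choosing $y^\star$ componentwise by $y^\star_j := \sqrt{-2g_j(z)}$ (the sign is immaterial) makes $[z;y^\star]$ feasible for \eqref{eq:equality_problem}; conversely, any feasible $[z;y]$ for \eqref{eq:equality_problem} satisfies $g_j(z) = -\frac12 y_j^2 \le 0$ and $h(z)=0$, so $z$ is feasible for \eqref{eq:original_problem}. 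The one bookkeeping fact that drives everything is that $g_j$ is active at $z$, i.e.\ $g_j(z)=0$, if and only if the corresponding slack vanishes, $y_j=0$.

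For part (i) I would use the Jacobian $\nabla p(v)=\left[\begin{smallmatrix}\nabla g(z) & \nabla h(z)\\ \diag(y)& 0\end{smallmatrix}\right]$ displayed in \Cref{sec:squared_slacks}. Suppose the columns of $\nabla p(v^\star)$ satisfy a linear dependence with coefficients $a_j$ on the column coming from $g_j$ and $b_i$ on the column coming from $h_i$. Its bottom block reads $a_j y^\star_j = 0$ for every $j$, which forces $a_j=0$ at every inactive index (where $y^\star_j\neq 0$); the top block then collapses to $\sum_{j\in\mathcal{I}\NL(z^\star)} a_j\nabla g_j(z^\star)+\sum_i b_i\nabla h_i(z^\star)=0$, so regularity of $z^\star$ for \eqref{eq:original_problem} forces the remaining coefficients to vanish and $[z^\star;y^\star]$ is regular for \eqref{eq:equality_problem}. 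The converse is the same chain read backwards: a dependence among the active $\nabla g_j(z^\star)$ and the $\nabla h_i(z^\star)$, padded with zero coefficients on the inactive slacks (whose $y$-rows then vanish automatically), gives a dependence among the columns of $\nabla p(v^\star)$.

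For parts (ii)--(iii) I would write out the KKT system of the equality-constrained problem \eqref{eq:equality_problem}: with $\mu=[\lambda;\nu]$, stationarity $\nabla J(v)+\nabla p(v)\mu=0$ decomposes into the $z$-block $\nabla J(z)+\nabla g(z)\lambda+\nabla h(z)\nu=0$ and the $y$-block $\diag(y)\lambda=0$, together with $p(v)=0$. Given a KKT triple $(z^\star,\lambda^\star,\nu^\star)$ of \eqref{eq:original_problem} satisfying \eqref{eq:FOC}, take $y^\star$ as above: the $z$-block is literally the stationarity equation of \eqref{eq:FOC}, the $y$-block holds because $y^\star_j\neq 0$ forces $g_j(z^\star)<0$ and hence $\lambda^\star_j=0$ by complementarity, and $p(v^\star)=0$ holds by construction---this proves (ii), and strict complementarity transfers as well since $y^\star_j=0$ exactly on the active set. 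Conversely, from a KKT double of \eqref{eq:equality_problem} with $\lambda^\star\ge 0$: the $z$-block is stationarity for \eqref{eq:original_problem}, primal feasibility $g(z^\star)\le 0$, $h(z^\star)=0$ follows from $p(v^\star)=0$, and complementary slackness follows from $\lambda^\star_j g_j(z^\star)=-\frac12\lambda^\star_j (y^\star_j)^2 = -\frac12 y^\star_j(\lambda^\star_j y^\star_j)=0$ using the $y$-block---proving (iii).

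The main obstacle I anticipate is part (i): one must check carefully that the $\diag(y^\star)$ block of $\nabla p$ annihilates precisely the multipliers of the inactive constraints, so that the rank computation cleanly collapses onto the original LICQ restricted to $\mathcal{I}\NL(z^\star)$, and that the sign ambiguity in the definition of $y^\star$ plays no role. Once that is in place, parts (ii)--(iii) reduce to a routine transcription of the first-order conditions, the only nonobvious point being that the $y$-block $\diag(y)\lambda=0$ is exactly the complementary-slackness condition $\diag(\lambda)g(z)=0$ in disguise.
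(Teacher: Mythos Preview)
Your proposal is correct. The paper does not give its own proof of this lemma; it simply cites \cite[Proposition~1]{tapia1974stable} and \cite[Section~3.3.2]{bertsekas1999nonlinear} and then records the formula $y^\star_j = (-2g_j(z^\star))^{1/2}$ together with the remark that the $y$-block of the first-order conditions already encodes complementary slackness. Your direct verification via the block structure of $\nabla p(v^\star)$ is exactly the standard argument behind those references, and the bookkeeping in part~(i) --- that the $\diag(y^\star)$ block kills precisely the coefficients attached to inactive constraints, collapsing the full-rank condition on $\nabla p(v^\star)$ onto LICQ for $\{\nabla g_j(z^\star)\}_{j\in\mathcal{I}_{\mathrm{NL}}(z^\star)}\cup\{\nabla h_i(z^\star)\}$ --- is carried out correctly in both directions.
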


Note that by the feasibility of the optimal solution to \eqref{eq:equality_problem}, $y^\star$ is such that 
$
y^\star_j = \left( -2 g_j\left(z^\star\right) \right)^{1/2}
$, for all $j \in \{ 1, \ldots, m\}$.

For the proof of part (iii), it is necessary to assume $\lambda^\star \geq 0$; in fact, the first order sufficient conditions for \eqref{eq:equality_problem} may in principle have negative dual variables $\lambda^\star$ associated with the squared-slack equality constraints. Note that the first order conditions, derived with respect to $y$, already guarantee complementarity slackness, i.e., 
$\lambda_j^\star = 0$ for all indexes $j$ of the active inequality constraints at $z^\star$ for \eqref{eq:original_problem}.


The following Lemma shows that the assumption $\lambda^\star \geq 0$ of part (iii) can be dropped if the solution $[z^\star;y^\star]$ is a local minimum.


\begin{lem}
\label{lemma:equivalent_first_order}
If $[z^\star;y^\star]$ is a local minimum of \eqref{eq:equality_problem} with dual variables $[\lambda^\star; \nu^\star]$, then $(z^\star,\lambda^\star,\nu^\star)$ is a KKT triple for \eqref{eq:original_problem}.
\end{lem}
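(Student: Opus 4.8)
The plan is to derive from the first-order optimality conditions of \eqref{eq:equality_problem} all the ingredients of the KKT conditions \eqref{eq:FOC} for $(z^\star,\lambda^\star,\nu^\star)$ \emph{except} the sign condition $\lambda^\star\geq 0$, and then to obtain that remaining sign condition — the single hypothesis used in \Cref{lemma:equivalence_equality}(iii) that is not available here — from the second-order necessary optimality condition at the local minimum $[z^\star;y^\star]$.

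First I would write out stationarity of the Lagrangian $\Lagr_{\textup{eq}}(z,y,\lambda,\nu):=J(z)+\lambda^\top\big(g(z)+\tfrac12\diag(y)\,y\big)+\nu^\top h(z)$ of \eqref{eq:equality_problem}. Differentiating in $z$ gives $\nabla J(z^\star)+\nabla g(z^\star)\lambda^\star+\nabla h(z^\star)\nu^\star=0$; differentiating in $y$ gives $\diag(y^\star)\lambda^\star=0$; and primal feasibility gives $g_j(z^\star)=-\tfrac12 (y^\star_j)^2\leq 0$ for all $j$ and $h(z^\star)=0$. From $y^\star_j\lambda^\star_j=0$ one reads off that $g_j(z^\star)<0$ forces $y^\star_j\neq 0$, hence $\lambda^\star_j=0$; thus complementarity $\lambda^\star_jg_j(z^\star)=0$ holds for every $j$. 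Consequently every condition in \eqref{eq:FOC} is satisfied except possibly $\lambda^\star\geq 0$, and it suffices to prove $\lambda^\star_j\geq 0$ for the active indices, i.e.\ those $j$ with $g_j(z^\star)=0$, equivalently $y^\star_j=0$.

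For such indices I would invoke the second-order necessary condition at the local minimum $v^\star:=[z^\star;y^\star]$ of \eqref{eq:equality_problem}, which is legitimate under the regularity of $v^\star$ (equivalent to regularity of $z^\star$ by \Cref{lemma:equivalence_equality}(i), as assumed throughout the paper): $d^\top\nabla^2_{vv}\Lagr_{\textup{eq}}(v^\star,\lambda^\star,\nu^\star)\,d\geq 0$ for every $d=[d_z;d_y]$ with $\nabla p(v^\star)^\top d=0$. A direct computation shows that $\nabla^2_{vv}\Lagr_{\textup{eq}}$ is block diagonal, the $z$–$y$ cross block vanishing (the slack term $\tfrac12\sum_j\lambda_j y_j^2$ does not involve $z$), with $z$–$z$ block $\nabla^2_{zz}\Lagr(z^\star,\lambda^\star,\nu^\star)$ and $y$–$y$ block $\diag(\lambda^\star)$. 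For an active index $j$ take $d=[0;e_j]$ with $e_j$ the $j$-th unit vector: then $\nabla p(v^\star)^\top d=[\diag(y^\star)e_j;0]=[y^\star_je_j;0]=0$ because $y^\star_j=0$, so $d$ is admissible, and the quadratic form on $d$ equals $e_j^\top\diag(\lambda^\star)e_j=\lambda^\star_j\geq 0$. Together with $\lambda^\star_j=0$ on the inactive indices this yields $\lambda^\star\geq 0$, and \Cref{lemma:equivalence_equality}(iii) then concludes that $(z^\star,\lambda^\star,\nu^\star)$ is a KKT triple for \eqref{eq:original_problem}.

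The point requiring most care is the appeal to the second-order necessary condition, which presupposes a constraint qualification at $v^\star$; I would make this explicit by recording that $z^\star$ is regular and hence so is $v^\star$. The only genuine calculation is the block structure of $\nabla^2_{vv}\Lagr_{\textup{eq}}$, and in particular the identification of its $y$–$y$ block with $\diag(\lambda^\star)$, since the whole argument hinges on testing that Hessian against the pure slack direction $[0;e_j]$.
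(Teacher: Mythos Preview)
Your proposal is correct and follows essentially the same argument as the paper: compute the block-diagonal Hessian of the Lagrangian of \eqref{eq:equality_problem}, test the second-order necessary condition against the pure slack direction $[0;e_j]$ for each active index $j$ (where $y^\star_j=0$ makes this direction tangent), and conclude $\lambda^\star_j\geq 0$, then invoke \Cref{lemma:equivalence_equality}(iii). Your write-up is in fact more complete than the paper's, which omits the explicit derivation of the first-order conditions and the remark on the constraint qualification needed for the second-order necessary condition.
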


\begin{proof}
By the second order necessary condition for \eqref{eq:equality_problem}, we have 
\begin{equation}
\label{eq:lemma_eq_sec_lagr}
\left[ \begin{matrix}
\tilde z^\top & \tilde y^\top 
\end{matrix} \right] 
\left[ \begin{matrix}
\nabla^2_{zz} \, \Lagr(z^\star,\lambda^\star,\nu^\star) & 0 \\ 0 & \diag(\lambda^\star)
\end{matrix} \right]
\left[ \begin{matrix}
\tilde z \\ \tilde y
\end{matrix} \right] 
\geq \, 0
\end{equation}
for all $\tilde z \in \Real^n$, $\tilde y \in \Real^m$ such that:
\begin{equation}
\label{eq:lemma_eq_sec_cond}
\nabla h(z^\star)^\top \tilde z = 0, \qquad \nabla g_j(z^\star)^\top \tilde z + y^\star_j \tilde y_j = 0 \ \textup{ for all } j \in \{1, \ldots, m\}.
\end{equation}

Let $j$ be the index of an arbitrary active constraint of $z^{\star}$. We can choose $\tilde z = 0$, with $\tilde y_j \neq 0$, and $\tilde y_k = 0$ for all $k\neq j$. Therefore, by \eqref{eq:lemma_eq_sec_lagr}, we obtain $\lambda_j^\star \, \tilde y^2_j \geq 0$, thus $\lambda_j^\star \geq 0$. Then the result follows from  \Cref{lemma:equivalent_first_order}, part~(iii).

\end{proof}

\bibliographystyle{siamplain}
\bibliography{bibl}
\end{document}